\documentclass[a4paper]{article}

\usepackage{amsmath, amsfonts, amsthm, mathrsfs, amssymb}
\usepackage{graphicx, subfigure, threeparttable, booktabs, tabularx}
\usepackage{xcolor}                                  
\usepackage{dsfont}
\usepackage{hyperref}
\usepackage{float}
\usepackage{bbm}
\usepackage[a4paper, left=2.5cm,right=2.5cm,top=2.5cm,bottom=3cm]{geometry}

\newtheorem{theorem}{Theorem}\numberwithin{theorem}{section}
\newtheorem{corollary}[theorem]{Corollary}
\newtheorem{lemma}{Lemma}\numberwithin{lemma}{section}
\numberwithin{definition}{section}
\newtheorem{remark}{Remark}\numberwithin{remark}{section}
\newtheorem{example}{Example}\numberwithin{example}{section}
\newtheorem{assumption}{Assumption}\numberwithin{assumption}{section}

\numberwithin{equation}{section}

\def\d{{\rm d}}
\def\<{\langle}
\def\>{\rangle}

\allowdisplaybreaks[4]
\begin{document}

\title{Uniform-in-time error estimates for McKean-Vlasov SDEs with common noise and stochastic algorithms}
%one-dimensional

\author{Yuhang Zhang and Minghui Song*}

\date{}

%==============================================================

\maketitle

\begin{abstract}
In this work, by construct an asymptotic coupling by reflection, we first explore the uniform-in-time estimate on probability distance for two measure-valued processes induced by a McKean-Vlasov SDE with common noise and an interacting particle system, where the drift terms are dissipative merely in the long distance. As direct applications of this estimate, we establish the uniform-in-time error estimates for the numerical solutions derived via backward/tamed/adaptive Euler-Maruyama methods. Moreover, as another direct application, the uniform-in-time conditional propagation of chaos is quantified. 

\noindent{\bf Keywords:} McKean-Vlasov SDEs, common noise, uniform-in-time, Euler-Maruyama method, error estimate
\end{abstract}

\section{Introduction}
\subsection{Background}
A McKean-Vlasov stochastic differential equation (SDE) for a $d$-dimensional process $X$ is an SDE where the coefficients not only depend on the current state $X_t$, but also depend on the law of $X_t$, i.e.,
\begin{align}\label{MV-SDE001}
{\rm d}X_t=b(X_t,\mathcal{L}(X_t)){\rm d}t+\sigma(X_t,\mathcal{L}(X_t)){\rm d}B_t,\quad X_0=\xi,
\end{align}
where $B=(B_t)_{t\ge 0}$ is a standard Brownian motion, $\mathcal{L}(X_t)$ denotes the marginal law or distribution of the process $X$ at time $t\ge 0$ and $\xi$ is an $\mathbb{R}^d$-valued random variable. The work on McKean-Vlasov SDEs was originated in \cite{MR221595} and have been the target of much research. 
The existence and uniqueness of the solution of McKean-Vlasov SDEs in strong and weak sense has been thoroughly investigated in the literature, see for example, \cite{MR3752669, MR3753660, MR3914550, MR4260494,MR4421344, MR1108185} and references therein. So far, McKean-Vlasov SDEs have been investigated considerably on behaviors in a finite-time horizon (e.g. propagation of chaos (PoC for abbreviation, \cite{MR3752669, Chaintron_2022, MR1431299}) and numerical approximations (\cite{MR3871063, MR4860959, MR4367675, MR4568438, MR4293705})), and long-time asymptotics (e.g. ergodicity (\cite{MR3403022, MR4291453, MR3739509, MR4550214}) and long-time numerical analysis (\cite{MR4908055, yuanping2025explicitnumericalapproximationsmckeanvlasov, WOS:000712217500001, liu2025longtimestrongconvergence, MR4815916})).

%介绍根据混沌传播理论，上述MV-SDEs是IPS的宏观极限，进而引入IPS
Based on the theory of PoC, McKean-Vlasov SDE \eqref{MV-SDE001} is the macroscopic limit of the following interacting particle system (IPS) as the number of particles $N$ goes to infinity, 
 \begin{align}\label{IPS001}
{\rm d}X_t^{i,N}=b(X_t^{i,N},\mu_t^N){\rm d}t+\sigma(X_t^{i,N},\mu_t^N){\rm d}B_t^i,\quad X_0^{i,N}=\xi^i,
\end{align}
where $i\in\mathbb{S}_N:=\{1,\cdots, N\}$, $(B^i,\xi^i)_{i\in\mathbb{S}_N}$ are the independent copies of $(B,\xi)$, $\mu_t^N=\frac{1}{N}\sum_{j=1}^N \delta_{X_t^{j,N}}$, and $\delta_x$ denotes the Dirac measure at point $x$.

%介绍不只受独立噪声影响，还会受共同的环境噪声的影响，进而引入带有共同噪声的交互粒子系统
%再根据条件混沌传播理论引出带有共同噪声的MVSDEs
Note that in some circumstances, the particles in a mean-field model may be influenced not only by their own idiosyncratic noise but also by random shocks that affect all particles simultaneously. In such scenarios, the particle dynamics can no longer be represented by the classical mean-field system, and instead be described by a mean-field system with common noise:
 \begin{align}\label{IPS002}
{\rm d}X_t^{i,N}=b(X_t^{i,N},\mu_t^N){\rm d}t+\sigma(X_t^{i,N},\mu_t^N){\rm d}B_t^i+\overline\sigma(X_t^{i,N},\mu_t^N){\rm d}W_t,\quad X_0^{i,N}=\xi^i,
\end{align}
where $(B^i,\xi^i)_{i\in\mathbb{S}_N}$ are defined exactly as in \eqref{IPS001}, $W=(W_t)_{\ge 0}$ is also a Brownian motion, independent of $B^1,\cdots, B^N$. In \eqref{IPS002}, $(B^i)_{i\in\mathbb{S}_N}$ are also called the idiosyncratic noise (or individual noise) as in \eqref{IPS001}, and $W$ is named as a common noise capturing environmental randomness shared across the particle system. In this setting, all particles are not asymptotically independent any more and the empirical measure no longer converges to a deterministic distribution as the number of particles tends to infinity. However, under appropriate conditions, the phenomenon on conditional PoC (see e.g. \cite[Theorem 2.12]{MR3753660}) indicates that, conditioned on the $\sigma$-algebra generated by the common noise, the particles become asymptotically independent and their empirical distribution converges to the common conditional distribution of each particle. And the limiting dynamics can be characterized by the following McKean-Vlasov SDE with common noise:
\begin{align}\label{CMV-SDE001}
{\rm d}X_t=b(X_t,\mu_t){\rm d}t+\sigma(X_t,\mu_t){\rm d}B_t+\overline\sigma(X_t,\mu_t){\rm d}W_t,\quad X_0=\xi,
\end{align}
where $\mu_t:=\mathcal{L}(X_t|\mathcal{F}_t^{W})$ denotes the conditional distribution of $X_t$ given the $\sigma$-algebra $\mathcal{F}_t^{W}$, $B$ and $W$ are mutually independent Brownian motions. Such equations are also referred to as conditional McKean-Vlasov SDEs  and have been widely applied in stochastic optimal control and mean-field game theory.

%介绍关于带有共同噪声的MVSDEs的现有工作，引出数值方法，进而引出本文的研究内容
In recent years, there are some progresses on qualitative and quantitative analyses for McKean-Vlasov SDEs with common noise—such as well-posedness (\cite{MR4583663, MR4255126, MR4525160}), conditional PoC (\cite{MR4925916, MR3513594, MR4814027, liu2025adaptiveemschemesmckeanvlasov, MR4525160, MR4690564, MR4491513}) and numerical approximations (\cite{MR4728171, MR4497846}), et al. Nevertheless, compared with classical McKean-Vlasov SDEs, the literature on models incorporating common noise remains relatively limited, and most existing studies focus on finite-time behavior, asymptotic analysis in the infinite-time setting remains largely unexplored. Regarding the long-time dynamics of the exact solution, \cite{maillet2025notelongtimebehaviourstochastic} investigated the exponential ergodicity of the measure-valued process $(\mu_t)_{t>0}$ associated with the conditional McKean-Vlasov SDE on $\mathbb{R}$:
\begin{align*}
{\rm d}X_t=-\left(V'(X_t)+\int_{\mathbb{R}}W'(X_t-y)\mu_t({\rm d}y)\right){\rm d}t+\sigma{\rm d}B_t+\sigma_0{\rm d}W_t,
\end{align*}
where $V'$ and $W'$ are set to be globally Lipschitz continuous. Inspired by \cite{maillet2025notelongtimebehaviourstochastic}, \cite{MR4925916} handled the exponential ergodicity of the conditional McKean-Vlasov SDE on $\mathbb{R}$:
\begin{align*}
{\rm d}X_t=b(X_t,\mu_t){\rm d}t+\sigma(X_t){\rm d}B_t+\sigma_0{\rm d}W_t,
\end{align*}
in which the drift $b$ is allowed to be much more general and of polynomial growth, the idiosyncratic noise is permited to be of multiplicative type. While for long-time behavior of numerical solutions, to the best of our knowledge, \cite{liu2025adaptiveemschemesmckeanvlasov} is the only work that establishes the strong convergence of the numerical algorithm (the adaptive Euler-Maruyama method) in infinite horizon. In this work, we will establish the long-time boundedness and convergence of the numerical solutions derived by backward/tamed/adaptive Euler-Maruyama methods, respectively. The purpose and framework of this study are presented in the following subsection.

\subsection{Framework and motivation}
In this paper, to distinguish two sources of randomness, we introduce the following two probability spaces. Let $(\Omega^B,\mathcal{F}^B,\mathbb{P}^B)$ and $(\Omega^W,\mathcal{F}^W,\mathbb{P}^W)$ be two complete probability space with filtrations $\left(\mathcal{F}_t^B\right)_{t\ge 0}$ and $\left(\mathcal{F}_t^W\right)_{t\ge 0}$ satisfying the usual conditions, respectively. Define
\begin{align*}
\Omega=\Omega^B\times\Omega^W, ~\mathbb{P}= \mathbb{P}^B\times\mathbb{P}^W,
\end{align*}
and $\mathcal{F}$ stands for the completion of $ \mathcal{F}^B\times\mathcal{F}^W$, $\mathcal{F}_t$ stands for the completion of $ \mathcal{F}_t^B\times\mathcal{F}_t^W$ for $t\ge 0$. Element $\omega=(\omega^B,\omega^W)$ denotes a generic element in $\Omega$ with $\omega^B\in\Omega^B$ and $\omega^W\in\Omega^W$. $\mathbb{E}$, $\mathbb{E}^B$ and $\mathbb{E}^W$ stands for taking expectation with respect to $\mathbb{P}$, $\mathbb{P}^B$, and $\mathbb{P}^W$ respectively. Given a random variable $X$ on $\Omega$, $\mathcal{L}(X)$ denotes the law of $X$, $\mathcal{L}(X\vert \mathcal{F}^W)$ denotes the conditional law of $X$ given $\mathcal{F}^W$. 

In this work, we intend to extend the framework proposed for McKean-Vlasov SDEs in \cite{MR4908055} to McKean-Vlasov SDEs with common noise. For $i\in\mathbb{S}_N$, let \( B^i = (B_t^i)_{t \ge 0} \) and \( \overline{B}^i = (\overline{B}_t^i)_{t \ge 0} \) be mutually independent Brownian motions of dimensions \( d \) and \( m_1 \), respectively, defined on the filtered probability space $(\Omega^B,\mathcal{F}^B, (\mathcal{F}_t^B)_{t\ge 0}, \mathbb{P}^B)$. Let $W:=(W_t)_{t\ge 0}$ and $\overline{W}:=(\overline{W}_t)_{t\ge 0}$ be mutually independent Brownian motions of dimensions \( d \) and \( m_2 \), respectively, defined on $(\Omega^W,\mathcal{F}^W, (\mathcal{F}_t^W)_{t\ge 0}, \mathbb{P}^W)$. $(B^i)_{i\in\mathbb{S}_N}$, $(\overline B^i)_{i\in\mathbb{S}_N}$, $W$ and $\overline{W}$ are mutually independent. Consider the following stochastic particle systems: for $i\in\mathbb{S}_N$, $t\ge 0$,
\begin{align}\label{NIPS}
{\rm d}X_t^i=b(X_t^i,\mu_t^i){\rm d}t+\sigma_0{\rm d}B_t^i+\sigma(X_t^i,\mu_t^i){\rm d}\overline B_t^i+\sigma_1{\rm d}W_t+\overline\sigma(X_t^i,\mu_t^i){\rm d}\overline{W}_t
\end{align}
with the initial data $X_0^i$, and
\begin{align}\label{IPS}
{\rm d}X_t^{i,N}=\widetilde{b}(X_{\theta_t}^{i,N},\widetilde{\mu}_{\overline{\theta}_t}^N){\rm d}t+\sigma_0{\rm d}B_t^i+\sigma(X_{\overline{\theta}_t}^{i,N},\widetilde{\mu}_{\overline{\theta}_t}^N){\rm d}\overline B_t^i+\sigma_1{\rm d}W_t+\overline\sigma(X_{\overline{\theta}_t}^{i,N},\widetilde{\mu}_{\overline{\theta}_t}^N){\rm d}\overline{W}_t
\end{align}
with the initial data $X_0^{i,N}$, we aim to establish the uniform-in-time quantitative estimate on probability distance for two measure-valued processes $\mu_t^i$ and $\nu_t^{i,N}$:
\begin{align}\label{main aim}
\mathcal{W}_1(\mathcal{L}(\mu_t^i),\mathcal{L}(\nu_t^{i,N}))\le \varphi(t,N),~t\ge 0,~i\in\mathbb{S}_N,
\end{align}
where $\mu_t^i=\mathcal{L}(X_t^i\vert \mathcal{F}_t^W)$, $\nu_t^{i,N}=\mathcal{L}(X_t^{i,N}\vert \mathcal{F}_t^W)$, $\varphi:[0,\infty)\times \mathbb{N}_+\to (0,\infty)$. In \eqref{NIPS} and \eqref{IPS}, $b, \widetilde{b}:\mathbb{R}^d\times\mathcal{P}(\mathbb{R}^d)\to \mathbb{R}^d$, $\sigma:\mathbb{R}^d\times\mathcal{P}(\mathbb{R}^d)\to \mathbb{R}^{d\times m_1}$, $\overline\sigma:\mathbb{R}^d\times\mathcal{P}(\mathbb{R}^d)\to \mathbb{R}^{d\times m_2}$, $\sigma_0, \sigma_1\in \mathbb{R}$, $\theta, \overline\theta: [0,\infty)\to [0,\infty)$. 
 $\widetilde{\mu}_t^N=\frac{1}{N}\sum_{j=1}^N\delta_{X_t^{j,N}}$. $(B^i)_{i\in\mathbb{S}_N}$ and $(\overline B^i)_{i\in\mathbb{S}_N}$  are referred to as idiosyncratic noise, $W$ and $\overline{W}$ are referred to as common noise.

As described in \cite{MR4908055}, if we focus on the framework \eqref{NIPS} and \eqref{IPS}, and explore the uniform-in-time estimate \eqref{main aim}, we can get the following result as the immediate by-products of the quantitative estimate.
\begin{itemize}
\item[(1)]{\textit{Uniform-in-time conditional propagation of chaos}}

In \eqref{IPS}, once we take $\widetilde{b}(x,\mu)=b(x,\mu)$ and $\theta_t=\overline\theta_t=t$, \eqref{IPS} subsequently becomes
\begin{align}\label{IPS-22}
{\rm d}X_t^{i,N}=b(X_{t}^{i,N},\widetilde{\mu}_{t}^N){\rm d}t+\sigma_0{\rm d}B_t^i+\sigma(X_{t}^{i,N},\widetilde{\mu}_{t}^N){\rm d}\overline B_t^i+\sigma_1{\rm d}W_t+\overline\sigma(X_{t}^{i,N},\widetilde{\mu}_{t}^N){\rm d}\overline{W}_t, ~\forall i\in\mathbb{S}_N.
\end{align}
Hence, \eqref{NIPS} and \eqref{IPS-22} are the respective non-interacting particle system and interacting
particle system corresponding to the following McKean-Vlasov SDE with common noise:
\begin{align*}
{\rm d}X_t=b(X_t,\mu_t){\rm d}t+\sigma_0{\rm d}B_t+\sigma(X_t,\mu_t){\rm d}\overline B_t+\sigma_1{\rm d}W_t+\overline\sigma(X_t,\mu_t){\rm d}\overline{W}_t,
\end{align*}
where $\mu_t=\mathcal{L}(X_t\vert \mathcal{F}_t^W)$, $(B_t)_{t\ge 0}$(a copy of $(B_t^i)_{t\ge 0}$ for $i\in\mathbb{S}_N$) and $(\overline B_t)_{t\ge 0}$(a copy of $(\overline B_t^i)_{t\ge 0}$ for $i\in\mathbb{S}_N$) are mutually independent Brownian motions, and are all independent of the Brownian motions $W$ and $\overline{W}$. PoC is a hot topic in the McKean-Vlasov frame, the convergence is typically discussed in the sense of Wasserstein distance, total variation, or relative entropy. We only introduce and explore the results in Wasserstein metric in this paper. The theory on conditional PoC in a finite time horizon has been investigated in many recent works, for instance, \cite{MR3513594, MR4814027, MR4525160, MR4690564, MR4491513}. In \cite{MR3513594}, the authors proved conditional PoC when $\sigma_0=\sigma= 0$, $b(x,\mu)=\mu(f(x-\cdot))$ for some Lipschitz function $f$. \cite{MR4814027} established the quantitative conditional PoC for the McKean-Vlasov SDEs with additive common noise. In \cite{MR4491513}, the quantitative conditional PoC is studied for stochastic spatial epidemic model, where the evolution of infection states are driven by the Poisson point processes and the displacement of individuals contains a common noise. In addition, there are also some progresses on the issue of uniform-in-time conditional PoC, see, for example, \cite{MR4925916, liu2025adaptiveemschemesmckeanvlasov}. As an immediate by-product of the quantitative estimate \eqref{main aim}, the uniform-in-time conditional PoC will be reproduced right away, where the underlying drift is dissipative merely in the long distance.

%\cite{MR4235471} investigated conditional PoC for one dimensional SDEs driven by Poisson random measure and common Brownian motion noise. %无穷时间，没有阶
%进一步解释

\item[(2)]{\textit{Uniform-in-time error estimates for stochastic algorithms}}

As another direct application of the estimate \eqref{main aim}, the uniform-in-time error estimates of numerical solutions for the McKean-Vlasov SDE \eqref{NIPS} will be established. As is known to all, the Euler-Maruyama (EM for brevity) scheme works merely for SDEs with coefficients of linear growth. In this work, we deal with equations where the drift term exhibits dissipative property with respect to the state component and may grow superlinearly. Hence, 
several variants of the EM scheme are employed, all of which can be encompassed by \eqref{IPS}.
\begin{itemize}
\item[(i)] The backward EM scheme related to the McKean-Vlasov SDE \eqref{NIPS}: for a step size $h>0$,
\begin{align*}
{\rm d}X_t^{i,N}=b(X_{t_h+h}^{i,N},\widetilde{\mu}_{t_h}^{N}){\rm d}t+\sigma_0{\rm d}B_t^i+\sigma(X_{t_h}^{i,N},\widetilde{\mu}_{t_h}^{N}){\rm d}{\overline B}_t^i+\sigma_1{\rm d}W_t+\overline\sigma(X_{t_h}^{i,N},\widetilde{\mu}_{t_h}^{N}){\rm d}\overline{W}_t,
\end{align*}
is included in \eqref{IPS} by taking $\widetilde{b}(x,\mu)=b(x,\mu)$, $\theta_t=t_h+h, \overline\theta_t=t_h$, in which $t_h=\lfloor t/h \rfloor h$ with $\lfloor t/h \rfloor$ being the integer part of $t/h$.
\item[(ii)] The tamed EM method, constructed in \cite{MR2985171,MR3070913}, can also efficiently approximate SDEs with coefficients exhibiting superlinear growth in a finite time horizon. But, for the step size $h>0$, since the modified drifts proposed in \cite{MR2985171,MR3070913} are uniformly bounded, the schemes are not adequate to approximate the exact solution in an infinite-time horizon. To derive a uniform-in-time estimate between the distributions of the exact solution and the numerical counterpart, we adopt the following tamed EM scheme to solve the McKean-Vlasov SDE \eqref{NIPS}:
  \begin{align*}%\label{BEM}
{\rm d}X_t^{i,N}=\frac{b(X_{t_h}^{i,N},\widetilde{\mu}_{t_h}^{N})}{1+\alpha\sqrt{h}|X_{t_h}^{i,N}|^l}{\rm d}t+\sigma_0{\rm d}B_t^i+\sigma(X_{t_h}^{i,N},\widetilde{\mu}_{t_h}^{N}){\rm d}{\overline B}_t^i+\sigma_1{\rm d}W_t+\overline\sigma(X_{t_h}^{i,N},\widetilde{\mu}_{t_h}^{N}){\rm d}\overline{W}_t,
\end{align*}
in which $\alpha\in [0,1]$, the drift term $b$ satisfies Assumption \ref{S1} below, $l$ is also given in Assumption \ref{S1}. This scheme can also be incorporated into \eqref{IPS} by taking $\widetilde{b}(x,\mu)=\frac{b(x,\mu)}{1+\alpha\sqrt{h}|x|^l}$, $\theta_t=\overline\theta_t=t_h$. Moreover, if $l=0$, i.e., the drift coefficient $b$ is of linear growth, then we can take $\alpha=0$, and the scheme above will degenerate to the standard EM method.
\item[(iii)] The modified EM scheme with an adaptive step size is also considered. In the spirit of \cite{MR3828142}, a continuous adaptive EM scheme associated with \eqref{NIPS} can be described as below:
\begin{align*}
{\rm d}X_t^{i,N}=b(X_{\underline t}^{i,N},\widetilde{\mu}_{\underline t}^{N}){\rm d}t+\sigma_0{\rm d}B_t^i+\sigma(X_{\underline t}^{i,N},\widetilde{\mu}_{\underline t}^{N}){\rm d}\overline B_t^i+\sigma_1{\rm d}W_t+\overline\sigma(X_{\underline t}^{i,N},\widetilde{\mu}_{\underline t}^{N}){\rm d}\overline{W}_t,
\end{align*}
where $\underline t=\max\{t_n, n\in\mathbb{N}:t_n\le t\}$, $t_{n+1}=t_n+h_n$ with $h_n$ is an adaptive time step-size (see section \ref{Adaptive EM method} for more details). Then \eqref{IPS} can cover this scheme once we set $\widetilde{b}(x,\mu)=b(x,\mu)$ and $\theta_t=\overline \theta_t=\underline t$.
\end{itemize}
\end{itemize}
 
 {\bf{\textit{Our contributions.}}}
 \begin{itemize}
\item In this work, we extend the framework proposed for standard McKean-Vlasov SDEs in \cite{MR4908055} to McKean-Vlasov SDEs with common noise. Different from the McKean-Vlasov frame, the conditional distribution with respect to the common noise is a function of common noise so that we have to overcome difficulties produced by this crucial difference. For instance, in the procedure of constructing coupling processes, the conditional distribution with respect to the common noise is typically treated as a known function of the common noise so that the common noise need also be fixed. Hence, only new idiosyncratic noise can be constructed in coupling process. Moreover, since the idiosyncratic noise and the common noise are independent, when we calculate the expectation, we should firstly take conditional expectation with respect to the common noise in which the common noise can be viewed as a constant and then use the tower property of conditional expectation to realize this goal.
\item We establish the long-time boundedness and convergence of the numerical solutions derived by backward/tamed/adaptive Euler-Maruyama methods, respectively. If both measure-valued processes associated with the conditional McKean-Vlasov SDEs and its numerical methods possess unique invariant measures, the convergence results reveal the quantitative estimate between the invariant probability measure and its numerical version for the McKean-Vlasov SDEs with common noise with partially dissipative drifts. Moreover, \cite{MR4908055} only established uniform error estimates of numerical algorithms for McKean-Vlasov SDEs with additive noise. The model in the present paper is more general than that in \cite{MR4908055}, even in the absence of common noise.
\end{itemize}

{\bf{\textit{Organization.}}}
The rest of the paper is organized as follows. The last part of the current section is a summary of the notation used throughout the paper. In section \ref{Main estimate}, we focus on the framework \eqref{NIPS} and \eqref{IPS}, and explore the uniform-in-time estimate \eqref{main aim} via constructing an asymptotic coupling by reflection. In section \ref{stochastic algorithms}, we establish the uniform-in-time error bounds for the numerical solutions derived by backward/tamed/adaptive Euler-Maruyama methods, respectively. Finally, we numerical test long-time errors between a McKean-Vlasov SDEs with the drift term is dissipative merely in the long distance and its corresponsing interacting particle systems in Section \ref{Numerical example}.

\subsection{Notations}

Let $\mathbb{N}_+=\{1, 2,\cdots\}$ be the set of natural numbers starting at $1$. For $N\in \mathbb{N}_+$, let $\mathbb{S}_N:=\{1, \cdots, N\}$.
Let $\langle x, y\rangle$ denote the inner product of vectors $x, y$.
If $A$ is a vector or matrix, let $A^{\rm T}$ denote its transpose, $\vert A\vert$ denote its Hilbert-Schmidt norm, i.e., $\vert A\vert = \sqrt{{\rm trace}(A^{\rm T} A)}$. For $a,b\in\mathbb{R}$, set $a\vee b=\max\{a,b\}$, $a\wedge b=\min\{a,b\}$.  

Given the metric space $(\mathbb{R}^d, \vert \cdot\vert )$, we use $\mathcal{P}(\mathbb{R}^d)$ to denote the family of all probability measures on $(\mathbb{R}^d, \mathcal{B}(\mathbb{R}^d))$, where $\mathcal{B}(\mathbb{R}^d)$ denotes the Borel $\sigma$-field over $\mathbb{R}^d$, and define the subset of probability measures with finite $p$-th moment by 
\begin{align*}
\mathcal{P}_p(\mathbb{R}^d):=\left\{\mu\in\mathcal{P}(\mathbb{R}^d)\Big\vert \mu(|\cdot|^p):=\int_{\mathbb{R}^d} \vert x\vert^p\mu(dx)<\infty\right\}.
\end{align*}
It is well-known that $\mathcal{P}_p(\mathbb{R}^d), p\ge1$ is a Polish space under the $L^p$-Wasserstein metric given by
\begin{align*}
\mathbb{W}_p(\mu,\nu):=\inf_{\pi\in\mathcal{C}(\mu,\nu)}\left(\int_{\mathbb{R}^d\times \mathbb{R}^d}  \vert x-y\vert^p\pi(dx,dy)\right)^{1/p},~\mu, \nu \in\mathcal{P}_p(\mathbb{R}^d),
\end{align*}
where $\mathcal{C}(\mu,\nu)$ means all the couplings of $\mu$ and $\nu$. 
Moreover, since we consider the conditional McKean-Vlasov SDEs in this work, we further introduce following notations. Let $p\ge 1$, 
\begin{align*}
L_p(\mathcal{P}(\mathbb{R}^d)):=\left\{\nu\in\mathcal{P}(\mathcal{P}(\mathbb{R}^d))\Big\vert \int_{\mathbb{R}^d} \mu(|\cdot|^p)\nu(d\mu)<\infty\right\},
\end{align*}
and
\begin{align*}
\mathcal{W}_p(\mu,\nu):=\inf_{\pi\in\mathcal{C}(\mu,\nu)}\int_{\mathcal{P}(\mathbb{R}^d)\times \mathcal{P}(\mathbb{R}^d)}  \mathbb{W}_p(\overline\mu,\overline\nu)\pi(d\overline\mu,d\overline\nu), ~\mu, \nu \in L_p(\mathcal{P}(\mathbb{R}^d)).
\end{align*}

\section{Main Estimate}\label{Main estimate}
In the present work, for the sake of simplicity, we assume that the initial data $(X_0^i, X_0^{i,N})_{i\in\mathbb{S}_N}$ are supported by $(\Omega^B,\mathcal{F}^B, (\mathcal{F}_t^B)_{t\ge 0}, \mathbb{P}^B)$.
%$X_0^{i,N}:=(X_t^{i,N})_{t\in[-r_0,0]}$ for some $r_0\ge 0$, where, $X_0^{i,N}$ is a $\mathcal{C}:=C([-r_0,0],\mathbb{R}^d)$-valued random variables. 

\begin{assumption}\label{A1}
$b(\cdot, \delta_0):\mathbb{R}^d\to\mathbb{R}^d$ is local Lipschitz continuous, and there exist constants $\lambda_1>0$ and $l_0\ge 1$ such that for all $x,y\in\mathbb{R}^d$ and $\mu\in\mathcal{P}_1(\mathbb{R}^d)$,
\begin{align*}
\<x-y, b(x,\mu)-b(y,\mu)\>\le \vert x-y\vert\phi(|x-y|)\mathbbm{1}_{\{|x-y|\le l_0\}}-\lambda_1|x-y|^2\mathbbm{1}_{\{|x-y|> l_0\}},
\end{align*}
where $\phi:[0,\infty)\to [0,\infty)$ with $\phi(0)=0$ is increasing and continuous. Moreover, there exists a constant $\lambda_2>0$ such that for all $x\in\mathbb{R}^d$ and $\mu,\nu\in\mathcal{P}_1(\mathbb{R}^d)$,
\begin{align*}
|b(x,\mu)-b(x,\nu)|\le \lambda_2\mathbb{W}_1(\mu,\nu).
\end{align*}
\end{assumption}

\begin{assumption}\label{A2}
There exists a constant $L>0$ such that for all $x,y\in\mathbb{R}^d$ and $\mu,\nu\in\mathcal{P}_1(\mathbb{R}^d)$,
\begin{align*}
|\sigma(x,\mu)-\sigma(y,\nu)|^2\vee |\overline\sigma(x,\mu)-\overline\sigma(y,\nu)|^2\le L(|x-y|^2+\mathbb{W}_1^2(\mu,\nu)).
\end{align*}
\end{assumption}

\begin{remark}\label{Remark-1.1}
Based on Assumptions \ref{A1} and \ref{A2}, it follows that for all $x\in\mathbb{R}^d$ and $\mu\in\mathcal{P}_1(\mathbb{R}^d)$, 
 \begin{align}\label{B1}
 \<x,b(x,\mu)\>&\le \vert x\vert\phi(|x|)\mathbbm{1}_{\{|x|\le l_0\}}-\lambda_1|x|^2\mathbbm{1}_{\{|x|> l_0\}}+\lambda_2|x|\mathbb{W}_1(\mu,\delta_0)+|x||b(0,\delta_0)|\notag\\
 &\le -K_1|x|^2+\frac{\lambda_2}{2}\mathbb{W}_1^2(\mu,\delta_0)+K_2,
 \end{align}
where $K_1=\frac{2\lambda_1-3\lambda_2}{2}$, $K_2=l_0\phi(l_0)+\lambda_1 l_0^2+\frac{|b(0,\delta_0)|^2}{4\lambda_2}$, and 
 \begin{align}
 |\sigma(x,\mu)|^2&\le 2L(|x|^2+\mathbb{W}_1^2(\mu,\delta_0))+2|\sigma(0,\delta_0)|^2,\label{B2}\\
|\overline\sigma(x,\mu)|^2&\le 2L(|x|^2+\mathbb{W}_1^2(\mu,\delta_0))+2|\overline\sigma(0,\delta_0)|^2.\label{B3}
 \end{align}
 \end{remark}
%\begin{assumption}\label{A3}
%There exists a constant $\overline L>0$ such that
%\begin{align*}
%|b(0,\delta_0)|\vee |\sigma(0,\delta_0)|\vee |\overline\sigma(0,\delta_0)|\le \overline L.
%\end{align*}
%\end{assumption}

Under Assumptions \ref{A1} and \ref{A2}, there exists a unique strong solution of \eqref{NIPS}, and the uniform-in-time estimate on probability distance for two measure-valued processes induced by the McKean-Vlasov SDE with common noise \eqref{NIPS} and the interacting particle system \eqref{IPS} can be portrayed precisely stated as the following theorem.
\begin{theorem}\label{main theorem}
Suppose that Assumptions \ref{A1} and \ref{A2} hold with $\lambda_1>2\lambda_2$ and that $\sigma_0\sigma_1\neq 0$. Then there exist constants $\lambda_2^*,L^*,C,\lambda_0>0$ such that for any $\lambda_2\in [0,\lambda_2^*), L\in[0,L^*]$,
\begin{align*}
\mathcal{W}_1\big(\mathcal{L}(\mu_t^i),\mathcal{L}(\nu_t^{i,N})\big)\le C\left({\rm e}^{-\lambda_0t}\mathbb{W}_1(\mu,\nu)+\psi(N)+\int_0^t{\rm e}^{-\lambda_0(t-s)}\mathbb{E}\vert b(X_s^{i,N},\widetilde{\mu}_s^N)-\widetilde{b}(X_{\theta_s}^{i,N},\widetilde{\mu}_{\overline{\theta}_s}^N)\vert{\rm d}s\right),
\end{align*}
in the case of $\mu\in\mathcal{P}_q(\mathbb{R}^d)$ for some $q>1$ and $\nu\in\mathcal{P}_1(\mathbb{R}^d)$, where $t>0, i\in\mathbb{S}_N$, $\mu_t^i=\mathcal{L}(X_t^i\vert \mathcal{F}_t^W)$ and $\nu_t^{i,N}=\mathcal{L}(X_t^{i,N}\vert \mathcal{F}_t^W)$ stand for the regular conditional distributions of $X_t^i$ and $X_t^{i,N}$ with initial distributions $\mathcal{L}(X_0^i)=\mu$ and $\mathcal{L}(X_0^{i,N})=\nu$, respectively,
\begin{align*}
\psi(N)=\begin{cases}
      N^{-1/2}+N^{-(q-1)/q}, &{\rm if}~~d<2~~{\rm and}~~q\neq 2,\\
       N^{-1/2}{\rm log}(1+N)+N^{-(q-1)/q}, &{\rm if}~~d=2~~{\rm and}~~q\neq 2\\
       N^{-1/d}+N^{-(q-1)/q}, &{\rm if}~~d>2~~{\rm and}~~q\neq d/(d-1).
\end{cases}
\end{align*}

% $\nu_0({\rm d}x):=\nu(\{\eta\in\mathcal{C}\}:\eta_0\in{\rm d}x)$.
\end{theorem}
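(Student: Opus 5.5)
The proof will follow the reflection-coupling strategy of \cite{MR4908055}, adapted to the conditional setting. Since both $\mu_t^i$ and $\nu_t^{i,N}$ are measurable functions of the common noise $(W,\overline W)$, we keep $W,\overline W$ and the multiplicative idiosyncratic noise $\overline B^i$ synchronous in both systems. Only the additive idiosyncratic noise $\sigma_0{\rm d}B_t^i$ is coupled, and this is where $\sigma_0\neq 0$ enters.

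\textbf{Step 1: the coupling.} We construct, on an enlarged space, a copy $\widehat X^{i,N}$ of \eqref{IPS}. It is driven by the same $\overline B^i,W,\overline W$, and by $\widehat B^i_t=\int_0^t (I-2\Pi_s\mathbbm{1}_{\{|Z_s^i|\ge\varepsilon\}}){\rm d}B_s^i$, where $Z^i=X^i-\widehat X^{i,N}$ and $\Pi_s=e_se_s^{\rm T}$ with $e_s=Z^i_s/|Z^i_s|$. This is an asymptotic reflection coupling with cutoff $\varepsilon$, which is sent to $0$ at the end.

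Conditionally on $\mathcal F^W$, $\widehat B^i$ is again a Brownian motion independent of $(W,\overline W)$. Hence $\mathcal{L}(\widehat X_t^{i,N}\vert\mathcal F^W_t)=\nu_t^{i,N}$. The pair $\big(\mathcal L(X_t^i\vert\mathcal F^W_t),\mathcal L(\widehat X^{i,N}_t\vert\mathcal F^W_t)\big)$ is then a coupling of $\mathcal L(\mu_t^i)$ and $\mathcal L(\nu^{i,N}_t)$, and by conditional Kantorovich duality
\[
\mathcal W_1\big(\mathcal L(\mu_t^i),\mathcal L(\nu_t^{i,N})\big)\le \mathbb E\,\mathbb W_1(\mu_t^i,\nu_t^{i,N})\le \mathbb E|Z_t^i|\le C\,\mathbb E f(|Z_t^i|).
\]
Here $f$ is the concave Eberle-type function built from $\phi,l_0,\lambda_1$ and $\sigma_0$. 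It is comparable to the identity and satisfies $2\sigma_0^2f''(r)+\phi(r)f'(r)\le -c f(r)$ for $r\le l_0$, while $f$ is linear beyond a larger radius $R$.

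\textbf{Step 2: Itô's formula for $f(|Z^i_t|)$.} We apply Itô's formula to $e^{\lambda_0 t}f(|Z^i_t|)$ and take expectations. We first condition on $\mathcal F^W$, treating the common-noise paths as fixed, and then use the tower property. This yields the following drift terms.

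\begin{itemize}
\item[(a)] The term $\langle e_t, b(X_t^i,\mu_t^i)-b(\widehat X_t^{i,N},\mu_t^i)\rangle$. This is handled by Assumption \ref{A1}: contraction comes from the reflected noise on short distances and from $-\lambda_1$ on long distances.
\item[(b)] The measure error $\lambda_2\mathbb W_1(\mu_t^i,\widehat\mu_t^N)$, where $\widehat\mu^N_t$ is the empirical measure of the coupled IPS. We split it as
\[
\mathbb W_1(\mu^i_t,\widehat\mu^N_t)\le \mathbb W_1\Big(\mu^i_t,\tfrac1N\textstyle\sum_j\delta_{X^j_t}\Big)+\tfrac1N\textstyle\sum_j|Z^j_t|.
\]
By exchangeability the second piece averages to $\mathbb E|Z^i_t|$. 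Conditionally on $\mathcal F^W$, the $X^j_t$ are i.i.d.\ with law $\mu_t^i$, so the first piece is bounded by the Fournier--Guillin rate, giving $\psi(N)$. This requires a uniform-in-time $q$-th moment bound for $X^i_t$, which follows from \eqref{B1}--\eqref{B3} when $\lambda_1>2\lambda_2$ and $L$ is small.
\item[(c)] The discretisation term $b(\widehat X^{i,N}_t,\widehat\mu^N_t)-\widetilde b(\widehat X^{i,N}_{\theta_t},\widehat\mu^N_{\overline\theta_t})$, whose expectation is exactly the integrand in the statement.
\item[(d)] The Itô corrections from the synchronous multiplicative noises $\sigma,\overline\sigma$. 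Because $f''\le0$, only the $f'$-weighted and trace terms of size $\le C L(|Z|^2+\mathbb W_1^2)/|Z|$ survive. By Assumption \ref{A2} these are $\le CL(|Z|+\mathbb W_1)$, with $\mathbb W_1$ the same measure error as in (b).
\end{itemize}

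\textbf{Step 3: closing the estimate.} Collecting the terms gives
\[
\frac{{\rm d}}{{\rm d}t}\mathbb E f(|Z_t^i|)\le -\big(c-C(\lambda_2+L)\big)\mathbb E f(|Z_t^i|)+C\psi(N)+C\,\mathbb E|b-\widetilde b|+o_\varepsilon(1).
\]
Choosing $\lambda_2^*,L^*$ small makes the rate $\lambda_0>0$. Gronwall's inequality, the choice of an optimal initial coupling (giving $\mathbb W_1(\mu,\nu)$), and the limit $\varepsilon\to0$ then yield the claim.

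\textbf{Main obstacle.} The main obstacle is making the contraction constant of $f$ survive the multiplicative noise. The diffusion in direction $e_t$ is $\sigma_0$ plus a state-dependent part. The curvature gain from reflection must dominate the $L$-sized corrections uniformly in the common noise path. I would first try absorbing them via $|f'|\le1$ and $L\le L^*$. If that fails, I would enlarge $\sigma_0^2$ in the construction of $f$ to $\sigma_0^2-CL$.

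The role of $\sigma_1\neq0$ I expect to be technical only: it ensures nondegeneracy of the common-noise factor used for regular conditional laws and moment estimates.
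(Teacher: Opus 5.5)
\textbf{Gap in step (d).} This step breaks the differential inequality of Step 3. Write $\Delta\sigma_t:=\sigma(X_t^i,\mu_t^i)-\sigma(\widehat X^{i,N}_{\overline\theta_t},\widehat\mu^N_{\overline\theta_t})$, define $\Delta\overline\sigma_t$ analogously, and set $e_t=Z_t^i/|Z_t^i|$. The It\^o correction you keep in the drift of $f(|Z_t^i|)$ is
\[
\frac{f'(|Z_t^i|)}{2|Z_t^i|}\Big(|\Delta\sigma_t|^2-|e_t^{\rm T}\Delta\sigma_t|^2+|\Delta\overline\sigma_t|^2-|e_t^{\rm T}\Delta\overline\sigma_t|^2\Big)\ \ge 0 ,
\]
which is nonzero in general when $d\ge 2$.

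\begin{itemize}
\item For $\overline\theta_t=t$, Assumption \ref{A2} bounds it by $CL|Z_t^i|+CL\,\mathbb W_1^2/|Z_t^i|$. But $\mathbb W_1^2/|Z|\le\mathbb W_1$ holds only when $|Z|\ge\mathbb W_1$. Your bound $CL(|Z|+\mathbb W_1)$ is therefore false near the diagonal.
\item Smallness of $L$ or a retuned $f$ cannot absorb this. Because $\sigma,\overline\sigma$ depend on the measure, $\Delta\sigma_t$ does not vanish on $\{Z_t^i=0\}$; there it equals $\sigma(x,\mu_t^i)-\sigma(x,\widehat\mu_t^N)$.
\item Your coupling is purely synchronous on $\{|Z|<\varepsilon\}$, so nothing offsets this Bessel-type drift of order $L\,\mathbb W_1^2/|Z|$.
\end{itemize}
Hence $\frac{{\rm d}}{{\rm d}t}\mathbb E f(|Z_t^i|)\le-\lambda_0\mathbb Ef(|Z_t^i|)+C\psi(N)+C\,\mathbb E|b-\widetilde b|+o_\varepsilon(1)$ does not follow.

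There is a second omission when $\overline\theta_t\neq t$. Assumption \ref{A2} then gives $L\big(|X_t^i-\widehat X^{i,N}_{\overline\theta_t}|^2+\mathbb W_1^2(\mu_t^i,\widehat\mu^N_{\overline\theta_t})\big)$, not $L\big(|Z_t^i|^2+\mathbb W_1^2(\mu_t^i,\widehat\mu^N_t)\big)$. The increments $|\widehat X^{i,N}_t-\widehat X^{i,N}_{\overline\theta_t}|$ and $\mathbb W_1(\widehat\mu^N_t,\widehat\mu^N_{\overline\theta_t})$ form a diffusion-discretisation error, again divided by $|Z|$. That error is absent from the right-hand side of the statement, so your scheme cannot land on the bound as stated.

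\textbf{Comparison with the paper.} The paper does not supply the missing estimate either. In passing to \eqref{(3.14)} it replaces $|Z^{\rm T}\Delta\sigma|^2$ by $|Z|^2|\Delta\sigma|^2$ inside the second-order term, whose coefficient $F''$ is negative. The $\sigma$-parts of ${\rm d}|Z|^2$ and ${\rm d}\langle |Z|^2\rangle$ then cancel, leaving only $\frac12f''(|\Delta\sigma|^2+|\Delta\overline\sigma|^2)\le0$ to drop. That substitution goes in the wrong direction unless $|e^{\rm T}\Delta\sigma|=|\Delta\sigma|$, e.g.\ when $d=1$.

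For $d\ge2$ you therefore need either a genuinely new ingredient, or structure giving $|\Delta\sigma_t|+|\Delta\overline\sigma_t|\le C|Z_t^i|$. An example is measure-independent $\sigma,\overline\sigma$ evaluated at time $t$. With that structure the trace term is $\le CL|Z|$, and the rest of your argument closes.

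Your other design choice differs from the paper and is the sounder one. The paper reflects $\sigma_0{\rm d}B^{1,i}+\sigma_1{\rm d}W^1$ through the particle-dependent matrix $\Pi(Z_t^{i,N,\varepsilon})$. This is where $\sigma_1$ enters $c_1$, but it gives different particles different ``common'' noises. Keeping $W,\overline W$ synchronous, as you do, is what makes $\mathbb E\,\mathbb W_1(\mu_t^i,\nu_t^{i,N})\le\mathbb E|Z_t^i|$ legitimate, and it makes $\sigma_1\neq0$ unnecessary. Do use a smooth cutoff such as $h_\varepsilon,h_\varepsilon^*$ instead of $\mathbbm 1_{\{|Z|\ge\varepsilon\}}$, so that the coupled system is well posed.
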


The proof will be given in section \ref{proof of the key estimate}, we first prepare a lemma and present a corollary.

%%%========================================================================

\begin{lemma}\label{CPoC}
Consider the McKean-Vlasov SDE \eqref{NIPS} with $\mathbb{E}|X_0^i|^q<\infty$ for all $i\in\mathbb{S}_N$ and some $q>1$. Let $\overline{\mu}_t^{N}=\frac{1}{N}\sum_{j=1}^N\delta_{X_t^j}$. Suppose that Assumptions \ref{A1} and \ref{A2} hold with $\lambda_1>2\lambda_2$. Then, there exist constants $L^*, C>0$ such that for any $L\in[0,L^*]$, $t>0$, and $i\in\mathbb{S}_N$,
\begin{align}\label{W_1_le_CN}
\mathbb{E}\mathbb{W}_1(\mu_t^i, \overline{\mu}_t^{N})\le C\left(1+(\mathbb{E}|X_0^i|^q)^{1/q}\right)\psi(N).
\end{align}
\end{lemma}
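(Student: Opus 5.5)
The plan is to exploit the conditional i.i.d. structure of the non-interacting system \eqref{NIPS}. For each $i$, $X^i$ is driven by its own idiosyncratic noises $(B^i,\overline B^i)$ and initial datum $X_0^i$. It is also driven by the common noises $(W,\overline W)$ shared by all particles. I will assume, as is implicit in the setting, that the $X_0^i$ are i.i.d. with common law $\mu$ and supported on $\Omega^B$. Pathwise uniqueness for \eqref{NIPS} then gives a measurable functional $\Phi$ with $X_t^i=\Phi_t(X_0^i,B^i,\overline B^i,W,\overline W)$, and the conditional law $\mu_t:=\mu_t^i$ is the same for all $i$. Conditionally on $\mathcal{F}^W$ (equivalently, freezing $\omega^W$ on the product space), the variables $X_t^1,\dots,X_t^N$ are i.i.d. under $\mathbb{P}^B$ with law $\mu_t(\omega^W)$. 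Two points need care here. First, the conditional law given $\mathcal{F}^W_t$ coincides with that given $\mathcal{F}^W$, because the future increments of $(W,\overline W)$ are independent of $X_t^i$. Second, the product structure $\Omega=\Omega^B\times\Omega^W$ makes the freezing rigorous via Fubini.

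Next, for fixed $\omega^W$, $\overline\mu_t^N$ is the empirical measure of $N$ i.i.d. samples from $\mu_t(\omega^W)$. I will apply the Fournier--Guillin estimate for the first Wasserstein distance with moment $q>1$:
\[
\mathbb{E}^B\mathbb{W}_1\big(\mu_t(\omega^W),\overline\mu_t^N\big)\le C_{d,q}\Big(\int|x|^q\mu_t(\omega^W)(dx)\Big)^{1/q}\psi(N),
\]
where $\psi$ has exactly the three regimes of the statement, with the excluded critical values of $q$. Taking $\mathbb{E}^W$ and applying Jensen's inequality (since $1/q<1$) yields
\[
\mathbb{E}\mathbb{W}_1(\mu_t^i,\overline\mu_t^N)\le C_{d,q}\big(\mathbb{E}|X_t^i|^q\big)^{1/q}\psi(N).
\]

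The remaining task, which I expect to be the main obstacle, is a uniform-in-time moment bound
\[
\sup_{t\ge0}\mathbb{E}|X_t^i|^q\le C\big(1+\mathbb{E}|X_0^i|^q\big).
\]
This bound is where $\lambda_1>2\lambda_2$ and the smallness of $L$ enter. I will apply It\^o's formula to $(1+|X_t^i|^2)^{q/2}$ and use \eqref{B1} for the drift. For the diffusion terms I will use \eqref{B2}--\eqref{B3} together with the additive constants $\sigma_0,\sigma_1$, which contribute only constant terms.

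The measure term is handled by $\mathbb{W}_1(\mu_t,\delta_0)=\mathbb{E}[|X_t^i|\,|\mathcal{F}^W_t]$. Hence, after taking expectations,
\[
\mathbb{E}\big[|X_t^i|^{q-2}\mathbb{W}_1^2(\mu_t,\delta_0)\big]\le \mathbb{E}|X_t^i|^q,
\]
by Hölder's inequality combined with conditional Jensen. The same inequality controls the $L\,\mathbb{W}_1^2$ terms.

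Collecting terms should give
\[
\frac{\d}{\d t}\mathbb{E}(1+|X_t^i|^2)^{q/2}\le -q\big(K_1-\tfrac{\lambda_2}{2}-c_qL\big)\mathbb{E}|X_t^i|^q+C.
\]
The constant $K_1-\lambda_2/2=\lambda_1-2\lambda_2$ is positive by hypothesis, so choosing $L^*$ small makes the coefficient of $\mathbb{E}|X_t^i|^q$ negative. Gronwall's inequality then yields the uniform bound. A localization argument with stopping times justifies the martingale terms, and for $q\in(1,2)$ the function $(1+|x|^2)^{q/2}$ is still $C^2$, so It\^o's formula applies. Finally, combining $(1+\mathbb{E}|X_0^i|^q)^{1/q}\le 1+(\mathbb{E}|X_0^i|^q)^{1/q}$ with the empirical-measure bound gives \eqref{W_1_le_CN}.
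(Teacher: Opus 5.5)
For $q\ge 2$ your route is the paper's. You apply Fournier--Guillin to $\mathbb{W}_1(\mu_t^i,\overline\mu_t^N)$ and then prove a uniform $q$-th moment bound from It\^o's formula with rate $q(\lambda_1-2\lambda_2-cL)$. Freezing $\omega^W$ and then applying Jensen is what is needed to use Fournier--Guillin with a random conditional law; the paper simply cites the result. Making the coefficient strictly negative before absorbing lower-order terms by Young also avoids the paper's constant $C_0\propto L^{-(q-2)/2}$, which degenerates as $L\to 0$.

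The step that fails is your extension to $q\in(1,2)$. The inequality $\mathbb{E}[|X_t^i|^{q-2}\mathbb{W}_1^2(\mu_t,\delta_0)]\le\mathbb{E}|X_t^i|^q$ is H\"older with exponents $q/(q-2)$ and $q/2$ plus conditional Jensen, so it needs $q\ge 2$. For $q<2$ it is false, and switching to the bounded weight $(1+|x|^2)^{q/2-1}$ does not rescue it. Take the (conditional) law $(1-p)\delta_0+p\delta_{x_M}$ with $|x_M|=M$, $p=M^{-a}$ and $0<a<2-q$. Then $\mathbb{W}_1^2(\mu,\delta_0)\int(1+|x|^2)^{q/2-1}\mu({\rm d}x)\gtrsim M^{2-2a}$, whereas $\int(1+|x|^2)^{q/2}\mu({\rm d}x)\lesssim M^{q-a}$, and $2-2a>q-a$.

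In general you only get $\mathbb{E}\,\mathbb{W}_1^2(\mu_t,\delta_0)\le\mathbb{E}\big[(\mathbb{E}[|X_t^i|^q\,|\,\mathcal{F}_t^W])^{2/q}\big]$, which is not controlled by $\mathbb{E}|X_t^i|^q$. Even without common noise the bound is $(\mathbb{E}|X_t^i|^q)^{2/q}$, which is superlinear, so the Gronwall step gives no uniform-in-time bound for large initial moments.

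In the drift you can avoid the square by not applying Young in \eqref{B1}. If you keep $\lambda_2|x|\mathbb{W}_1(\mu,\delta_0)$ linear, H\"older with exponents $q/(q-1)$ and $q$ works for every $q>1$. However, the It\^o correction coming from \eqref{B2}--\eqref{B3} genuinely contains $L\,\mathbb{W}_1^2(\mu_t,\delta_0)$. So for $q\in(1,2)$ you need an additional ingredient, such as a separate uniform bound on $\mathbb{E}\,\mathbb{W}_1^2(\mu_t,\delta_0)$, or bounded $\sigma,\overline\sigma$ as in Assumption \ref{N1}. Otherwise you must restrict to $q\ge 2$. The paper does not cover this range either: its It\^o computation is written only for $q>2$.
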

\begin{proof}
According to \cite[Theorem 1]{Fournier2014}, there exists a constant $C>0$ depending on $q$ and $d$ such that
\begin{align}\label{W_1_mu}
\mathbb{E}\mathbb{W}_1(\mu_t^i, \overline{\mu}_t^{N})\le C\left(\mathbb{E}|X_t^i|^q\right)^{1/q}\psi(N).
\end{align}
Next, we verify that there exists a constant $C_0>0$ such that for all $i\in\mathbb{S}_N$ and all $t>0$,
\begin{align}\label{X_t_1}
\mathbb{E}|X_t^i|^q\le \mathbb{E}|X_0^i|^q+C_0.
\end{align}

Applying It\^o's formula to the function ${\rm e}^{\gamma_0t}|X_t^i|^q$ for $q>2$ with 
$$\gamma_0=q\left(K_1-\frac{\lambda_2}{2}-4qL\right),$$%=q\left(\lambda_1-2\lambda_2-4qL\right)
where $K_1=\frac{1}{2}(2\lambda_1-3\lambda_2)$ was defined in Remark \ref{Remark-1.1}. Note that since $\lambda_1-2\lambda_2>0$, we can choose $L^*>0$ sufficiently small such that $\lambda_1-2\lambda_2-4qL^*>0$, then for any $L\in[0,L^*]$, it holds $\gamma_0>0$. According to Remark \ref{Remark-1.1} and the Young inequality, we deduce that
\begin{align*}
\mathbb{E}\left({\rm e}^{\gamma_0t}|X_t^i|^q\right)\le &\mathbb{E}|X_0^i|^q+\mathbb{E}\int_0^t{\rm e}^{\gamma_0s}\Bigg\{ \gamma_0|X_s^i|^q+q|X_s^i|^{q-2}\<X_s^i, b(X_s^i,\mu_s^i)\>\\
&\quad\qquad\qquad\qquad\qquad+\frac{q(q-1)}{2}|X_s^i|^{q-2}\left(d\sigma_0^2+d\sigma_1^2+|\sigma(X_s^i,\mu_s^i)|^2+|\overline\sigma(X_s^i,\mu_s^i)|^2\right)\Bigg\}{\rm d}s\notag\\
%\le &\mathbb{E}|X_0^i|^q+\mathbb{E}\int_0^t{\rm e}^{\lambda_0s}\Bigg\{ \lambda_0|X_s^i|^q+q|X_s^i|^{q-2}\bigg(-K_1|X_s^i|^2+\frac{\lambda_3}{2}\mathbb{W}_1^2(\mu_s^i,\delta_0)+K_2\\
%&\qquad\qquad\qquad\qquad\qquad\qquad+\frac{q-1}{2}\left(\sigma_0^2+(K_4+K_5)(1+|X_s^i|^2+\mathbb{W}_1^2(\mu_s^i,\delta_0))\right)\bigg)\Bigg\}{\rm d}s\notag\\
\le &\mathbb{E}|X_0^i|^q+\mathbb{E}\int_0^t{\rm e}^{\gamma_0s}\Bigg\{ \left(\gamma_0-qK_1+2q(q-1)L\right)|X_s^i|^q\\
&\quad\qquad\qquad\qquad\qquad+q\left(\lambda_2/2+2(q-1)L\right)|X_s^i|^{q-2}\mathbb{W}_1^2(\mu_s^i,\delta_0)\notag\\
&\quad\qquad\qquad\qquad\qquad+q\left(K_2+\frac{q-1}{2}\left(d\sigma_0^2+d\sigma_1^2+2|\sigma(0,\delta_0)|^2+2|\overline\sigma(0,\delta_0)|^2\right)\right)|X_s^i|^{q-2}\Bigg\}{\rm d}s\notag\\
\le &\mathbb{E}|X_0^i|^q+\int_0^t{\rm e}^{\gamma_0s} \Big(\gamma_0-qK_1+q\lambda_2/2+4q^2L\Big)\mathbb{E}|X_s^i|^q{\rm d}s+C_0\int_0^t{\rm e}^{\gamma_0s}{\rm d}s\notag\\
<&\mathbb{E}|X_0^i|^q+C_0{\rm e}^{\gamma_0t},
\end{align*}
where the constant $C_0=2\left(K_2+\frac{q-1}{2}\left(d\sigma_0^2+d\sigma_1^2+2|\sigma(0,\delta_0)|^2+2|\overline\sigma(0,\delta_0)|^2\right)\right)^{q/2}(4L)^{-(q-2)/2}$. Hence, 
\begin{align*}
\mathbb{E}|X_t^i|^q\le \mathbb{E}|X_0^i|^q{\rm e}^{-\gamma_0t}+C_0\le \mathbb{E}|X_0^i|^q+C_0, ~\forall t\ge 0,~ i\in\mathbb{S}_N.
\end{align*}
Combining \eqref{W_1_mu} and \eqref{X_t_1}, we can derive \eqref{W_1_le_CN}.
\end{proof}

%=====================================================
As an immediate by-product of the Theorem \ref{main theorem}, the uniform-in-time conditional PoC will be reproduced right away.
\begin{corollary}[Conditional propagation of chaos]
Let $X_0^{i,N}=X_0^i$ for all $i\in\mathbb{S}_N$. Suppose that Assumptions \ref{A1} and \ref{A2} hold and that $\sigma_0\sigma_1\neq 0$. Then there exists a constant $C>0$ such that for any $t>0$ and $i\in\mathbb{S}_N$, 
\begin{align*}
\mathcal{W}_1\big(\mathcal{L}(\mu_t^i),\mathcal{L}(\nu_t^{i,N})\big)\le C\psi(N),
\end{align*}
in the case of $\mathbb{E}|X_0^i|^q<\infty$ for some $q>1$, where $\mu_t^i=\mathcal{L}(X_t^i\vert \mathcal{F}_t^W)$ stands for the regular conditional distribution of $X_t^i$, determined by \eqref{NIPS}, $\nu_t^{i,N}=\mathcal{L}(X_t^{i,N}\vert \mathcal{F}_t^W)$ stands for the conditional distribution of $X_t^{i,N}$, determined by \eqref{IPS} with $\widetilde{b}(x,\mu)=b(x,\mu), \theta_t=\overline\theta_t=t$.
\end{corollary}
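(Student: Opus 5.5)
The corollary follows by specializing Theorem \ref{main theorem}; the only care needed is in matching hypotheses. We take $\widetilde{b}=b$ and $\theta_t=\overline\theta_t=t$ in \eqref{IPS}, so that \eqref{IPS} becomes the interacting particle system \eqref{IPS-22}. With this choice the integrand in the third term of the estimate of Theorem \ref{main theorem} is
\begin{align*}
\mathbb{E}\vert b(X_s^{i,N},\widetilde{\mu}_s^N)-\widetilde{b}(X_{\theta_s}^{i,N},\widetilde{\mu}_{\overline{\theta}_s}^N)\vert=\mathbb{E}\vert b(X_s^{i,N},\widetilde{\mu}_s^N)-b(X_{s}^{i,N},\widetilde{\mu}_{s}^N)\vert=0
\end{align*}
for every $s$, so the whole integral term disappears.

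Next we use the assumption $X_0^{i,N}=X_0^i$. This gives the common initial law $\mu=\nu=\mathcal{L}(X_0^i)$, and therefore $\mathbb{W}_1(\mu,\nu)=0$. The term ${\rm e}^{-\lambda_0 t}\mathbb{W}_1(\mu,\nu)$ also vanishes.

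To invoke Theorem \ref{main theorem} we still need $\mu\in\mathcal{P}_q(\mathbb{R}^d)$ with $q>1$, which is exactly the hypothesis $\mathbb{E}|X_0^i|^q<\infty$. We also need $\nu\in\mathcal{P}_1(\mathbb{R}^d)$, which holds because $q>1$. Once these are checked, the estimate of Theorem \ref{main theorem} reduces to $\mathcal{W}_1(\mathcal{L}(\mu_t^i),\mathcal{L}(\nu_t^{i,N}))\le C\psi(N)$. The constant $C$ does not depend on $t$, $N$ or $i$, which is what makes the bound uniform in time.

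The main obstacle is bookkeeping of the hypotheses. The statement of the corollary only says ``Assumptions \ref{A1} and \ref{A2} hold'', but Theorem \ref{main theorem} also requires $\lambda_1>2\lambda_2$, $\lambda_2<\lambda_2^*$ and $L\le L^*$. We would make these standing conditions explicit in the corollary, or read the corollary as inheriting them from Theorem \ref{main theorem}. We would also note that the constant $C$ in $\psi(N)$ depends on the $q$-th moment of the initial data, through Lemma \ref{CPoC} as used inside the proof of Theorem \ref{main theorem}. That moment is finite by assumption, so it is absorbed into $C$.
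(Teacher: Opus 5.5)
Your proposal is correct and matches the paper's route. The paper treats the corollary as an immediate specialization of Theorem \ref{main theorem}: taking $\widetilde b=b$ and $\theta_t=\overline\theta_t=t$ removes the drift-discrepancy integral, and $X_0^{i,N}=X_0^i$ gives $\mathbb{W}_1(\mu,\nu)=0$; you are also right that the corollary silently inherits $\lambda_1>2\lambda_2$, $\lambda_2<\lambda_2^*$ and $L\le L^*$ from the theorem, even though its statement omits them.
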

%\subsection{Conditional propagation of chaos}
%{\color{red}The qualitative convergence and quantitative convergence rate of the conditional propagation of chaos(PoC) has been studied in many recent works,

 %By invoking \cite[Theorem 1]{Fournier2014}, the convergence rate of conditional propagation of chaos can be established once the initial distribution enjoys the high-order moment.}
 
\begin{remark}[Sharp rate for conditional propagation of chaos]%关于PoC最优收敛阶
Note that in the present case, the coefficients are only assumed to be Lipschitz continuous in $\mathbb{W}_1$-distance with respect to the measure variable, so that \cite[Theorem 1]{Fournier2014} for $p=1$ is used to estimate the convergence rate of conditional propagation of chaos, which depends on the dimension $d$ and seems a little complicated. However, if we use the following assumption replacing the Lipschitz continuity in $\mathbb{W}_1$-distance, the convergence rate can be improved to be $\mathcal{O}(N^{-1/2})$. The assumption is: There exists a constant $L>0$ and a Lipschitz continuous function $\phi:\mathbb{R}^d\to\mathbb{R}^d$ with the Lipschitz constant $L_\phi\le 1$ such that for all $x\in\mathbb{R}^d$ and $\mu,\nu\in\mathcal P(\mathbb{R}^d)$.
{\setlength\abovedisplayskip{6pt}
\setlength\belowdisplayskip{6pt}
  \begin{align*}
|(b,\sigma,\overline\sigma)(x,\mu)-(b,\sigma,\overline\sigma)(x,\nu)|\le Ld_{\phi}(\mu,\nu),
\end{align*}}
where $d_\phi(\mu,\nu):=\left\vert \int_{\mathbb{R}^d} \phi(x) \mu({\rm d}x)-\int_{\mathbb{R}^d} \phi(x) \nu({\rm d}x)\right\vert\le\mathbb{W}_1(\mu,\nu)$. The proof of this result makes use of the conditional Rosenthal inequality (see \cite{MR4914659,soni2025tamedeulerapproximationfully,MR4988090}).

\end{remark}
%%==================================
\subsection{Asymptotic coupling by reflection}
The proof of Theorem \ref{main theorem} relies on the coupling process we present below. In order to describe the asymptotic coupling by reflection, we need to introduce some notations. For $\varepsilon> 0$, define the cut-off function $h_{\varepsilon}$ by
\begin{align*}
h_{\varepsilon}(r)=\begin{cases}
      0,&0\le r\le \varepsilon, \\
      1-{\rm exp}\left(\frac{r-\varepsilon}{r-2\varepsilon}\right),& \varepsilon< r< 2\varepsilon, \\
     1,  & r\ge 2\varepsilon,
\end{cases}
\end{align*}
and $h_{\varepsilon}^*(r):=\sqrt{1-h_{\varepsilon}^2(r)}, r\ge 0$. Set 
\[\Pi(x):=I_{d\times d}-2\mathbf{e}(x)\otimes\mathbf{e}(x),\quad x\in\mathbb{R}^d,\]
where $I_{d\times d}$ is the $d\times d$ identity matrix, 
\[\mathbf{e}(x):=\frac{x}{|x|}\mathbbm{1}_{\{x\neq 0\}}+(1, 0,\dots, 0)^{\rm T}\mathbbm{1}_{\{x=0\}}\]
and $\mathbf{e}(x)\otimes\mathbf{e}(x)$ means the tensor between $\mathbf{e}(x)$ and $\mathbf{e}(x)$. 
%{\color{red}For $x\in\mathbb{R}^N$, define $$\|x\|_1=\frac{1}{N}\sum_{j=1}^N |x^j|.$$}
Furthermore, we assume that $B^{1,1}:=(B^{1,1}_t)_{t\ge 0},\cdots,B^{1,N}:=(B^{1,N}_t)_{t\ge 0}$ (resp. $B^{2,1}:=(B^{2,1}_t)_{t\ge 0}, \cdots$, $B^{2,N}:=(B^{2,N}_t)_{t\ge 0}$) are mutually independent $d$-dimensional Brownian motions carried on the same probability space as that of $B^1,\cdots, B^N$, $W^{1}:=(W^{1}_t)_{t\ge 0}$ and $W^{2}:=(W^{2}_t)_{t\ge 0}$ are mutually independent $d$-dimensional Brownian motions carried on the same probability space as that of $W$. Assume also that $(B^{1,i})_{i\in\mathbb{S}_N}$, $(B^{2,i})_{i\in\mathbb{S}_N}$,  $(\overline{B}^i)_{i\in\mathbb{S}_N}$, $W^{1}$, $W^{2}$ and $\overline{W}$ are mutually independent. 

With the proceeding notations at hand, we can construct the asymptotic coupling by reflection associated with \eqref{NIPS} and \eqref{IPS},
\begin{align}\label{coupling process}
\begin{cases}
\d  Y_t^{i,\varepsilon}=b(Y_t^{i,\varepsilon},\widehat{\mu}_t^{i,\varepsilon}) \d t+ h_\varepsilon(|Z_t^{i,N,\varepsilon}|)(\sigma_0{\rm d}B_t^{1,i}+\sigma_1{\rm d}W_t^{1})+h_\varepsilon^*(|Z_t^{i,N,\varepsilon}|)(\sigma_0{\rm d}B_t^{2,i}+\sigma_1{\rm d}W_t^{2})\\
~\qquad\qquad+ \sigma(Y_t^{i,\varepsilon},\widehat{\mu}_t^{i,\varepsilon}){\rm d}\overline B_t^i +\overline\sigma(Y_t^{i,\varepsilon},\widehat{\mu}_t^{i,\varepsilon}){\rm d}\overline{W}_t,\\
\d   Y_t^{i,N,\varepsilon}=\widetilde{b}(Y_{\theta_t}^{i,N,\varepsilon},\widetilde{\mu}_{\overline{\theta}_t}^{N,\varepsilon}){\rm d}t+h_\varepsilon(|Z_t^{i,N,\varepsilon}|)\Pi(Z_t^{i,N,\varepsilon})(\sigma_0{\rm d}B_t^{1,i}+\sigma_1{\rm d}W_t^{1})+ h_\varepsilon^*(|Z_t^{i,N,\varepsilon}|)(\sigma_0{\rm d}B_t^{2,i}+\sigma_1{\rm d}W_t^{2})\\
~\qquad\qquad+ \sigma(Y_{\overline{\theta}_t}^{i,N,\varepsilon},\widetilde{\mu}_{\overline{\theta}_t}^{N,\varepsilon}){\rm d}\overline B_t^i+\overline\sigma(Y_{\overline{\theta}_t}^{i,N,\varepsilon},\widetilde{\mu}_{\overline{\theta}_t}^{N,\varepsilon}){\rm d}\overline{W}_t,
\end{cases}
\end{align}
with the initial condition $(Y_0^{i,\varepsilon}, Y_0^{i,N,\varepsilon})_{i\in\mathbb{S}_N}=(X_0^i, X_0^{i,N})_{i\in\mathbb{S}_N}$, which are i.i.d. $\mathcal{F}_0^B$-measurable random variables. The quantities $\widehat{\mu}_t^{i,\varepsilon}, \widetilde{\mu}_t^{N,\varepsilon}$, and $Z_t^{i,N,\varepsilon}$ are defined respectively by
$$\widehat{\mu}_t^{i,\varepsilon}=\mathcal{L}(Y_t^{i,\varepsilon}\vert \mathcal{F}_t^W), \quad \widetilde{\mu}_t^{N,\varepsilon}=\frac{1}{N}\sum_{j=1}^N\delta_{Y_t^{j,N,\varepsilon}}, \quad Z_t^{i,N,\varepsilon}=Y_t^{i,\varepsilon}-Y_t^{i,N,\varepsilon}.$$

Furthermore, for the notational brevity, we set for any $t\ge 0$,
$${\bf X}_t^{N}:=(X_t^1,\cdots,X_t^N), \quad {\bf X}_t^{N,N}:=(X_t^{1,N},\cdots,X_t^{N,N}),$$
and
$${\bf Y}_t^{N, \varepsilon}:=(Y_t^{1, \varepsilon},\cdots,Y_t^{N, \varepsilon}), \quad {\bf Y}_t^{N,N, \varepsilon}:=(Y_t^{1,N, \varepsilon},\cdots,Y_t^{N,N, \varepsilon}).$$

Now, we claim that for any $\varepsilon>0$, $({\bf Y}_t^{N, \varepsilon},{\bf Y}_t^{N,N, \varepsilon})_{t\ge 0}$ is a coupling of $({\bf X}_t^{N})_{t\ge 0}$ and $({\bf X}_t^{N,N})_{t\ge 0}$ given $\mathcal{F}^W$.
\begin{lemma}\label{coupling process-lemma}
Assume that the McKean-Vlasov SDEs \eqref{NIPS} and \eqref{IPS} are weakly well-posed. Then, for any $\varepsilon>0$, the path-valued processes $({\bf Y}_t^{N, \varepsilon})_{t\ge 0}$ and $({\bf Y}_t^{N, N, \varepsilon})_{t\ge 0}$ share the common conditional distributions given $\mathcal{F}^W$ as those of $({\bf X}_t^{N})_{t\ge 0}$ and $({\bf X}_t^{N, N})_{t\ge 0}$, respectively.
\end{lemma}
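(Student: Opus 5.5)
The plan is to show, via Lévy's characterization, that the noise terms driving the coupling \eqref{coupling process} are new Brownian motions with the right independence structure. Weak well-posedness of \eqref{NIPS} and \eqref{IPS}, in a form that also holds conditionally on the common noise, will then identify the conditional laws.

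The one subtlety is the common-noise part. The noise $\sigma_1{\rm d}W_t$ in \eqref{NIPS} is replaced by $h_\varepsilon(\cdot)\sigma_1{\rm d}W_t^{1}+h_\varepsilon^*(\cdot)\sigma_1{\rm d}W_t^{2}$ in the $Y^{i,\varepsilon}$-equation. So for each $i$ we cannot merely define a new $d$-dimensional Brownian motion. We must treat the pair (idiosyncratic, common) jointly and check that the resulting common component is the same process for all $i$. The natural normalization is the one implicit in the construction: $\mathcal{F}^W$ is understood as generated by $(W^1,W^2,\overline W)$, which play the role of $(W,\overline W)$. Conditionally on $\mathcal{F}^W$, the common-noise paths $W^1,W^2,\overline W$ are frozen, exactly as described in the contributions paragraph of the introduction.

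\textbf{Step 1 (first marginal).} Fix a realization of the common noise, i.e.\ work under $\mathbb{P}^B$ with $\omega^W$ fixed. For each $i$ set
\[\widehat B_t^{i}:=\int_0^t h_\varepsilon(|Z_s^{i,N,\varepsilon}|){\rm d}B_s^{1,i}+\int_0^t h_\varepsilon^*(|Z_s^{i,N,\varepsilon}|){\rm d}B_s^{2,i}.\]
Since $h_\varepsilon^2+(h_\varepsilon^*)^2=1$ and $B^{1,i},B^{2,i}$ are independent, $\langle \widehat B^{i},\widehat B^{j}\rangle_t=\delta_{ij}tI_{d}$ and $\langle \widehat B^{i},\overline B^{j}\rangle=0$. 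Lévy's characterization then shows that $(\widehat B^i,\overline B^i)_{i\in\mathbb{S}_N}$ is a family of independent Brownian motions, independent of $\mathcal{F}_0^B$ and of the frozen common noise.

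The common noise requires more care. The combination $\widehat W^{i}:=\int h_\varepsilon(|Z^{i}|){\rm d}W^1+\int h^*_\varepsilon(|Z^i|){\rm d}W^2$ depends on $i$, which is undesirable. I will therefore restrict to the genuine goal: the conditional law of each component given the common noise. I will argue at the level of the joint quantity $\sigma_0{\rm d}\widehat B^{i}+\sigma_1{\rm d}\widehat W^{i}$. Conditionally on the sigma-field generated by $\widehat W^i$, which is a Brownian motion by Lévy's theorem, $\widehat B^i$ is still a Brownian motion independent of it, by the quadratic-covariation computation above. The conditional law $\widehat\mu^{i,\varepsilon}_t$ is taken with respect to the filtration of the common noise driving that component. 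Hence $(Y^{i,\varepsilon})$ solves \eqref{NIPS} driven by $(\widehat B^i,\overline B^i,\widehat W^i,\overline W)$ with initial data $X_0^i$. Weak uniqueness of \eqref{NIPS} then yields that the conditional law of $Y^{\cdot,\varepsilon}$ given the common noise coincides with that of ${\bf X}^N$ given $\mathcal{F}^W$.

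\textbf{Step 2 (second marginal).} Here $\Pi(x)$ is an orthogonal symmetric matrix, so
\[\widetilde B_t^i:=\int_0^t h_\varepsilon\Pi(Z_s^{i,N,\varepsilon}){\rm d}B_s^{1,i}+\int_0^t h^*_\varepsilon{\rm d}B_s^{2,i}\]
again has quadratic variation $tI_d$. Different indices $i$ are orthogonal, and $\widetilde B^i$ is orthogonal to $\overline B^j$. The same holds for the common part $\int h_\varepsilon\Pi{\rm d}W^1+\int h^*_\varepsilon{\rm d}W^2$. Lévy's characterization and weak uniqueness of \eqref{IPS} then give the claim for ${\bf Y}^{N,N,\varepsilon}$. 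The time-delayed arguments $\theta_t,\overline\theta_t$ do not affect this, since the equation is solved stepwise.

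\textbf{Main obstacle.} The main obstacle is precisely the common-noise consistency. The rotated or mixed common noises depend on $Z^{i}$ and hence on $i$. The conditional law must be taken with respect to the $\sigma$-algebra generated by the new common noise, and I must verify that this $\sigma$-algebra agrees in law with $\mathcal{F}^W$ jointly with the idiosyncratic noises. I would first try to resolve this by establishing the joint Lévy characterization for the full vector of new noises together with $\overline W$. I would then invoke the Yamada--Watanabe type result for conditional McKean--Vlasov SDEs implicit in the weak well-posedness hypothesis, which states that the joint law of (solution, noises) is unique. This yields equality of the conditional laws after disintegration.
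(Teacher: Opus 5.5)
Your Steps 1--2 follow the paper's route (L\'evy's characterization plus weak uniqueness). You also correctly locate the weak point, which the paper's own proof passes over by writing $\widetilde W$ without the index $i$ and treating it as a single, $\mathcal F^W$-measurable common noise. However, your repair does not work, so there is a genuine gap exactly at the common noise.

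You are not free to choose the $\sigma$-algebra with respect to which $\widehat\mu^{i,\varepsilon}_t$ is a conditional law. The coupling \eqref{coupling process} fixes $\widehat\mu_t^{i,\varepsilon}=\mathcal L(Y_t^{i,\varepsilon}\vert\mathcal F_t^W)$, and the lemma is about conditioning on $\mathcal F^W$. Your driver $\widehat W^i=\int h_\varepsilon(|Z^{i,N,\varepsilon}|)\,{\rm d}W^1+\int h_\varepsilon^*(|Z^{i,N,\varepsilon}|)\,{\rm d}W^2$ is not $\mathcal F^W$-measurable, because its integrands depend on $\omega^B$ through $B^{1,i},B^{2,i},\overline B^i$ and the initial data. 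Freezing $\omega^W$ therefore does not freeze it, and $Y^{i,\varepsilon}$ is not a solution of \eqref{NIPS} whose measure argument is the conditional law given its own common noise.

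Step 2 fails for a related reason. \eqref{IPS} requires one Brownian motion shared by all particles, whereas the drivers $\int h_\varepsilon(|Z^{i,N,\varepsilon}|)\Pi(Z^{i,N,\varepsilon})\,{\rm d}W^1+\int h^*_\varepsilon(|Z^{i,N,\varepsilon}|)\,{\rm d}W^2$ differ across $i$. For $i\neq j$ their cross-variation vanishes while, e.g., $|Z^{i,N,\varepsilon}|\ge2\varepsilon$ and $|Z^{j,N,\varepsilon}|\le\varepsilon$. They also differ from the driver of $Y^{i,\varepsilon}$. Consequences:
\begin{itemize}
\item a joint L\'evy characterization yields $N$ distinct Brownian motions rather than a common one;
\item no Yamada--Watanabe argument can identify ${\bf Y}^{N,N,\varepsilon}$ with a weak solution of \eqref{IPS};
\item under your reinterpretation the two components would be conditioned on different $\sigma$-algebras, whereas the proof of Theorem \ref{main theorem} needs a common one to bound $\mathbb W_1(\mu_t^i,\nu_t^{i,N})$ by a conditional expectation of $|Z_t^{i,N,\varepsilon}|$.
\end{itemize}

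In fact, with the coupling as written and $\sigma_1\neq0$, the claimed identity of conditional laws given $\mathcal F^W$ is false, so no argument can close this step. Take $d=N=1$, $b=\widetilde b=0$, $\sigma=\overline\sigma=0$, $\theta_t=\overline\theta_t=t$, $\sigma_0=\sigma_1=1$, $X_0^1=0$, and $X_0^{1,N}=-3\varepsilon\mathbf{1}_A$ with $A\in\mathcal F_0^B$, $\mathbb P(A)=1/2$. Write $Z=Z^{1,N,\varepsilon}$.
\begin{itemize}
\item On $A^c$ one has $Z\equiv0$, hence $Y^{1,\varepsilon}=B^{2,1}+W^2$.
\item On $A$, $Y^{1,\varepsilon}=B^{1,1}+W^1$ up to the positive time $\inf\{t:|Z_t|\le2\varepsilon\}$.
\end{itemize}
So for small $t$, $\mathcal L(Y_t^{1,\varepsilon}\vert\mathcal F^W)$ is, up to a vanishing total-variation error, the equal mixture of $\mathcal N(W_t^1,t)$ and $\mathcal N(W_t^2,t)$. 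This is a.s.\ not Gaussian, while $\mathcal L(X_t^1\vert\mathcal F^W)=\mathcal N(W_t,t)$.

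The missing idea is to couple the common noise \emph{synchronously}: keep the same $\sigma_1\,{\rm d}W_t$ in both equations and reflect only $\sigma_0\,{\rm d}B^{1,i}$. Then:
\begin{itemize}
\item L\'evy's characterization applied to the whole vector formed by your $\widehat B^1,\dots,\widehat B^N$ (resp.\ $\widetilde B^1,\dots,\widetilde B^N$), $\overline B^1,\dots,\overline B^N$, $W$, $\overline W$ shows that it is a standard Brownian motion. This also repairs the paper's passage from vanishing covariance to independence.
\item $\widehat\mu^{i,\varepsilon}_t$ is then genuinely the conditional law given the common noise.
\item Weak uniqueness, in the sense of joint laws of solution and noises, yields the lemma for both systems.
\end{itemize}
The price in Theorem \ref{main theorem} is that $\sigma_0^2+\sigma_1^2$ is replaced by $\sigma_0^2$ in the It\^o computation for $|Z_t^{i,N,\varepsilon}|^2$ and in $c_1$. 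In exchange, the hypothesis $\sigma_1\neq0$ is no longer needed.
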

\begin{proof}
For any $i\in\mathbb{S}_N$, we first show that the conditional distributions of $(Y_t^{i,\varepsilon})_{t\ge 0}$ and $(X_t^i)_{t\ge 0}$ given $\mathcal{F}^W$ are identical. Set for $i\in\mathbb{S}_N$ and $t\ge 0$,
\begin{align*}
\widetilde B_t^{i}=&\int_0^t h_\varepsilon(|Z_s^{i,N,\varepsilon}|){\rm d} B_s^{1,i}+\int_0^t h_\varepsilon^*(|Z_s^{i,N,\varepsilon}|){\rm d}B_s^{2,i},\notag\\
\widetilde W_t=&\int_0^t h_\varepsilon(|Z_s^{i,N,\varepsilon}|){\rm d} W_s^{1}+\int_0^t h_\varepsilon^*(|Z_s^{i,N,\varepsilon}|){\rm d}W_s^{2}.
\end{align*}
Recall that $h_\varepsilon(r)^2+h_\varepsilon^*(r)^2=1$ for all $r\ge 0$, since $B^{1,i}$ and $B^{2,i}$ (resp. $W^{1}$ and $W^{2}$) are mutually independent, L\'evy's characterization implies that $\widetilde B^i:=(\widetilde B_t^i)_{t\ge 0}$ and $\widetilde W:=(\widetilde W_t)_{t\ge 0}$ are still Brownian motions. Then, the McKean-Vlasov SDE with common noise solved by $(Y_t^{i,\varepsilon})_{t\ge 0}$ can be rewritten as
\begin{align}\label{(2.2)}
\d   Y_t^{i,\varepsilon}=&b(Y_t^{i,\varepsilon},\widehat{\mu}_t^{i,\varepsilon}){\rm d}t+\sigma_0{\rm d}\widetilde B_t^i+ \sigma(Y_t^{i,\varepsilon},\widehat{\mu}_t^{i,\varepsilon}){\rm d}\overline B_t^i+\sigma_1{\rm d}\widetilde W_t+\overline\sigma(Y_t^{i,\varepsilon},\widehat{\mu}_t^{i,\varepsilon}){\rm d}\overline{W}_t.
\end{align}
In order to prove that, for any $\varepsilon>0$, the conditional distribution $(\widehat{\mu}_t^{i,\varepsilon})_{t\ge 0}$ is equal to $(\mu_t^i)_{t\ge 0}$, it remains to verify that, for any $i\neq j$, $\widetilde B_t^i$ and $\widetilde B_t^j$ are mutually independent. For any $u,v\in\mathbb{R}^d, i, j\in\mathbb{S}_N$ with $i\neq j$, and $t>0$, consider $\mathbb{E}(\<u,\widetilde B_t^i\>\<v,\widetilde B_t^j\>).$ Indeed, by applying It\^o's formula, it follows that for $u,v\in\mathbb{R}^d, i, j\in\mathbb{S}_N$ with $i\neq j$, and $t>0$,
\begin{align}\label{(2.3)}
{\rm d}(\<u,\widetilde B_t^i\>\<v,\widetilde B_t^j\>)=&\<u,\widetilde B_t^i\>{\rm d}\<v,\widetilde B_t^j\>+\<v,\widetilde B_t^j\>{\rm d}\<u,\widetilde B_t^i\>+{\rm d}[\<u,\widetilde B_t^i\>,\<v,\widetilde B_t^j\>]\notag\\
=&\<u,\widetilde B_t^i\>{\rm d}\<v,\widetilde B_t^j\>+\<v,\widetilde B_t^j\>{\rm d}\<u,\widetilde B_t^i\>,
\end{align}
where the second identity holds true due to the quadratic variation $[\<u,\widetilde B_t^i\>,\<v,\widetilde B_t^j\>]=0$, which is valid since $B^{1,1},\cdots, B^{1,N}$ (resp. $B^{2,1},\cdots, B^{2,N}$) are independent and $(B^{1,1},\cdots, B^{1,N})$ is also independent of $(B^{2,1},\cdots, B^{2,N})$. Obviously, \eqref{(2.3)} manifests that $(\<u,\widetilde B_t^i\>\<v,\widetilde B_t^j\>)_{t\ge 0}$ is a martingale. This results in that the covariance matrix $\mathbb{E}(\widetilde B_t^i\otimes \widetilde B_t^j), i\neq j$, is a $d\times d$ zero matrix. Hence, we conclude that, for any $i\neq j$, $\widetilde B_t^i$ and $\widetilde B_t^j$ are mutually independent. Moreover, we can also deduce that $(\widetilde B^i)_{i\in\mathbb{S}_N}$, $({\overline B}^i)_{i\in\mathbb{S}_N}$, $\widetilde W$, and $\overline{W}$ are mutually independent. Thus, thanks to the weak uniqueness of \eqref{NIPS}, we conclude that $({\bf X}_t^N)_{t\ge 0}$ and $({\bf Y}_t^{N,\varepsilon})_{t\ge 0}$ posses the same conditional distribution given $\mathcal{F}^W$.

For $i\in\mathbb{S}_N$ and $t\ge 0$, let
$$\widehat B_t^{1,i}=\int_0^t\Pi(Z_s^{i,N,\varepsilon})\d B_s^{1,i},~\widehat W_t^{1}=\int_0^t\Pi(Z_s^{i,N,\varepsilon})\d W_s^{1},$$
with this notation, the SDE solved by $(Y_t^{i,N,\varepsilon})_{t\ge 0}$ can be reformulated as
\begin{align}\label{(2.1)}
\d   Y_t^{i,N,\varepsilon}=&\widetilde{b}(Y_{\theta_t}^{i,N,\varepsilon},\widetilde{\mu}_{\overline{\theta}_t}^{N,\varepsilon}){\rm d}t+\sigma_0h_\varepsilon(|Z_t^{i,N,\varepsilon}|){\rm d}\widehat B_t^{1,i}+ \sigma_0h_\varepsilon^*(|Z_t^{i,N,\varepsilon}|){\rm d}B_t^{2,i}+ \sigma(Y_{\overline{\theta}_t}^{i,N,\varepsilon},\widetilde{\mu}_{\overline{\theta}_t}^{N,\varepsilon}){\rm d}\overline B_t^i\notag\\
&+\sigma_1h_\varepsilon(|Z_t^{i,N,\varepsilon}|){\rm d}\widehat W_t^{1}+\sigma_1h_\varepsilon^*(|Z_t^{i,N,\varepsilon}|){\rm d}W_t^{2}+\overline\sigma(Y_{\overline{\theta}_t}^{i,N,\varepsilon},\widetilde{\mu}_{\overline{\theta}_t}^{N,\varepsilon}){\rm d}\overline{W}_t,
\end{align}
with the initial value $Y_0^{i,N,\varepsilon}=X_0^{i,N}$. Then, by following an analogous procedure above, we deduce that the conditional distributions of $({\bf Y}_t^{N,N,\varepsilon})_{t\ge 0}$ and $({\bf X}_t^{N,N})_{t\ge 0}$ given $\mathcal{F}^W$ are identical. 
\end{proof}

\begin{lemma}\label{boundedness of Y_t^i,varepsilon}
Suppose that $\mathbb{E}|X_0^i|^q<\infty$ for all $i\in\mathbb{S}_N$ and some $q>1$. Let Assumptions \ref{A1} and \ref{A2}  hold with $\lambda_1>2\lambda_2$, then there exists $C>0$ such that for any $L\in[0,L^*]$, $i\in\mathbb{S}_N$ and $\varepsilon>0$,
 \begin{align*}
\sup_{t\ge 0} \mathbb{E}|Y_t^{i,\varepsilon}|^q\le \mathbb{E}|X_0^i|^q+C,
 \end{align*}
 where $L^*$ was defined in Lemma \ref{CPoC}.
\end{lemma}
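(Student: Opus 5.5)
The quickest route is to avoid a new moment computation and transfer the bound already obtained in the proof of Lemma \ref{CPoC}. By Lemma \ref{coupling process-lemma}, for every $\varepsilon>0$ the process $(Y_t^{i,\varepsilon})_{t\ge 0}$ has the same conditional law given $\mathcal{F}^W$ as $(X_t^i)_{t\ge 0}$. Integrating over $\mathcal{F}^W$ then shows that $Y_t^{i,\varepsilon}$ and $X_t^i$ have the same law. Hence $\mathbb{E}|Y_t^{i,\varepsilon}|^q=\mathbb{E}|X_t^i|^q$, and the bound \eqref{X_t_1} from Lemma \ref{CPoC} gives $\sup_{t\ge0}\mathbb{E}|Y_t^{i,\varepsilon}|^q\le \mathbb{E}|X_0^i|^q+C_0$. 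The constant $C=C_0$ depends neither on $\varepsilon$ nor on $i$.

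Because Lemma \ref{coupling process-lemma} presupposes weak well-posedness, I would also give a direct argument as a back-up. This repeats the It\^o computation of Lemma \ref{CPoC}, now applied to ${\rm e}^{\gamma_0 t}|Y_t^{i,\varepsilon}|^q$ using the rewritten equation \eqref{(2.2)}. There $\widetilde B^i$ and $\widetilde W$ are Brownian motions, because $h_\varepsilon^2+(h_\varepsilon^*)^2=1$. Their quadratic variations are therefore $d\sigma_0^2\,{\rm d}t$ and $d\sigma_1^2\,{\rm d}t$, which are exactly the terms appearing in Lemma \ref{CPoC}. Consequently the generator estimate is the same as before, with $\gamma_0=q(K_1-\lambda_2/2-4qL)>0$ for $L\le L^*$.

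The drift is handled by Remark \ref{Remark-1.1}, \eqref{B1}, together with the growth bounds \eqref{B2} and \eqref{B3}. In the measure terms I would use $\mathbb{W}_1^2(\widehat\mu_s^{i,\varepsilon},\delta_0)\le \mathbb{E}(|Y_s^{i,\varepsilon}|^2\,|\,\mathcal{F}_s^W)$. Then Young's inequality and the tower property give $\mathbb{E}\bigl[|Y_s|^{q-2}\mathbb{E}(|Y_s|^2\,|\,\mathcal{F}^W_s)\bigr]\le \mathbb{E}|Y_s|^q$, via Hölder's inequality conditionally on $\mathcal{F}^W_s$. The remaining constant-times-$|Y_s|^{q-2}$ terms are absorbed into the $|Y_s|^q$ coefficient, up to an additive constant $C_0$. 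The result is $\mathbb{E}\,{\rm e}^{\gamma_0 t}|Y_t^{i,\varepsilon}|^q\le \mathbb{E}|X_0^i|^q+C_0{\rm e}^{\gamma_0 t}$, from which the claim follows.

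For the direct argument, two technical points need care. First, the stochastic integrals must be true martingales. I would handle this by localizing with stopping times $\tau_n=\inf\{t:|Y_t^{i,\varepsilon}|\ge n\}$ and then applying Fatou's lemma. Second, the case $1<q\le2$ needs its own treatment. There I would first prove the $q=2$ bound, or apply It\^o to $(1+|Y|^2)^{q/2}$, and then conclude by Jensen's inequality. Neither point involves $\varepsilon$, so the resulting bound is uniform in $\varepsilon$.
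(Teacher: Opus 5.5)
Your back-up argument is the paper's proof. The paper only notes that $h_\varepsilon^2+(h_\varepsilon^*)^2=1$, so the additive noise in \eqref{coupling process} has quadratic variation $(\sigma_0^2+\sigma_1^2)I_{d\times d}\,{\rm d}t$. It then repeats the It\^o computation behind \eqref{X_t_1}.

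Your primary ``transfer'' route is genuinely different, but you should drop it rather than lead with it. It rests entirely on Lemma \ref{coupling process-lemma}, and that identification of laws fails for this coupling. In \eqref{(2.2)}, $\widetilde W$ is built from $h_\varepsilon(|Z_s^{i,N,\varepsilon}|)$, which depends on $\omega^B$ (and on $i$). So $\widetilde W$ is not $\mathcal{F}^W$-measurable, and $\widehat\mu_t^{i,\varepsilon}=\mathcal{L}(Y_t^{i,\varepsilon}\vert\mathcal{F}_t^W)$ is not the conditional law given the common noise that actually drives \eqref{(2.2)}. Weak uniqueness of \eqref{NIPS} therefore does not apply.

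Concretely, take $d=1$, $\sigma=\overline\sigma=0$, and $b(x,\mu)=-x+c\int y\,\mu({\rm d}y)$ with $0<c<1/2$.
\begin{itemize}
\item $\mathbb{E}[X_t^i\vert\mathcal{F}_t^W]$ has martingale part $\sigma_1 W_t$.
\item $\mathbb{E}[Y_t^{i,\varepsilon}\vert\mathcal{F}_t^W]$ has martingale part $\sigma_1\int_0^t(\mathbb{E}[h_\varepsilon\vert\mathcal{F}_s^W]{\rm d}W_s^1+\mathbb{E}[h_\varepsilon^*\vert\mathcal{F}_s^W]{\rm d}W_s^2)$, with $h_\varepsilon=h_\varepsilon(|Z_s^{i,N,\varepsilon}|)$.
\item The rate of the latter is strictly below $\sigma_1^2$ wherever $h_\varepsilon(|Z_s^{i,N,\varepsilon}|)$ is not conditionally deterministic given $\mathcal{F}_s^W$. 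This happens, e.g., for small $s$ when $|X_0^i-X_0^{i,N}|$ charges both $[0,\varepsilon]$ and $[2\varepsilon,\infty)$.
\end{itemize}
Means coincide, and a variance computation gives $\mathbb{E}|Y_t^{i,\varepsilon}|^2<\mathbb{E}|X_t^i|^2$ for $t>0$. So the equality $\mathbb{E}|Y_t^{i,\varepsilon}|^q=\mathbb{E}|X_t^i|^q$ you rely on fails in general.

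The direct computation needs none of this. It only uses that $\widehat\mu_s^{i,\varepsilon}$ is the conditional law of $Y_s^{i,\varepsilon}$ itself. Then $\mathbb{W}_1(\widehat\mu_s^{i,\varepsilon},\delta_0)=\mathbb{E}[|Y_s^{i,\varepsilon}|\,\vert\,\mathcal{F}_s^W]$, and conditional Jensen plus the tower property apply. One caveat: the measure term does not localize, because $\mathbb{E}\,\mathbb{W}_1^q(\widehat\mu_s^{i,\varepsilon},\delta_0)$ is bounded by the \emph{unstopped} $\mathbb{E}|Y_s^{i,\varepsilon}|^q$. Your $\tau_n$/Fatou step therefore needs a priori finiteness of $\mathbb{E}|Y_s^{i,\varepsilon}|^q$ on compact time intervals.

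For $1<q<2$ there is a gap, which the paper shares: its computation for \eqref{X_t_1} is written only for $q>2$. Neither of your fixes closes it.
\begin{itemize}
\item Proving the $q=2$ bound first needs $\mathbb{E}|X_0^i|^2<\infty$, which is not assumed. Jensen then yields $(\mathbb{E}|X_0^i|^2+C)^{q/2}$, not $\mathbb{E}|X_0^i|^q+C$.
\item With $(1+|Y|^2)^{q/2}$, the weight $(1+|Y|^2)^{(q-2)/2}$ has a negative exponent, so the Young step trading $|Y|^{q-2}\mathbb{W}_1^2$ for $|Y|^q$ is unavailable.
\end{itemize}
For the drift you can sidestep this by keeping $\lambda_2|x|\mathbb{W}_1(\mu,\delta_0)$ unsquared, as in the first line of \eqref{B1}. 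The diffusion is the real obstruction. The term $2qL(1+|Y_s|^2)^{(q-2)/2}\mathbb{W}_1^2(\widehat\mu_s^{i,\varepsilon},\delta_0)$ coming from \eqref{B2}--\eqref{B3} cannot be bounded by $C\,\mathbb{E}(1+|Y_s|^2)^{q/2}+C'$. A conditional law with mass $1/2$ at $0$ and $1/2$ at distance $M$ makes it of order $LM^2$, against $M^q$ for both the Lyapunov function and the dissipation. So this range needs an extra ingredient, or the statement should be restricted to $q\ge 2$.
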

\begin{proof}
Note that $h_{\varepsilon}(r)^2+h_{\varepsilon}^*(r)^2=1$ for all $r>0$, by following an analogous procedure as the proof of \eqref{X_t_1}, we can get the assertation.
\end{proof}

%========================================================================
\subsection{Proof of Theorem \ref{main theorem}}\label{proof of the key estimate}

\begin{proof}[Proof of Theorem \ref{main theorem}]
For any $\mu,\nu\in\mathcal{P}_1(\mathbb{R}^d)$, we choose $(X_0^i, X_0^{i,N})_{i\in\mathbb{S}_N}$ such that
\begin{align}\label{W_1(mu,nu_0)}
\mathbb{W}_1(\mu,\nu)=\mathbb{E}\vert X_0^i-X_0^{i,N}\vert,\quad i\in\mathbb{S}_N.
\end{align}
Since $Z_t^{i,N,\varepsilon}=Y_t^{i,\varepsilon}-Y_t^{i,N,\varepsilon}$, by \eqref{coupling process},
\begin{align*}
\d   Z_t^{i,N,\varepsilon}=&\left(b(Y_t^{i,\varepsilon},\widehat{\mu}_t^{i,\varepsilon})-\widetilde{b}(Y_{\theta_t}^{i,N,\varepsilon},\widetilde{\mu}_{\overline{\theta}_t}^{N,\varepsilon})\right){\rm d}t+h_\varepsilon(|Z_t^{i,N,\varepsilon}|)\left(I_{d\times d}-\Pi(Z_t^{i,N,\varepsilon})\right)\left(\sigma_0{\rm d}B_t^{1,i}+\sigma_1{\rm d}W_t^{1}\right)\\
&+\left(\sigma(Y_t^{i,\varepsilon},\widehat{\mu}_t^{i,\varepsilon})-\sigma(Y_{\overline{\theta}_t}^{i,N,\varepsilon},\widetilde{\mu}_{\overline{\theta}_t}^{N,\varepsilon})\right){\rm d}\overline B_t^i+\left(\overline\sigma(Y_t^{i,\varepsilon},\widehat{\mu}_t^{i,\varepsilon})-\overline\sigma(Y_{\overline{\theta}_t}^{i,N,\varepsilon},\widetilde{\mu}_{\overline{\theta}_t}^{N,\varepsilon})\right){\rm d}\overline{W}_t.
\end{align*}
Applying It\^o's formula, for any $i\in\mathbb{S}_N$, we have
\begin{align}\label{(3.11)}
\d   |Z_t^{i,N,\varepsilon}|^2=&\left(2\left\<Z_t^{i,N,\varepsilon},b(Y_t^{i,\varepsilon},\widehat{\mu}_t^{i,\varepsilon})-\widetilde{b}(Y_{\theta_t}^{i,N,\varepsilon},\widetilde{\mu}_{\overline{\theta}_t}^{N,\varepsilon})\right\>+4(\sigma_0^2+\sigma_1^2)h_\varepsilon(|Z_t^{i,N,\varepsilon}|)^2\right){\rm d}t\notag\\
&+\left(|\sigma(Y_t^{i,\varepsilon},\widehat{\mu}_t^{i,\varepsilon})-\sigma(Y_{\overline{\theta}_t}^{i,N,\varepsilon},\widetilde{\mu}_{\overline{\theta}_t}^{N,\varepsilon})|^2+|\overline\sigma(Y_t^{i,\varepsilon},\widehat{\mu}_t^{i,\varepsilon})-\overline\sigma(Y_{\overline{\theta}_t}^{i,N,\varepsilon},\widetilde{\mu}_{\overline{\theta}_t}^{N,\varepsilon})|^2\right){\rm d}t\notag\\
&+2\left\<Z_t^{i,N,\varepsilon}, h_\varepsilon(|Z_t^{i,N,\varepsilon}|)\left(I_{d\times d}-\Pi(Z_t^{i,N,\varepsilon})\right)\left(\sigma_0{\rm d}B_t^{1,i}+\sigma_1{\rm d}W_t^{1}\right)\right\>\notag\\
&+2\left\<Z_t^{i,N,\varepsilon},\sigma(Y_t^{i,\varepsilon},\widehat{\mu}_t^{i,\varepsilon})-\sigma(Y_{\overline{\theta}_t}^{i,N,\varepsilon},\widetilde{\mu}_{\overline{\theta}_t}^{N,\varepsilon})\right\>{\rm d}\overline B_t^i\notag\\
&+2\left\<Z_t^{i,N,\varepsilon},\overline\sigma(Y_t^{i,\varepsilon},\widehat{\mu}_t^{i,\varepsilon})-\overline\sigma(Y_{\overline{\theta}_t}^{i,N,\varepsilon},\widetilde{\mu}_{\overline{\theta}_t}^{N,\varepsilon})\right\>{\rm d}\overline{W}_t.
\end{align}
Below, we define a $C^2$-function 
$$f(r)=1-{\rm e}^{-c_1r}+c_2r,\quad r\ge 0,$$
where $c_1=\frac{2\lambda_1l_0}{\sigma_0^2+\sigma_1^2}, c_2=c_1{\rm e}^{-c_1l_0}$. 

Choose $\lambda_2^*= \frac{(2\phi(l_0)\wedge\lambda_1l_0)c_1l_0}{(1+{\rm e}^{c_1l_0})({\rm e}^{c_1l_0}-1+c_2l_0{\rm e}^{c_1l_0})}>0$, we can arrive at for any $\lambda_2\in[0,\lambda_2^*)$, 
$$\lambda_0:=\frac{(2\phi(l_0)\wedge\lambda_1l_0)l_0c_2}{1-{\rm e}^{-c_1l_0}+c_2l_0}-\frac{\lambda_2(c_1+c_2)}{c_2}>0.$$
Moreover, we define $F(r)=f(r^{1/2}), r\ge 0$. Applying It\^o-Tanaka's  formula(\cite[Theorem 29.5]{Kallenberg2021book}) yields that for all $t>0$,
\begin{align*}
{\rm e}^{\lambda_0t}F(|Z_t^{i,N,\varepsilon}|^2)=&F(|Z_0^{i,N,\varepsilon}|^2)+\lambda_0\int_0^t{\rm e}^{\lambda_0s}F(|Z_s^{i,N,\varepsilon}|^2){\rm d}s\\
&+\int_0^t{\rm e}^{\lambda_0s}F'_-(|Z_s^{i,N,\varepsilon}|^2){\rm d}|Z_s^{i,N,\varepsilon}|^2+\frac{1}{2}\int_0^t{\rm e}^{\lambda_0s}\int_0^{\infty}{\rm d}L_s^{i,N,\varepsilon,x}\mu_F({\rm d}x),
\end{align*}
where $F'_-$ means the left derivative of $F$, $(L_t^{i,N,\varepsilon,x})_{t\ge 0}$ is the local time of $(|Z_t^{i,N,\varepsilon}|^2)_{t\ge 0}$ at point $x$, and $\mu_F$ denotes the Lebesgue-Stieltjes measure associated with the left derivative $F'_-$ (i.e., $\mu_F([a,b))=F'_-(b)-F'_-(a)$ for $a\le b$). Denote by $F''$ the almost everywhere defined second derivative of the function $F$. Since $\mu_F({\rm d}x)\le F''(x){\rm d}x$ (thanks to the fact that $F'_-$ is non-increasing) and $t\mapsto L_t^{i,N,\varepsilon,x}$ is increasing, we infer that
\begin{align}\label{(3.12)}
{\rm e}^{\lambda_0t}F(|Z_t^{i,N,\varepsilon}|^2)\le&F(|Z_0^{i,N,\varepsilon}|^2)+\lambda_0\int_0^t{\rm e}^{\lambda_0s}F(|Z_s^{i,N,\varepsilon}|^2){\rm d}s\notag\\
&+\int_0^t{\rm e}^{\lambda_0s}F'_-(|Z_s^{i,N,\varepsilon}|^2){\rm d}|Z_s^{i,N,\varepsilon}|^2+\frac{1}{2}\int_0^t{\rm e}^{\lambda_0s}\int_0^{\infty}{\rm d}L_s^{i,N,\varepsilon,x}F''(x){\rm d}x,
\end{align}
Next, by the chain rule and Fubini's theorem, in addition to the occupation time formula(see \cite[Theorem 29.5]{Kallenberg2021book}), we can deduce that
\begin{align*}
&\int_0^t{\rm e}^{\lambda_0s}\int_0^{\infty}{\rm d}L_s^{i,N,\varepsilon,x}F''(x){\rm d}x\\
=&\int_0^{\infty}\left(\int_0^t{\rm e}^{\lambda_0s}{\rm d}L_s^{i,N,\varepsilon,x}\right)F''(x){\rm d}x\\
=&\int_0^{\infty}\left({\rm e}^{\lambda_0t}L_t^{i,N,\varepsilon,x}-\lambda_0\int_0^t{\rm e}^{\lambda_0s}L_s^{i,N,\varepsilon,x}{\rm d}s\right)F''(x){\rm d}x\\
=&{\rm e}^{\lambda_0t}\int_0^t F''(|Z_s^{i,N,\varepsilon}|^2){\rm d}\<|Z^{i,N,\varepsilon}|^2\>_s-\lambda_0\int_0^t {\rm e}^{\lambda_0s}\left(\int_0^s F''(|Z_u^{i,N,\varepsilon}|^2){\rm d}\<|Z^{i,N,\varepsilon}|^2\>_u\right){\rm d}s\\
=&{\rm e}^{\lambda_0t}\int_0^t F''(|Z_s^{i,N,\varepsilon}|^2){\rm d}\<|Z^{i,N,\varepsilon}|^2\>_s-\lambda_0\int_0^t \left(\int_u^t{\rm e}^{\lambda_0s}{\rm d}s \right)F''(|Z_u^{i,N,\varepsilon}|^2){\rm d}\<|Z^{i,N,\varepsilon}|^2\>_u\\
=&\int_0^t {\rm e}^{\lambda_0s}F''(|Z_s^{i,N,\varepsilon}|^2){\rm d}\<|Z^{i,N,\varepsilon}|^2\>_s,
\end{align*}
where $(\<|Z^{i,N,\varepsilon}|^2\>_t)_{t\ge 0}$ stands for the quadratic variation process of $(|Z_t^{i,N,\varepsilon}|^2)_{t\ge 0}$. Plugging the preceding identity into \eqref{(3.12)} enables us to deduce that
\begin{align}\label{(3.13)}
{\rm e}^{\lambda_0t}F(|Z_t^{i,N,\varepsilon}|^2)=&F(|Z_0^{i,N,\varepsilon}|^2)+\lambda_0\int_0^t{\rm e}^{\lambda_0s}F(|Z_s^{i,N,\varepsilon}|^2){\rm d}s\notag\\
&+\int_0^t{\rm e}^{\lambda_0s}F'_-(|Z_s^{i,N,\varepsilon}|^2){\rm d}|Z_s^{i,N,\varepsilon}|^2+\frac{1}{2}\int_0^t {\rm e}^{\lambda_0s}F''(|Z_s^{i,N,\varepsilon}|^2){\rm d}\<|Z^{i,N,\varepsilon}|^2\>_s.
\end{align}
Note that for $r>0$,
$$F'(r)=\frac{1}{2}f'(r^{1/2})r^{-1/2},\quad F''(r)=\frac{1}{4}f''(r^{1/2})r^{-1}-\frac{1}{4}f'(r^{1/2})r^{-3/2},$$%=\frac{1}{4}f''(r^{1/2})r^{-1}-\frac{1}{2}F'(r)r^{-1}
and that
\begin{align*}
{\rm d}\<|Z^{i,N,\varepsilon}|^2\>_t=&\Bigg[16(\sigma_0^2+\sigma_1^2)h_\varepsilon(|Z_t^{i,N,\varepsilon}|)^2\left|(Z_t^{i,N,\varepsilon})^{\rm T}{\bf e}(Z_t^{i,N,\varepsilon})\otimes{\bf e}(Z_t^{i,N,\varepsilon})\right|^2\\
&+4\left|(Z_t^{i,N,\varepsilon})^{\rm T}\left(\sigma(Y_t^{i,\varepsilon},\widehat{\mu}_t^{i,\varepsilon})-\sigma(Y_{\overline{\theta}_t}^{i,N,\varepsilon},\widetilde{\mu}_{\overline{\theta}_t}^{N,\varepsilon})\right)\right|^2\\
&+4\left|(Z_t^{i,N,\varepsilon})^{\rm T}\left(\overline\sigma(Y_t^{i,\varepsilon},\widehat{\mu}_t^{i,\varepsilon})-\overline\sigma(Y_{\overline{\theta}_t}^{i,N,\varepsilon},\widetilde{\mu}_{\overline{\theta}_t}^{N,\varepsilon})\right)\right|^2\Bigg]{\rm d}t\\
\le&4|Z_t^{i,N,\varepsilon}|^2\bigg[4(\sigma_0^2+\sigma_1^2)h_\varepsilon(|Z_t^{i,N,\varepsilon}|)^2+\left|\sigma(Y_t^{i,\varepsilon},\widehat{\mu}_t^{i,\varepsilon})-\sigma(Y_{\overline{\theta}_t}^{i,N,\varepsilon},\widetilde{\mu}_{\overline{\theta}_t}^{N,\varepsilon})\right|^2\\
&\quad\qquad\qquad+\left|\overline\sigma(Y_t^{i,\varepsilon},\widehat{\mu}_t^{i,\varepsilon})-\overline\sigma(Y_{\overline{\theta}_t}^{i,N,\varepsilon},\widetilde{\mu}_{\overline{\theta}_t}^{N,\varepsilon})\right|^2\bigg]{\rm d}t.
\end{align*}
Whence, along with \eqref{(3.11)} and \eqref{(3.13)} as well as $f''(r)=-c_1^2{\rm e}^{-c_1r}<0$, we derive that
\begin{align}\label{(3.14)}
&{\rm e}^{\lambda_0t}f(|Z_t^{i,N,\varepsilon}|)\notag\\
&=f(|Z_0^{i,N,\varepsilon}|)+\lambda_0\int_0^t{\rm e}^{\lambda_0s}f(|Z_s^{i,N,\varepsilon}|){\rm d}s+\frac{1}{2}\int_0^t{\rm e}^{\lambda_0s}f'(|Z_s^{i,N,\varepsilon}|)|Z_s^{i,N,\varepsilon}|^{-1}{\rm d}|Z_s^{i,N,\varepsilon}|^2\notag\\
&\quad+\frac{1}{2}\int_0^t {\rm e}^{\lambda_0s}\left(f''(|Z_s^{i,N,\varepsilon}|)-f'(|Z_s^{i,N,\varepsilon}|)|Z_s^{i,N,\varepsilon}|^{-1}\right)\notag\\
&\quad\qquad\times\Big(4(\sigma_0^2+\sigma_1^2)h_\varepsilon(|Z_s^{i,N,\varepsilon}|)^2+\left|\sigma(Y_s^{i,\varepsilon},\widehat{\mu}_s^{i,\varepsilon})-\sigma(Y_{\overline{\theta}_s}^{i,N,\varepsilon},\widetilde{\mu}_{\overline{\theta}_s}^{N,\varepsilon})\right|^2+\left|\overline\sigma(Y_s^{i,\varepsilon},\widehat{\mu}_s^{i,\varepsilon})-\overline\sigma(Y_{\overline{\theta}_s}^{i,N,\varepsilon},\widetilde{\mu}_{\overline{\theta}_s}^{N,\varepsilon})\right|^2\Big){\rm d}s\notag\\
&\le f(|Z_0^{i,N,\varepsilon}|)+\int_0^t{\rm e}^{\lambda_0s}\Big(\lambda_0f(|Z_s^{i,N,\varepsilon}|)+2f''(|Z_s^{i,N,\varepsilon}|)(\sigma_0^2+\sigma_1^2)h_\varepsilon(|Z_s^{i,N,\varepsilon}|)^2\Big){\rm d}s\notag\\
&\quad+\int_0^t{\rm e}^{\lambda_0s}f'(|Z_s^{i,N,\varepsilon}|)|Z_s^{i,N,\varepsilon}|^{-1}\left\<Z_s^{i,N,\varepsilon},b(Y_s^{i,\varepsilon},\widehat{\mu}_s^{i,\varepsilon})-\widetilde{b}(Y_{\theta_s}^{i,N,\varepsilon},\widetilde{\mu}_{\overline{\theta}_s}^{N,\varepsilon})\right\>{\rm d}s+M_t^{i,N,\varepsilon},
%&+2\int_0^t{\rm e}^{\lambda_0s}f'(|Z_s^{i,N,\varepsilon}|)|Z_s^{i,N,\varepsilon}|^{-1}h_\varepsilon(\|{\bf Z}_s^{N,N,\varepsilon}\|_1)^2\sigma_0^2{\rm d}s\notag\\
%&+\frac{1}{2}\int_0^t{\rm e}^{\lambda_0s}f'(|Z_s^{i,N,\varepsilon}|)|Z_s^{i,N,\varepsilon}|^{-1}|(\sigma\ast\widehat{\mu}_t^{i,\varepsilon})(Y_t^{i,\varepsilon})-(\sigma\ast\widetilde{\mu}_t^{N,\varepsilon})(Y_t^{i,N,\varepsilon})|^2{\rm d}t\notag\\
%&+\frac{1}{2}\int_0^t {\rm e}^{\lambda_0s}\left(f''(|Z_s^{i,N,\varepsilon}|)-f'(|Z_s^{i,N,\varepsilon}|)|Z_s^{i,N,\varepsilon}|^{-1}\right)\left|(\sigma\ast\widehat{\mu}_s^{i,\varepsilon})(Y_s^{i,\varepsilon})-(\sigma\ast\widetilde{\mu}_s^{N,\varepsilon})(Y_s^{i,N,\varepsilon})\right|^2{\rm d}s\notag\\
\end{align}
where 
\begin{align*}
M_t^{i,N,\varepsilon}=&2\int_0^t{\rm e}^{\lambda_0s}f'(|Z_s^{i,N,\varepsilon}|)|Z_s^{i,N,\varepsilon}|^{-1}\left\<Z_s^{i,N,\varepsilon},h_\varepsilon(|Z_s^{i,N,\varepsilon}|){\bf e}(Z_s^{i,N,\varepsilon})\otimes {\bf e}(Z_s^{i,N,\varepsilon})(\sigma_0{\rm d}B_s^{1,i}+\sigma_1{\rm d}W_s^{1})\right\>\notag\\
&+\int_0^t{\rm e}^{\lambda_0s}f'(|Z_s^{i,N,\varepsilon}|)|Z_s^{i,N,\varepsilon}|^{-1}\left\<Z_s^{i,N,\varepsilon},\sigma(Y_s^{i,\varepsilon},\widehat{\mu}_s^{i,\varepsilon})-\sigma(Y_{\overline{\theta}_s}^{i,N,\varepsilon},\widetilde{\mu}_{\overline{\theta}_s}^{N,\varepsilon})\right\>{\rm d}\overline B_s^i\notag\\
&+\int_0^t{\rm e}^{\lambda_0s}f'(|Z_s^{i,N,\varepsilon}|)|Z_s^{i,N,\varepsilon}|^{-1}\left\<Z_s^{i,N,\varepsilon},\overline\sigma(Y_s^{i,\varepsilon},\widehat{\mu}_s^{i,\varepsilon})-\overline\sigma(Y_{\overline{\theta}_s}^{i,N,\varepsilon},\widetilde{\mu}_{\overline{\theta}_s}^{N,\varepsilon})\right\>{\rm d}\overline{W}_s.
\end{align*}
Let $\overline\mu_t^{N,\varepsilon}=\frac{1}{N}\sum_{j=1}^N\delta_{Y_t^{j,\varepsilon}}$, using Assumption \ref{A1}, and recall that $\widetilde{\mu}_t^{N,\varepsilon}=\frac{1}{N}\sum_{j=1}^N\delta_{Y_t^{j,N,\varepsilon}}$, it follows 
\begin{align}\label{(3.15)}
&|Z_t^{i,N,\varepsilon}|^{-1}\left\<Z_t^{i,N,\varepsilon},b(Y_t^{i,\varepsilon},\widehat{\mu}_t^{i,\varepsilon})-\widetilde{b}(Y_{\theta_t}^{i,N,\varepsilon},\widetilde{\mu}_{\overline{\theta}_t}^{N,\varepsilon})\right\>\notag\\
&\le |Z_t^{i,N,\varepsilon}|^{-1}\left\<Z_t^{i,N,\varepsilon},b(Y_t^{i,\varepsilon},\widehat{\mu}_t^{i,\varepsilon})-b(Y_t^{i,N,\varepsilon},\widehat{\mu}_t^{i,\varepsilon})\right\>+\left|b(Y_t^{i,N,\varepsilon},\widehat{\mu}_t^{i,\varepsilon})-b(Y_t^{i,N,\varepsilon},\overline{\mu}_t^{N,\varepsilon})\right|\notag\\
&\quad+\left|b(Y_t^{i,N,\varepsilon},\overline{\mu}_t^{N,\varepsilon})-b(Y_t^{i,N,\varepsilon},\widetilde{\mu}_t^{N,\varepsilon})\right|+\left|b(Y_t^{i,N,\varepsilon},\widetilde{\mu}_t^{N,\varepsilon})-\widetilde{b}(Y_{\theta_t}^{i,N,\varepsilon},\widetilde{\mu}_{\overline{\theta}_t}^{N,\varepsilon})\right|\notag\\
&\le \phi(\vert Z_t^{i,N,\varepsilon}\vert)\mathbbm{1}_{\{|Z_t^{i,N,\varepsilon}|\le l_0\}}-\lambda_1|Z_t^{i,N,\varepsilon}|\mathbbm{1}_{\{|Z_t^{i,N,\varepsilon}|> l_0\}}+\lambda_2\mathbb{W}_1(\widehat{\mu}_t^{i,\varepsilon},\overline{\mu}_t^{N,\varepsilon})+\frac{\lambda_2}{N}\sum_{j=1}^N|Z_t^{j,N,\varepsilon}|\notag\\
&\quad+|b(Y_t^{i,N,\varepsilon},\widetilde{\mu}_t^{N,\varepsilon})-\widetilde{b}(Y_{\theta_t}^{i,N,\varepsilon},\widetilde{\mu}_{\overline{\theta}_t}^{N,\varepsilon})|.
\end{align}
Plugging \eqref{(3.15)} back into \eqref{(3.14)} yields that
\begin{align*}
&{\rm d}\left({\rm e}^{\lambda_0t}f(|Z_t^{i,N,\varepsilon}|)\right)\notag\\
&\le {\rm e}^{\lambda_0t}\Bigg\{\lambda_0f(|Z_t^{i,N,\varepsilon}|)+\varphi(|Z_t^{i,N,\varepsilon}|)h_\varepsilon(|Z_t^{i,N,\varepsilon}|)^2+\Upsilon(|Z_t^{i,N,\varepsilon}|)\notag\\
&\qquad+f'(|Z_t^{i,N,\varepsilon}|)\left(\lambda_2\mathbb{W}_1(\widehat{\mu}_t^{i,\varepsilon},\overline{\mu}_t^{N,\varepsilon})+\frac{\lambda_2}{N}\sum_{j=1}^N|Z_t^{j,N,\varepsilon}|+|b(Y_t^{i,N,\varepsilon},\widetilde{\mu}_t^{N,\varepsilon})-\widetilde{b}(Y_{\theta_t}^{i,N,\varepsilon},\widetilde{\mu}_{\overline{\theta}_t}^{N,\varepsilon})|\right)\Bigg\}{\rm d}t\notag\\
&\quad+\d M_t^{i,N,\varepsilon},
\end{align*}
where 
\begin{align*}
\varphi(r):=&2f''(r)(\sigma_0^2+\sigma_1^2)+f'(r)\left(\phi(r)\mathbbm{1}_{\{r\le l_0\}}-\lambda_1r\mathbbm{1}_{\{r> l_0\}}\right)r,\notag\\
\Upsilon(r):=&f'(r)\left(\phi(r)\mathbbm{1}_{\{r\le l_0\}}-\lambda_1r\mathbbm{1}_{\{r> l_0\}}\right)\left(1-h_\varepsilon(r)^2\right)r.
\end{align*}
Below, for the case $0\le r\le l_0$ and the case $r>l_0$, we aim to verify respectively that 
\begin{align}\label{varphi^*(r)}
\varphi(r)\le -\lambda^*f(r), \quad r\ge 0,
\end{align}
where $\lambda^*=\frac{(2\phi(l_0)\wedge\lambda_1l_0)l_0c_2}{1-{\rm e}^{-c_1l_0}+c_2l_0}>0$. By virtue of 
 $$f'(r)=c_1{\rm e}^{-c_1r}+c_2, \quad f''(r)=-c_1^2{\rm e}^{-c_1r},\quad r\ge 0,$$
 we have
 $$\varphi(r)=-2c_1^2(\sigma_0^2+\sigma_1^2){\rm e}^{-c_1r}+(c_1{\rm e}^{-c_1r}+c_2)\left[\phi(r)\mathbbm{1}_{\{r\le l_0\}}-\lambda_1r\mathbbm{1}_{\{r> l_0\}}\right]r.$$
 For the case $0\le r\le l_0$, note that $c_1=\frac{2l_0\phi(l_0)}{\sigma_0^2+\sigma_1^2}, c_2=c_1{\rm e}^{-c_1l_0}\le c_1{\rm e}^{-c_1r}$, 
 \begin{align}\label{varphi^*(r)_1}
 \varphi(r)\le&-2c_1^2(\sigma_0^2+\sigma_1^2){\rm e}^{-c_1r}+(c_1{\rm e}^{-c_1r}+c_2)l_0\phi(l_0)\notag\\
 \le&-c_1{\rm e}^{-c_1r}\left(2c_1(\sigma_0^2+\sigma_1^2)-2l_0\phi(l_0)\right)\notag\\
 =&-c_1^2{\rm e}^{-c_1r}(\sigma_0^2+\sigma_1^2)\notag\\
 \le &-\frac{c_1^2{\rm e}^{-c_1l_0}(\sigma_0^2+\sigma_1^2)}{f(l_0)}f(r)\notag\\
 =&-\frac{2c_2l_0\phi(l_0)}{1-{\rm e}^{-c_1l_0}+c_2l_0}f(r),
\end{align}
where the last inequality is valid due to the fact that $r\mapsto f(r)$ is increasing on $[0,\infty)$.
On the other hand, for $r>l_0$, using the fact that $r\mapsto \frac{r}{f(r)}$ is increasing on the interval $[0,\infty)$, we obtain that
\begin{align}\label{varphi^*(r)_2}
\varphi(r)=&-2c_1^2(\sigma_0^2+\sigma_1^2){\rm e}^{-c_1r}-\lambda_1r(c_1{\rm e}^{-c_1r}+c_2)r\le -\lambda_1c_2r^2\le -\frac{\lambda_1c_2l_0^2}{1-{\rm e}^{-c_1l_0}+c_2l_0}f(r).
\end{align}
  Hence, \eqref{varphi^*(r)} follows by combining \eqref{varphi^*(r)_1} with \eqref{varphi^*(r)_2}.  In the sequel, invoking \eqref{varphi^*(r)} and taking $c_2\le f'(r)\le c_1+c_2, r\ge 0$ and $f(0)=0$ into account leads to
  \begin{align*}
&{\rm e}^{\lambda_0t}\mathbb{E}f(|Z_t^{i,N,\varepsilon}|)\\
&\le\mathbb{E}f(|Z_0^{i,N,\varepsilon}|)+\int_0^t{\rm e}^{\lambda_0s}\Bigg\{-C_1\left(\mathbb{E}f(|Z_s^{i,N,\varepsilon}|)-\frac{1}{N}\sum_{j=1}^N\mathbb{E}f(|Z_s^{j,N,\varepsilon}|)\right)\notag\\
&\qquad\qquad\qquad\qquad\qquad+\lambda^*\mathbb{E}\left[f(|Z_s^{i,N,\varepsilon}|)(1-h_\varepsilon(|Z_s^{i,N,\varepsilon}|)^2)\right]+\mathbb{E}\Upsilon(|Z_s^{i,N,\varepsilon}|)\notag\\
&\qquad\qquad\qquad\qquad\qquad+(c_1+c_2)\left(\lambda_2\mathbb{E}\mathbb{W}_1(\widehat{\mu}_s^{i,\varepsilon},\overline{\mu}_s^{N,\varepsilon})+\mathbb{E}|b(Y_s^{i,N,\varepsilon},\widetilde{\mu}_s^{N,\varepsilon})-\widetilde{b}(Y_{\theta_s}^{i,N,\varepsilon},\widetilde{\mu}_{\overline{\theta}_s}^{N,\varepsilon})|\right)\Bigg\}{\rm d}s,
\end{align*}
  where $C_1=\frac{\lambda_2(c_1+c_2)}{c_2}>0$, and then
\begin{align*}
&{\rm e}^{\lambda_0t}\frac{1}{N}\sum_{i=1}^N\mathbb{E}f(|Z_t^{i,N,\varepsilon}|)\notag\\
\le &\frac{1}{N}\sum_{i=1}^N\mathbb{E}f(|Z_0^{i,N,\varepsilon}|)+\int_0^t{\rm e}^{\lambda_0s}\mathbb{E}\left[\lambda^*\frac{1}{N}\sum_{i=1}^Nf(|Z_s^{i,N,\varepsilon}|)(1-h_\varepsilon(|Z_s^{i,N,\varepsilon}|)^2)+\frac{1}{N}\sum_{i=1}^N\Upsilon(|Z_s^{i,N,\varepsilon}|)\right]{\rm d}s\notag\\
&+(c_1+c_2)\frac{1}{N}\sum_{i=1}^N\int_0^t{\rm e}^{\lambda_0s}\left(\lambda_2\mathbb{E}\mathbb{W}_1(\widehat{\mu}_s^{i,\varepsilon},\overline{\mu}_s^{N,\varepsilon})+\mathbb{E}|b(Y_s^{i,N,\varepsilon},\widetilde{\mu}_s^{N,\varepsilon})-\widetilde{b}(Y_{\theta_s}^{i,N,\varepsilon},\widetilde{\mu}_{\overline{\theta}_s}^{N,\varepsilon})|\right){\rm d}s.
\end{align*}
By invoking $c_2\le f'(r)\le c_1+c_2, r\ge 0$ and $f(0)=0$, in addition to $h_\varepsilon\in[0,1]$, we deduce that for all $\varepsilon>0$,
\begin{align*}
&\lambda^*\frac{1}{N}\sum_{i=1}^Nf(|Z_s^{i,N,\varepsilon}|)\big(1-h_\varepsilon(|Z_s^{i,N,\varepsilon}|)^2\big)+\frac{1}{N}\sum_{i=1}^N\Upsilon(|Z_s^{i,N,\varepsilon}|)\notag\\
\le &2\left(\lambda^*+\phi(l_0)\right)(c_1+c_2)\frac{1}{N}\sum_{i=1}^N|Z_s^{i,N,\varepsilon}|\big(1-h_\varepsilon(|Z_s^{i,N,\varepsilon}|)\big)\notag\\
\le &4\left(\lambda^*+\phi(l_0)\right)(c_1+c_2)\varepsilon,
\end{align*}
where in the last inequality we used the fact that 
$$r(1-h_\varepsilon(r))\le 2\varepsilon,\quad r\ge 0.$$
On the other hand, according to Lemmas \ref{CPoC} and \ref{boundedness of Y_t^i,varepsilon}, it follows that for all $s\ge 0$ and $i\in \mathbb{S}_N$,
\begin{align*}
\mathbb{E}\mathbb{W}_1(\widehat{\mu}_s^{i,\varepsilon},\overline{\mu}_s^{N,\varepsilon})\le C\psi(N),
\end{align*}
Thus,
\begin{align*}
{\rm e}^{\lambda_0t}\frac{1}{N}\sum_{i=1}^N\mathbb{E}f(|Z_t^{i,N,\varepsilon}|)\le &\frac{1}{N}\sum_{i=1}^N\mathbb{E}f(|Z_0^{i,N,\varepsilon}|)+4\left(\lambda^*+\phi(l_0)\right)(c_1+c_2)\varepsilon{\rm e}^{\lambda_0t}+C\psi(N){\rm e}^{\lambda_0t}\notag\\
&+(c_1+c_2)\frac{1}{N}\sum_{i=1}^N\int_0^t{\rm e}^{\lambda_0s}\mathbb{E}|b(Y_s^{i,N,\varepsilon},\widetilde{\mu}_s^{N,\varepsilon})-\widetilde{b}(Y_{\theta_s}^{i,N,\varepsilon},\widetilde{\mu}_{\overline{\theta}_s}^{N,\varepsilon})|{\rm d}s.
\end{align*}
Once more, with the help of $c_2r\le f(r)\le (c_1+c_2)r, r\ge 0$, along with \eqref{W_1(mu,nu_0)}, there is a constant 
\begin{align}\label{(2.17)}
\frac{1}{N}\sum_{i=1}^N\mathbb{E}|Z_t^{i,N,\varepsilon}|\le &C{\rm e}^{-\lambda_0t}\mathbb{W}_1(\mu,\nu)+C\varepsilon+C\psi(N)\notag\\
&+C\frac{1}{N}\sum_{i=1}^N\int_0^t{\rm e}^{-\lambda_0(t-s)}\mathbb{E}|b(Y_s^{i,N,\varepsilon},\widetilde{\mu}_s^{N,\varepsilon})-\widetilde{b}(Y_{\theta_s}^{i,N,\varepsilon},\widetilde{\mu}_{\overline{\theta}_s}^{N,\varepsilon})|{\rm d}s.
\end{align}
By the aid of Lemma \ref{coupling process-lemma}, $\mu_t^i=\mathcal{L}(Y_t^{i,\varepsilon}\vert \mathcal{F}_t^W)$ and $\nu_t^{i,N}=\mathcal{L}(Y_t^{i,N,\varepsilon}\vert \mathcal{F}_t^W)$ for any $i\in\mathbb{S}_N$. Moreover, recall that $(Y_0^{i,\varepsilon}, Y_0^{i,N,\varepsilon})_{i\in\mathbb{S}_N}=(X_0^i, X_0^{i,N})_{i\in\mathbb{S}_N}$ are independent and identically distributed. Therefore, we derive that 
$$\mathcal{W}_1\big(\mathcal{L}(\mu_t^i),\mathcal{L}(\nu_t^{i,N})\big)\le\mathbb{E}^W\mathbb{W}_1(\mu_t^i,\nu_t^{i,N})\le \mathbb{E}^W\mathbb{E}^B|Z_t^{i,N,\varepsilon}|=\mathbb{E}|Z_t^{i,N,\varepsilon}|=\frac{1}{N}\sum_{j=1}^N\mathbb{E}|Z_t^{j,N,\varepsilon}|,~i\in\mathbb{S}_N.$$
Thus, \eqref{(2.17)} enables us to derive that 
\begin{align*}
\mathcal{W}_1\big(\mathcal{L}(\mu_t^i),\mathcal{L}(\nu_t^{i,N})\big)\le &C{\rm e}^{-\lambda_0t}\mathbb{W}_1(\mu,\nu)+C\varepsilon+C\psi(N)\notag\\
&+C\frac{1}{N}\sum_{j=1}^N\int_0^t{\rm e}^{-\lambda_0(t-s)}\mathbb{E}|b(Y_s^{j,N,\varepsilon},\widetilde{\mu}_s^{N,\varepsilon})-\widetilde{b}(Y_{\theta_s}^{j,N,\varepsilon},\widetilde{\mu}_{\overline{\theta}_s}^{N,\varepsilon})|{\rm d}s.
\end{align*}
Subsequently, making use of Lemma \ref{coupling process-lemma} followed by approaching $\varepsilon\to 0$, and applying the prerequisite that $(X_t^{i,N})_{i\in\mathbb{S}_N}$ are identically distributed yields that
\begin{align*}
\mathcal{W}_1\big(\mathcal{L}(\mu_t^i),\mathcal{L}(\nu_t^{i,N})\big)\le &C\left({\rm e}^{-\lambda_0t}\mathbb{W}_1(\mu,\nu)+\psi(N)+\int_0^t{\rm e}^{-\lambda_0(t-s)}\mathbb{E}|b(X_s^{i,N},\widetilde{\mu}_s^N)-\widetilde{b}(X_{\theta_s}^{i,N},\widetilde{\mu}_{\overline{\theta}_s}^N)|{\rm d}s\right).
\end{align*}
\end{proof}

\section{Uniform-in-time error estimates for stochastic algorithms}\label{stochastic algorithms}
Based on Theorem \ref{main theorem}, we focus on the uniform-in-time discretization error bounds for stochastic algorithms for the McKean-Vlasov SDE with common noise \eqref{NIPS} in this section. In addition to Assumptions \ref{A1} and \ref{A2}, we further suppose that the drift coefficient $b$ is of linear or super-linear growth and the diffusion coefficients $\sigma$ and $\overline\sigma$ are bounded, which are stated precisely as below.
 
 \begin{assumption}\label{S1}
There exist constants $l\ge 0$ and $K>0$ such that for all $x,y\in\mathbb{R}^d$ and $\mu\in\mathcal{P}_1(\mathbb{R}^d)$,
\begin{align*}
|b(x,\mu)-b(y,\mu)|\le K(1+|x|^l+|y|^l)\vert x-y\vert.
\end{align*}
\end{assumption}

Combining Assumptions \ref{A1} and \ref{S1}, it yields from the Young inequality that 
\begin{align}\label{b_growth}
|b(x,\mu)|\le \left(2K+\lambda_2+|b(0,\delta_0)|\right)\left(1+|x|^{l+1}+\mathbb{W}_1(\mu,\delta_0)\right)=:\hat K\left(1+|x|^{l+1}+\mathbb{W}_1(\mu,\delta_0)\right).
\end{align}

\begin{assumption}\label{N1}
There exists a constant $\hat L>0$ such that for all $x\in\mathbb{R}^d$ and $\mu\in\mathcal{P}_1(\mathbb{R}^d)$,
\begin{align*}
|\sigma(x,\mu)|\vee|\overline\sigma(x,\mu)|\le \hat L.
\end{align*}
\end{assumption}

\subsection{Backward EM method}
For a stepsize $h>0$, the backward EM method related to the McKean-Vlasov SDE with common noise \eqref{NIPS} is defined as
\begin{align}
X_{(n+1)h}^{i,N,h}=&X_{nh}^{i,N,h}+b(X_{(n+1)h}^{i,N,h},\widetilde{\mu}_{nh}^{N,h})h+\sigma_0\Delta B_n^i+\sigma(X_{nh}^{i,N,h},\widetilde{\mu}_{nh}^{N,h})\Delta \overline B_n^i\notag\\
&+\sigma_1\Delta W_n+\overline\sigma(X_{nh}^{i,N,h},\widetilde{\mu}_{nh}^{N,h})\Delta \overline{W}_n\label{BEM-dis}
\end{align}
with initial data $X_0^{i,N,h}$ for all $i\in\mathbb{S}_N$, where $\Delta B_n^i=B_{(n+1)h}^i-B_{nh}^i$, $\Delta \overline B_n^i=\overline B_{(n+1)h}^i-\overline B_{nh}^i$, $\Delta W_n=W_{(n+1)h}-W_{nh}$, $\Delta \overline{W}_n=\overline{W}_{(n+1)h}-\overline W_{nh}$, $\widetilde{\mu}_{nh}^{N,h}=\frac{1}{N}\sum_{j=1}^N \delta_{X_{nh}^{j,N,h}}$, $n\in\mathbb{N}$.

Set $t_h=\lfloor t/h \rfloor h$ with $\lfloor t/h \rfloor$ being the integer part of $t/h$, the continuous version of the backward EM method is defined as
\begin{align}\label{BEM}
{\rm d}X_t^{i,N,h}=b(X_{t_h+h}^{i,N,h},\widetilde{\mu}_{t_h}^{N,h}){\rm d}t+\sigma_0{\rm d}B_t^i+\sigma(X_{t_h}^{i,N,h},\widetilde{\mu}_{t_h}^{N,h}){\rm d}\overline B_t^i+\sigma_1{\rm d}W_t+\overline\sigma(X_{t_h}^{i,N,h},\widetilde{\mu}_{t_h}^{N,h}){\rm d}\overline{W}_t,
\end{align}
 Obviously, \eqref{BEM} is included in \eqref{IPS} by taking $\widetilde{b}=b$, $\theta_t=t_h+h$ and $\overline{\theta}_t=t_h$.

\begin{lemma}\label{boundedness of BEM}
Suppose that Assumptions \ref{A1} and \ref{N1} hold with $\lambda_1>2\lambda_2$. Then for any $p\ge 1$ and $h\in (0,h_p]$ with$$h_p=1\wedge\frac{\lambda_1-2\lambda_2}{2p(1+\lambda_2)^p},$$ there exists a constant $C>0$ such that for all $n\ge 0$ and $i\in\mathbb{S_N}$,
\begin{align*}
\mathbb{E}|X_{nh}^{i,N, h}|^p\le {\rm e}^{-\gamma^*nh}\mathbb{E}|X_0^{i,N,h}|^p+C, 
\end{align*}
in case of $\mathbb{E}|X_0^{i,N,h}|^p<\infty$, where $\gamma^*=\frac{\lambda_1-2\lambda_2}{2+3(2\lambda_1-3\lambda_2)}$.
\end{lemma}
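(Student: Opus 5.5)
The plan is to work directly with the discrete recursion \eqref{BEM-dis} and derive a one-step contraction for $\mathbb{E}|X_{(n+1)h}^{i,N,h}|^p$. The backward structure lets us pair the implicit drift with the new state. Write $Y:=X_{(n+1)h}^{i,N,h}$ and
\[
\xi_n:=X_{nh}^{i,N,h}+\sigma_0\Delta B_n^i+\sigma(X_{nh}^{i,N,h},\widetilde{\mu}_{nh}^{N,h})\Delta\overline B_n^i+\sigma_1\Delta W_n+\overline\sigma(X_{nh}^{i,N,h},\widetilde{\mu}_{nh}^{N,h})\Delta\overline W_n .
\]
Then $Y-hb(Y,\widetilde{\mu}_{nh}^{N,h})=\xi_n$. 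Taking the inner product with $Y$ and using \eqref{B1} gives
\[
|Y|^2\le \langle Y,\xi_n\rangle-K_1h|Y|^2+\tfrac{\lambda_2}{2}h\,\mathbb{W}_1^2(\widetilde{\mu}_{nh}^{N,h},\delta_0)+K_2h .
\]
Applying $\langle Y,\xi_n\rangle\le\frac12|Y|^2+\frac12|\xi_n|^2$ and $\mathbb{W}_1^2(\widetilde{\mu}_{nh}^{N,h},\delta_0)\le\frac1N\sum_j|X_{nh}^{j,N,h}|^2$, we obtain
\[
(1+2K_1h)|Y|^2\le|\xi_n|^2+\lambda_2h\tfrac1N\sum_j|X_{nh}^{j,N,h}|^2+2K_2h.
\]
Well-posedness of the implicit step for small $h$ follows from the one-sided condition in Assumption \ref{A1}, since $x\mapsto x-hb(x,\mu)$ is then a bijection.

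Next we raise this to the power $p/2$. For $p\ge2$ we expand $|\xi_n|^2=|X_{nh}|^2+2\langle X_{nh},\Delta M_n\rangle+|\Delta M_n|^2$, where $\Delta M_n$ is the noise increment. Its conditional mean given $\mathcal{F}_{nh}$ is zero, and by Assumption \ref{N1} its moments satisfy $\mathbb{E}|\Delta M_n|^{k}\le C h^{k/2}$ with $C$ independent of the state. Expanding $(a+u)^{p/2}$ binomially, the linear martingale term vanishes in expectation. The remaining terms are bounded by $Ch\,\mathbb{E}|X_{nh}|^{p-2}\le \eta h\,\mathbb{E}|X_{nh}|^p+C_\eta h$ via Young's inequality. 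For the empirical term we use Jensen, $(\frac1N\sum_j|X^j|^2)^{p/2}\le\frac1N\sum_j|X^j|^p$, together with the fact that the particles are identically distributed (exchangeable), so its expectation equals $\mathbb{E}|X_{nh}^{i,N,h}|^p$. Combining these yields
\[
(1+2K_1h)^{p/2}\mathbb{E}|Y|^p\le(1+p\lambda_2h+\eta h+C h^2)\,\mathbb{E}|X_{nh}^{i,N,h}|^p+Ch .
\]
Since $2K_1-2\lambda_2=2\lambda_1-5\lambda_2$ needs care, I would actually use $(1+2K_1h)^{-p/2}\le 1-\frac{pK_1h}{1+2K_1h}$ and exploit $\lambda_1>2\lambda_2$. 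This produces a factor $1-\gamma^*h$ with $\gamma^*$ of the stated form $\frac{\lambda_1-2\lambda_2}{2+3(2\lambda_1-3\lambda_2)}$, where the denominator comes from $1+\frac32\cdot 2K_1h$ with $h\le1$. The restriction $h\le h_p$ is exactly what absorbs the higher-order terms $(1+\lambda_2)^p h^2$ into half of the gain.

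Iterating $a_{n+1}\le(1-\gamma^*h)a_n+Ch$ gives
\[
a_n\le(1-\gamma^*h)^na_0+C/\gamma^*\le {\rm e}^{-\gamma^*nh}a_0+C .
\]
The case $1\le p<2$ then follows from the case $p=2$ by Jensen's inequality, $\mathbb{E}|X|^p\le(\mathbb{E}|X|^2)^{p/2}$, combined with $(a+b)^{p/2}\le a^{p/2}+b^{p/2}$. This step requires finite second moments, so when only $\mathbb{E}|X_0|^p<\infty$ is assumed I would rerun the argument with $(1+|Y|^2)^{p/2}$ in place of $|Y|^p$.

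The main obstacle is the binomial/Taylor expansion of $|\xi_n|^p$ for non-even $p$. The cross and higher-order noise terms must be controlled uniformly, with constants matching $h_p$ and $\gamma^*$. For this I would use the elementary inequality $(a+u)^{p/2}\le a^{p/2}+\frac p2a^{p/2-1}u+C_p(|u|^2a^{p/2-2}+|u|^{p/2})$, valid for $a>0$ and $a+u\ge0$, and take the conditional expectation given $\mathcal{F}_{nh}$.
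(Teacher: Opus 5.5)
Your starting inequality $(1+2K_1h)|Y|^2\le|\xi_n|^2+\lambda_2h\,\mathbb{W}_1^2(\widetilde{\mu}_{nh}^{N,h},\delta_0)+2K_2h$ is exactly the paper's. The overall architecture is also the same: expand the power, remove the cross term by conditioning on $\mathcal{F}_{nh}$, use Young for the remainders and exchangeability for the empirical measure, then iterate. The gap is in the contraction step, which is the heart of the lemma.

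With the leading coefficient $1+p\lambda_2h$ that you wrote, the first-order net rate against $(1+2K_1h)^{p/2}\approx 1+pK_1h$ is $p(K_1-\lambda_2)=\frac p2(2\lambda_1-5\lambda_2)$. This is $\le 0$ whenever $2\lambda_2<\lambda_1\le\frac52\lambda_2$, so under the stated hypothesis you get no contraction at all. Your proposed repair cannot change this, for two reasons:
\begin{itemize}
\item Any rewriting of $(1+2K_1h)^{-p/2}$ keeps its first-order coefficient $pK_1$.
\item The inequality $(1+2K_1h)^{-p/2}\le 1-\frac{pK_1h}{1+2K_1h}$ is false for $p>2$. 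Bernoulli applied to $(1-\frac{x}{1+x})^{p/2}$ gives the reverse inequality; what is true is $(1+2K_1h)^{-p/2}\le(1+pK_1h)^{-1}$.
\end{itemize}
The value of $\gamma^*$ is then asserted by matching the target formula rather than derived.

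The missing point is that $p\lambda_2$ overcounts by a factor of two. The interaction term must be treated jointly with $|X_{nh}^{i,N,h}|^2$ by convexity, as in \eqref{lemma4.1-eq2}. For $p\ge 2$,
\[
\Big(|X^i|^2+\lambda_2h\tfrac1N\textstyle\sum_j|X^j|^2\Big)^{p/2}\le(1+\lambda_2h)^{p/2-1}\Big(|X^i|^p+\lambda_2h\tfrac1N\textstyle\sum_j|X^j|^p\Big),
\]
and by exchangeability its expectation is at most $(1+\lambda_2h)^{p/2}\mathbb{E}|X^i|^p$. You then compare $(1+2K_1h)^{p/2}$ with $(1+\lambda_2h)^{p/2}$. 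Since $2K_1-\lambda_2=2(\lambda_1-2\lambda_2)>0$, for $h\le 1$ one has
\[
\Big(\frac{1+\lambda_2h}{1+2K_1h}\Big)^{p/2}\le 1-\frac{2(\lambda_1-2\lambda_2)}{1+2K_1}h .
\]
This leaves room to absorb the Young losses and yields a rate at least $\gamma^*$. In the paper's $2p$-moment convention, the same comparison is $1+2pK_1h$ against $(1+\lambda_2h)^p$.

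A secondary issue concerns $2<p<4$. There your Taylor inequality contains $a^{p/2-2}$, which is singular at $a=0$. When $u$ carries $|\Delta M_n|^2+2K_2h$, one has $\mathbb{E}[|u|^2\,|\,\mathcal{F}_{nh}]\ge(2K_2h)^2$, so you get a term $h^2a^{p/2-2}$ rather than $Ch\,a^{p/2-1}+Ch$. Either move $2K_2h$ into $a$, or do as the paper does: run the recursion only for even moments via the plain binomial theorem, and reach other $p$ by H\"older.
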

\begin{proof}
%Define $\mathcal{G}_t^{0,W}$ stands for the completion of $\mathcal{F}_0^{B}\times \mathcal{F}_t^W$

For any $n\in \mathbb{N}$, according to \eqref{BEM-dis}, we arrive at
\begin{align*}
|X_{(n+1)h}^{i,N, h}|^2=&|X_{nh}^{i,N, h}|^2-|X_{(n+1)h}^{i,N, h}-X_{nh}^{i,N, h}|^2+2\<X_{(n+1)h}^{i,N, h},X_{(n+1)h}^{i,N, h}-X_{nh}^{i,N, h}\>\notag\\
=&|X_{nh}^{i,N, h}|^2-|X_{(n+1)h}^{i,N, h}-X_{nh}^{i,N, h}|^2+2\<X_{(n+1)h}^{i,N, h}, b(X_{(n+1)h}^{i,N,h},\widetilde{\mu}_{nh}^{N,h})h\>\notag\\
&+2\<X_{(n+1)h}^{i,N, h}-X_{nh}^{i,N, h}, \sigma_0\Delta B_n^i+\sigma(X_{nh}^{i,N,h},\widetilde{\mu}_{nh}^{N,h})\Delta \overline B_n^i+\sigma_1\Delta W_n+\overline\sigma(X_{nh}^{i,N,h},\widetilde{\mu}_{nh}^{N,h})\Delta \overline{W}_n\>\notag\\
&+2\<X_{nh}^{i,N, h}, \sigma_0\Delta B_n^i+\sigma(X_{nh}^{i,N,h},\widetilde{\mu}_{nh}^{N,h})\Delta \overline B_n^i+\sigma_1\Delta W_n+\overline\sigma(X_{nh}^{i,N,h},\widetilde{\mu}_{nh}^{N,h})\Delta \overline{W}_n\>\notag\\
\le &|X_{nh}^{i,N, h}|^2+2h\<X_{(n+1)h}^{i,N, h}, b(X_{(n+1)h}^{i,N,h},\widetilde{\mu}_{nh}^{N,h})\>\notag\\
&+| \sigma_0\Delta B_n^i+\sigma(X_{nh}^{i,N,h},\widetilde{\mu}_{nh}^{N,h})\Delta \overline B_n^i+\sigma_1\Delta W_n+\overline\sigma(X_{nh}^{i,N,h},\widetilde{\mu}_{nh}^{N,h})\Delta \overline{W}_n|^2\notag\\
&+2\<X_{nh}^{i,N, h}, \sigma_0\Delta B_n^i+\sigma(X_{nh}^{i,N,h},\widetilde{\mu}_{nh}^{N,h})\Delta \overline B_n^i+\sigma_1\Delta W_n+\overline\sigma(X_{nh}^{i,N,h},\widetilde{\mu}_{nh}^{N,h})\Delta \overline{W}_n\>.
\end{align*}
By means of \eqref{B1} in Remark \ref{Remark-1.1}, together with Assumption \ref{N1}, we deduce that 
\begin{align*}
&(1+2K_1h)|X_{(n+1)h}^{i,N, h}|^2\\
&\le |X_{nh}^{i,N, h}|^2+\lambda_2h\mathbb{W}_1^2(\widetilde{\mu}_{nh}^{N,h},\delta_0)+2K_2h+4(\sigma_0^2+\sigma_1^2+\hat L^2)(|\Delta B_n^i|^2+|\Delta \overline B_n^i|^2+|\Delta W_n|^2+|\Delta \overline{W}_n|^2)\notag\\
&\quad+2\<X_{nh}^{i,N, h}, \sigma_0\Delta B_n^i+\sigma(X_{nh}^{i,N,h},\widetilde{\mu}_{nh}^{N,h})\Delta \overline B_n^i+\sigma_1\Delta W_n+\overline\sigma(X_{nh}^{i,N,h},\widetilde{\mu}_{nh}^{N,h})\Delta \overline{W}_n\>.
\end{align*}
%Since $\lambda_2>\lambda_3$, we arrive at
%\begin{align*}
%|X_{(n+1)h}^{i,N, h}|^2\le &|X_0^{i,N, h}|^2+2((n+1)h)(\lambda_1+\lambda_2)l_0^2+\lambda_3\int_0^{(n+1)h}\mathbb{W}_1^2(\widetilde{\mu}_{s_h}^{N,h},\delta_0){\rm d}s\notag\\
%&+h^{-1}\int_0^{(n+1)h}| (\sigma_0I_{d\times d}+\sigma(X_{s_h}^{i,N,h},\widetilde{\mu}_{s_h}^{N,h}))\Delta B_{s_h}^i+\overline\sigma(X_{s_h}^{i,N,h},\widetilde{\mu}_{s_h}^{N,h})\Delta W_{s_h}|^2{\rm d}s\notag\\
%&+2h^{-1}\int_0^{(n+1)h}\<X_{s_h}^{i,N, h}, (\sigma_0I_{d\times d}+\sigma(X_{s_h}^{i,N,h},\widetilde{\mu}_{s_h}^{N,h}))\Delta B_{s_h}^i+\overline\sigma(X_{s_h}^{i,N,h},\widetilde{\mu}_{s_h}^{N,h})\Delta W_{s_h}\>{\rm d}s.
%\end{align*}
%For any $p\ge 2$, using the Jensen inequality, yields
%\begin{align*}
%\mathbb{E}|X_{(n+1)h}^{i,N, h}|^p\le &C\mathbb{E}|X_0^{i,N, h}|^p+C((n+1)h)^{p/2}+C((n+1)h)^{p/2-1}\int_0^{(n+1)h}\mathbb{E}\mathbb{W}_1^p(\widetilde{\mu}_{s_h}^{N,h},\delta_0){\rm d}s\notag\\
%&+h^{-p/2}((n+1)h)^{p/2-1}\int_0^{(n+1)h}\mathbb{E}| (\sigma_0I_{d\times d}+\sigma(X_{s_h}^{i,N,h},\widetilde{\mu}_{s_h}^{N,h}))\Delta B_{s_h}^i+\overline\sigma(X_{s_h}^{i,N,h},\widetilde{\mu}_{s_h}^{N,h})\Delta W_{s_h}|^p{\rm d}s\notag\\
%&+Ch^{-p/2}((n+1)h)^{p/2-1}\int_0^{(n+1)h}\mathbb{E}\left(\int_{s_h}^{s_h+h}\<X_{s_h}^{i,N, h}, \sigma_0I_{d\times d}+\sigma(X_{s_h}^{i,N,h},\widetilde{\mu}_{s_h}^{N,h})\>{\rm d} B_s^i\right)^{p/2}{\rm d}s\notag\\
%&+Ch^{-p/2}((n+1)h)^{p/2-1}\int_0^{(n+1)h}\mathbb{E}\left(\int_{s_h}^{s_h+h}\<X_{s_h}^{i,N, h},\overline\sigma(X_{s_h}^{i,N,h},\widetilde{\mu}_{s_h}^{N,h})\>{\rm d} W_s\right)^{p/2}{\rm d}s.
%\end{align*}

For any integer $p\ge 1$, using the binomial theorem, we have
\begin{align}\label{lemma4.1-eq1}
(1+2K_1h)^{p}|X_{(n+1)h}^{i,N, h}|^{2p}\le &\left(|X_{nh}^{i,N, h}|^2+\lambda_2h\mathbb{W}_1^2(\widetilde{\mu}_{nh}^{N,h},\delta_0)\right)^{p}\notag\\
&+p\left(|X_{nh}^{i,N, h}|^2+\lambda_2h\mathbb{W}_1^2(\widetilde{\mu}_{nh}^{N,h},\delta_0)\right)^{p-1}U_{nh}^i\notag\\
&+{\mathbbm 1}_{\{p\ge 2\}}\sum_{k=0}^{p-2}C_{p}^k\left(|X_{nh}^{i,N, h}|^2+\lambda_2h\mathbb{W}_1^2(\widetilde{\mu}_{nh}^{N,h},\delta_0)\right)^k(U_{nh}^i)^{p-k}\notag\\
=&:\Gamma_{nh}^i({\bf X}_{nh}^{N,h})+\widetilde \Gamma_{nh}^i({\bf X}_{nh}^{N,h})+\overline \Gamma_{nh}^i({\bf X}_{nh}^{N,h}),
\end{align}
where 
\begin{align*}
U_{nh}^i=&2K_2h+4(\sigma_0^2+\sigma_1^2+\hat L^2)(|\Delta B_n^i|^2+|\Delta \overline B_n^i|^2+|\Delta W_n|^2+|\Delta \overline{W}_n|^2)\notag\\
&+2\<X_{nh}^{i,N, h}, \sigma_0\Delta B_n^i+\sigma(X_{nh}^{i,N,h},\widetilde{\mu}_{nh}^{N,h})\Delta \overline B_n^i+\sigma_1\Delta W_n+\overline\sigma(X_{nh}^{i,N,h},\widetilde{\mu}_{nh}^{N,h})\Delta \overline{W}_n\>,\notag\\
{\bf X}_{nh}^{N,h}=&(X_{nh}^{1,N,h},\cdots, X_{nh}^{N,N,h}).
\end{align*}
In the following, let us estimate the terms $\Gamma_{nh}^i, \widetilde \Gamma_{nh}^i$ and $\overline \Gamma_{nh}^i$ one by one. First, using the binomial theorem once again, combining the Jensen inequality and the Young inequality, yields
\begin{align*}
\Gamma_{nh}^i({\bf X}_{nh}^{N,h})=&\sum_{k=0}^{p}C_{p}^k |X_{nh}^{i,N, h}|^{2k}\left(\lambda_2h\mathbb{W}_1^2(\widetilde{\mu}_{nh}^{N,h},\delta_0)\right)^{p-k}\notag\\
\le &\sum_{k=0}^{p}C_{p}^k (\lambda_2h)^{p-k}\left(\frac{k}{p}|X_{nh}^{i,N, h}|^{2p}+\frac{p-k}{p}\mathbb{W}_1^{2p}(\widetilde{\mu}_{nh}^{N,h},\delta_0)\right).
\end{align*}
Hence, according to the fact that $X_{nh}^{1,N,h},\cdots, X_{nh}^{N,N,h}$ are identically distributed, we have
\begin{align}\label{lemma4.1-eq2}
\mathbb{E}[\Gamma_{nh}^i({\bf X}_{nh}^{N,h})|\mathcal F_0]\le (1+\lambda_2h)^{p}\mathbb{E}[|X_{nh}^{i,N, h}|^{2p}|\mathcal F_0].
\end{align}
By using the Young inequality, one can derive that
\begin{align}\label{lemma4.1-eq3}
\mathbb{E}[\widetilde \Gamma_{nh}^i({\bf X}_{nh}^{N,h})|\mathcal F_0]
\le&\frac{1}{8}(\lambda_1-2\lambda_2)ph\mathbb{E}[|X_{nh}^{i,N, h}|^{2p}|\mathcal F_0]+Ch.
\end{align}
Once again, we can also get that
\begin{align}\label{lemma4.1-eq4}
\mathbb{E}[\overline \Gamma_{nh}^i({\bf X}_{nh}^{N,h})|\mathcal F_0]\le&\frac{1}{8}(\lambda_1-2\lambda_2)ph\mathbb{E}[|X_{nh}^{i,N, h}|^{2p}|\mathcal F_0]+Ch.
\end{align}
Since $h\in(0,h_p]$, we can derive that $(1+\lambda_2h)^{p}\le 1+\lambda_2ph+\frac{1}{4}(\lambda_1-2\lambda_2)ph$. Substituting \eqref{lemma4.1-eq2}-\eqref{lemma4.1-eq4} into \eqref{lemma4.1-eq1}, it yields that
\begin{align}\label{lemma4.1-eq5}
(1+2pK_1h)\mathbb{E}[|X_{(n+1)h}^{i,N, h}|^{2p}|\mathcal F_0]\le&\left( (1+\lambda_2h)^{p}+\frac{1}{4}(\lambda_1-2\lambda_2)ph\right)\mathbb{E}[|X_{nh}^{i,N, h}|^{2p}|\mathcal F_0]+Ch\notag\\
\le& \left(1+\lambda_2ph+\frac{1}{2}(\lambda_1-2\lambda_2)ph\right)\mathbb{E}[|X_{nh}^{i,N, h}|^{2p}|\mathcal F_0]+Ch.
\end{align}
%{\color{red}where we need to set $h<\frac{2(\lambda_1-2\lambda_2)}{p(1+\lambda_3)^{p/2}}$. }
Hence,
\begin{align*}
\mathbb{E}[|X_{(n+1)h}^{i,N, h}|^{2p}|\mathcal F_0]
\le&\overline\gamma_h\mathbb{E}[|X_{nh}^{i,N, h}|^{2p}|\mathcal F_0]+Ch,
\end{align*}
which further implies 
\begin{align*}
\mathbb{E}[|X_{(n+1)h}^{i,N, h}|^{2p}|\mathcal F_0]
\le&(\overline\gamma_h)^{n+1}|X_0^{i,N, h}|^{2p}+\frac{Ch}{1-\overline\gamma_h},
\end{align*}
where
\begin{align}
\overline\gamma_h:= \frac{1+\lambda_1ph/2}{1+(2\lambda_1-3\lambda_2)ph}\in(0,1).
\end{align}
By employing the inequality: $a^r\le {\rm e}^{-(1-a)r}$ for $a,r>0$, yields
\begin{align*}
\mathbb{E}[|X_{(n+1)h}^{i,N, h}|^{2p}|\mathcal F_0]\le {\rm e}^{-\hat\gamma_p(n+1)h}|X_0^{i,N, h}|^{2p}+\frac{C}{\hat\gamma_{p}},
\end{align*}
where $\hat\gamma_p=\frac{1-\overline\gamma_h}{h}=\frac{3}{2}\cdot\frac{(\lambda_1-2\lambda_2)p}{1+(2\lambda_1-3\lambda_2)p}>0.$ 
For $p\in [1,2)$, it follows from the H\"older inequality and the inequality:$(a+b)^r\le a^r+b^r$ for $a,b>0$ and $r\in(0,1]$ that 
\begin{align*}
\mathbb{E}[|X_{(n+1)h}^{i,N, h}|^p|\mathcal F_0]\le \left(\mathbb{E}[|X_{(n+1)h}^{i,N, h}|^{2}|\mathcal F_0]\right)^{\frac{p}{2}}\le {\rm e}^{-\gamma_p^*(n+1)h}|X_0^{i,N, h}|^p+C.
\end{align*}
where $\gamma_p^*=\frac{3}{2}\cdot\frac{(\lambda_1-2\lambda_2)p}{2+(2\lambda_1-3\lambda_2)(p+2)}$. For $p> 2$ which is not an even number, 
\begin{align*}
\mathbb{E}[|X_{(n+1)h}^{i,N, h}|^p|\mathcal F_0]\le \left(\mathbb{E}[|X_{(n+1)h}^{i,N, h}|^{2\lceil p/2\rceil}|\mathcal F_0]\right)^{\frac{p}{2\lceil p/2\rceil}}
\le {\rm e}^{-\gamma_p^*(n+1)h}|X_0^{i,N, h}|^p+C,
\end{align*}
where $\lceil \cdot\rceil$ means the ceiling function. Hence, it holds that for any $p\ge 1$,
\begin{align}\label{lemma4.1-eq6}
\mathbb{E}[|X_{(n+1)h}^{i,N, h}|^p|\mathcal F_0]\le {\rm e}^{-\gamma^*(n+1)h}|X_0^{i,N, h}|^p+C, ~\forall n\in \mathbb{N},
\end{align}
where $\gamma^*=\frac{\lambda_1-2\lambda_2}{2+3(2\lambda_1-3\lambda_2)}>0$.
Consequently, recall that $X_0^{i,N, h}=X_0^i$, it follows from the property of conditional expectation that 
\begin{align*}
\mathbb{E}|X_{(n+1)h}^{i,N, h}|^p\le {\rm e}^{-\gamma^*(n+1)h}\mathbb{E}|X_0^{i,N,h}|^p+C, ~\forall n\in \mathbb{N}.
\end{align*}
\end{proof}

%\begin{lemma}\label{BEM-lemma2}
%Suppose that Assumptions \ref{A1}, \ref{A12}, \ref{S1}, \ref{N1} hold with $\lambda_1>2\lambda_2$ and that $\mathbb{E}|X_0^i|^{2(l+1)^2}<\infty$. Then for any $h\in (0,\overline h]$ with$$\overline h=1\wedge\frac{\lambda_1-2\lambda_2}{4(l+1)^2(1+\lambda_3)^{2(l+1)^2}},$$ there exists a constant $C>0$ such that for all $n\ge 0$,
%\begin{align*}
%\mathbb{E}|b(X_t^{i,N,h},\widetilde{\mu}_t^{N,h})-b(X_{t_h+h}^{i,N,h},\widetilde{\mu}_{t_h}^{N,h})|\le C\left(1+\mathbb{E}|X_0^{i,N,h}|^{2(l+1)^2}\right)h^{1/2}.
%\end{align*}
%\end{lemma}

As a direct application of Theorem \ref{main theorem}, we can get the uniform-in-time discretization error bounds for the backward EM method.
\begin{theorem}\label{BEM-error-bound}
Suppose that Assumptions \ref{A1}, \ref{A2}, \ref{S1}, \ref{N1} hold with $\lambda_1>2\lambda_2$ and that $\sigma_0\sigma_1\neq 0$. Let 
$$\overline h=1\wedge\frac{\lambda_1-2\lambda_2}{4(l+1)^2(1+\lambda_2)^{2(l+1)^2}},$$
then there exist constants $C>0$ such that for any $\lambda_2\in [0,\lambda_2^*), L\in[0,L^*]$, $h\in (0,\overline h]$, $t>0$ and $i\in\mathbb{S}_N$,
\begin{align*}
\mathcal{W}_1\big(\mathcal{L}(\mu_t^i),\mathcal{L}(\nu_t^{i,N,h})\big)\le C\left\{{\rm e}^{-\lambda_0t}\mathbb{W}_1(\mu,\nu)+\psi(N)+\left(1+\mathbb{E}|X_0^{i,N,h}|^{2(l+1)^2}\right)h^{1/2}\right\}
\end{align*}
in case of $\mathbb{E}|X_0^i|^{q}<\infty$ for some $q>1$ and $\mathbb{E}|X_0^{i,N,h}|^{2(l+1)^2}<\infty$, where $\mu_t^i=\mathcal{L}(X_t^i\vert \mathcal{F}_t^W)$ stands for the regular conditional distribution of $X_t$, determined by \eqref{NIPS}, with the initial distribution $\mathcal{L}(X_0^i)=\mu$, $\nu_t^{i,N,h}=\mathcal{L}(X_t^{i,N,h}\vert \mathcal{F}_t^W)$ stands for the regular conditional distribution of $X_t^{i,N,h}$, determined by \eqref{BEM}, with the initial distribution $\mathcal{L}(X_0^{i,N,h})=\nu$, $\lambda_2^*, L^*$, and $\lambda_0$ were given in Theorem  \ref{main theorem}.
\end{theorem}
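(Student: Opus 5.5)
The plan is to apply Theorem \ref{main theorem} with $\widetilde b=b$, $\theta_t=t_h+h$, $\overline\theta_t=t_h$. Then everything reduces to a bound, uniform in $s\ge0$, on the drift discrepancy
\begin{align*}
\mathbb{E}\big|b(X_s^{i,N,h},\widetilde{\mu}_s^{N,h})-b(X_{s_h+h}^{i,N,h},\widetilde{\mu}_{s_h}^{N,h})\big|\le C\big(1+\mathbb{E}|X_0^{i,N,h}|^{2(l+1)^2}\big)h^{1/2}.
\end{align*}
Once this holds, $\int_0^t{\rm e}^{-\lambda_0(t-s)}\,{\rm d}s\le 1/\lambda_0$ produces the $h^{1/2}$ term. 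The first two terms, ${\rm e}^{-\lambda_0 t}\mathbb{W}_1(\mu,\nu)$ and $\psi(N)$, come directly from the theorem, since $\mathbb{E}|X_0^i|^q<\infty$.

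\textbf{Splitting the discrepancy.} I would split it into a state part and a measure part by the triangle inequality:
\begin{align*}
|b(X_s,\widetilde\mu_s)-b(X_{s_h+h},\widetilde\mu_{s_h})|\le |b(X_s,\widetilde\mu_s)-b(X_{s_h+h},\widetilde\mu_s)|+\lambda_2\mathbb{W}_1(\widetilde\mu_s,\widetilde\mu_{s_h}).
\end{align*}
\begin{itemize}
\item For the measure part, use the coupling $\frac1N\sum_j\delta_{(X^j_s,X^j_{s_h})}$ to get $\mathbb{W}_1(\widetilde\mu_s,\widetilde\mu_{s_h})\le\frac1N\sum_j|X_s^{j}-X_{s_h}^j|$. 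Exchangeability then reduces it to $\mathbb{E}|X_s^{i,N,h}-X_{s_h}^{i,N,h}|$.
\item For the state part, Assumption \ref{S1} and Hölder give
\begin{align*}
K\big(1+\|X_s\|_{L^{2l}}^l+\|X_{s_h+h}\|_{L^{2l}}^l\big)\,\|X_s-X_{s_h+h}\|_{L^2}.
\end{align*}
Then $\|X_s-X_{s_h+h}\|_{L^2}\le \|X_s-X_{s_h}\|_{L^2}+\|X_{s_h+h}-X_{s_h}\|_{L^2}$.
\end{itemize}

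\textbf{Increment bound.} From \eqref{BEM}, for $s\in[s_h,s_h+h]$,
\begin{align*}
X_s-X_{s_h}=(s-s_h)\,b(X_{s_h+h},\widetilde\mu_{s_h})+\text{(martingale increments)}.
\end{align*}
Assumption \ref{N1} bounds the diffusion contributions by $Ch^{1/2}$ in $L^2$. By \eqref{b_growth}, the drift contributes $h\hat K(1+\|X_{s_h+h}\|_{L^{2(l+1)}}^{l+1}+\|X_{s_h}\|_{L^2})$, where $\mathbb{E}\mathbb{W}_1^2(\widetilde\mu_{s_h},\delta_0)\le\mathbb{E}|X_{s_h}|^2$ by Jensen and exchangeability. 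This requires uniform-in-time moments at grid points only, which Lemma \ref{boundedness of BEM} supplies. It applies with $p=2(l+1)^2$, and $\overline h\le h_p$ for that $p$, since $\overline h$ was chosen exactly as $h_p$ with $p=2(l+1)^2$. It yields $\sup_n\mathbb{E}|X_{nh}|^{p}\le \mathbb{E}|X_0|^{p}+C$. Moments at non-grid times $s$ then follow from the same representation.

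\textbf{Main obstacle: moment bookkeeping.} The product of the factor $(1+|x|^l+|y|^l)$ with the increment, whose drift piece grows like $|X|^{l+1}$, requires moments of order about $2l+2(l+1)$, or $(l+1)^2$-type after Hölder. One must check that $2(l+1)^2$ suffices and that the bound is \emph{linear} in $\mathbb{E}|X_0|^{2(l+1)^2}$, as in the statement. I would apply Hölder with exponents chosen so that every moment appearing has order at most $2(l+1)^2$. Then I would use $\|X\|_{L^r}^k\le 1+\mathbb{E}|X|^{2(l+1)^2}$ whenever $rk\le 2(l+1)^2$, via Lyapunov and Young, to collect everything into $C(1+\mathbb{E}|X_0^{i,N,h}|^{2(l+1)^2})h^{1/2}$, using $h\le 1$ to absorb $h$ into $h^{1/2}$.
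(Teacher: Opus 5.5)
Your proposal is correct and follows the paper's proof essentially step by step. The shared steps are: Theorem \ref{main theorem} with $\theta_t=t_h+h$, $\overline\theta_t=t_h$; a triangle split into a state part (Assumption \ref{S1} plus H\"older) and a measure part (empirical coupling plus exchangeability); increment bounds of order $h$ read off from \eqref{BEM}; and moments from Lemma \ref{boundedness of BEM} at grid points (indeed $\overline h=h_p$ with $p=2(l+1)^2$) and from the same representation at non-grid times.

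One small correction to your bookkeeping rule. The right criterion is $(\mathbb{E}|X|^r)^{k/r}\le 1+\mathbb{E}|X|^{2(l+1)^2}$ for $r,k\le 2(l+1)^2$, not $rk\le 2(l+1)^2$; the latter misses, e.g., the term $\|X_{t_h+h}\|_{L^{2l(l+1)}}^{l(l+1)}$ coming from $\|X_t\|_{L^{2l}}^l$ when $l>1$. Linearity in $\mathbb{E}|X_0^{i,N,h}|^{2(l+1)^2}$ is then obtained, as in the paper, by turning the H\"older product into a sum via $\sqrt{ab}\le(a+b)/2$ before bounding the individual moments.
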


\begin{proof}
Note that for any $t\ge 0$, there always exists $n\in \mathbb{N}$ such that $t\in[nh,(n+1)h)$, and then $t_h=nh$. For any $p>0$, from \eqref{BEM} and \eqref{b_growth}, using Lemma \ref{boundedness of BEM}, 
\begin{align}\label{lemma_4.2-1}
\mathbb{E}|X_t^{i,N, h}|^p\le &C\mathbb{E}|X_{nh}^{i,N, h}|^p+Ch^p\mathbb{E}|b(X_{(n+1)h}^{i,N,h},\widetilde{\mu}_{nh}^{N,h})|^p+C|\sigma_0|^p\mathbb{E}|B_t^i-B_{nh}^i|^p\notag\\
&+C\mathbb{E}\left[\left|\sigma(X_{nh}^{i,N,h},\widetilde{\mu}_{nh}^{N,h})\right|^p\mathbb{E}\left(|\overline B_t^i-\overline B_{nh}^i|^p\big|\mathcal{F}_{nh}\right)\right]+C|\sigma_1|^p\mathbb{E}|W_t-W_{nh}|^p\notag\\
&+C\mathbb{E}\left[|\overline\sigma(X_{nh}^{i,N,h},\widetilde{\mu}_{nh}^{N,h})|^p\mathbb{E}\left(|\overline{W}_t-\overline W_{nh}|^p\big|\mathcal{F}_{nh}\right)\right]\notag\\
\le &C\left(1+\mathbb{E}|X_{(n+1)h}^{i,N,h}|^{p(l+1)}+\mathbb{E}|X_{nh}^{i,N,h}|^{p}\right)\notag\\
\le &C\left(1+\mathbb{E}|X_0^{i,N,h}|^{p(l+1)}\right).
\end{align}
Based on \eqref{BEM}, combining \eqref{b_growth} and Assumption \ref{N1}, utilizing the fact that $X_t^{1,N,h}, \cdots, X_t^{N,N,h}$ are identically distributed, it yields that for $t\in(nh,(n+1)h)$,
\begin{align}\label{4.2.1}
\mathbb{E}\vert X_t^{i,N,h}-X_{nh}^{i,N,h}\vert^2\le& Ch\left(\mathbb{E}\vert b(X_{(n+1)h}^{i,N,h},\widetilde{\mu}_{nh}^{N,h})\vert^2h+\sigma_0^2+\sigma_1^2+\mathbb{E}\vert\sigma(X_{nh}^{i,N,h},\widetilde{\mu}_{nh}^{N,h})\vert^2+\mathbb{E}\vert\overline\sigma(X_{nh}^{i,N,h},\widetilde{\mu}_{nh}^{N,h})\vert^2\right)\notag\\
\le&Ch\left(1+\mathbb{E}|X_{(n+1)h}^{i,N,h}|^{2(l+1)}+\mathbb{E}|X_{nh}^{i,N,h}|^2\right),
\end{align}
according to the continuity of $X_t^{i,N,h}$, it implies
\begin{align}\label{4.2.2}
\mathbb{E}\vert X_{(n+1)h}^{i,N,h}-X_{nh}^{i,N,h}\vert^2\le&Ch\left(1+\mathbb{E}|X_{(n+1)h}^{i,N,h}|^{2(l+1)}+\mathbb{E}|X_{nh}^{i,N,h}|^2\right).
\end{align}
With the help of the triangle and H\"older inequalities, applying Assumptions \ref{A1} and \ref{S1}, it follows from \eqref{4.2.1} and \eqref{4.2.2} that
\begin{align*}
&\mathbb{E}|b(X_t^{i,N,h},\widetilde{\mu}_t^{N,h})-b(X_{(n+1)h}^{i,N,h},\widetilde{\mu}_{nh}^{N,h})|\\
&\le \mathbb{E}|b(X_t^{i,N,h},\widetilde{\mu}_t^{N,h})-b(X_{(n+1)h}^{i,N,h},\widetilde{\mu}_t^{N,h})|+\mathbb{E}|b(X_{(n+1)h}^{i,N,h},\widetilde{\mu}_t^{N,h})-b(X_{(n+1)h}^{i,N,h},\widetilde{\mu}_{nh}^{N,h})|\notag\\
&\le C\left(1+\mathbb{E}|X_t^{i,N,h}|^{2l}+\mathbb{E}|X_{(n+1)h}^{i,N,h}|^{2l}\right)^{1/2}\left(\mathbb{E}\vert X_t^{i,N,h}-X_{(n+1)h}^{i,N,h}\vert^2\right)^{1/2}+\lambda_2\mathbb{E}\mathbb{W}_1(\widetilde{\mu}_t^{N,h},\widetilde{\mu}_{nh}^{N,h})\notag\\
&\le C\left(1+\mathbb{E}|X_t^{i,N,h}|^{2l}+\mathbb{E}|X_{(n+1)h}^{i,N,h}|^{2l}\right)^{1/2}\left(\mathbb{E}\vert X_t^{i,N,h}-X_{nh}^{i,N,h}\vert^2+\mathbb{E}\vert X_{nh}^{i,N,h}-X_{(n+1)h}^{i,N,h}\vert^2\right)^{1/2}\notag\\
&\quad+\lambda_2\mathbb{E}|X_t^{i,N,h}-X_{nh}^{i,N,h}|\notag\\
&\le C\left(1+\mathbb{E}|X_t^{i,N,h}|^{2l}+\mathbb{E}|X_{(n+1)h}^{i,N,h}|^{2(l+1)}+\mathbb{E}|X_{nh}^{i,N,h}|^2\right)h^{1/2}.
\end{align*}
Using Lemma \ref{boundedness of BEM} and \eqref{lemma_4.2-1}, leads to
\begin{align}\label{BEM-b-b_tilde}
\mathbb{E}|b(X_t^{i,N,h},\widetilde{\mu}_t^{N,h})-b(X_{(n+1)h}^{i,N,h},\widetilde{\mu}_{nh}^{N,h})|\le C\left(1+\mathbb{E}|X_0^{i,N,h}|^{2(l+1)^2}\right)h^{1/2}.
\end{align}
Applying Theorem \ref{main theorem} with $X_t^{i,N}=X_t^{i,N,h}$, $\theta_t=t_h+h$, $\overline \theta_t=t_h$, together with \eqref{BEM-b-b_tilde}, yields that 
\begin{align*}
\mathcal{W}_1\big(\mathcal{L}(\mu_t^i),\mathcal{L}(\nu_t^{i,N,h})\big)\le C\left({\rm e}^{-\lambda_0t}\mathbb{W}_1(\mu,\nu)+\psi(N)+\left(1+\mathbb{E}|X_0^{i,N,h}|^{2(l+1)^2}\right)h^{1/2}\right).
\end{align*}
\end{proof}

%%%-------------------------------------------------------------------------------------------------------------------------------------------------------

\subsection{Tamed EM method}
For a stepsize $h>0$, the tamed EM method related to the McKean-Vlasov SDE with common noise \eqref{NIPS} is defined as
\begin{align}\label{TEM-dis}
\overline X_{(n+1)h}^{i,N,h}=&\overline X_{nh}^{i,N,h}+\frac{b(\overline X_{nh}^{i,N,h},\widetilde{\mu}_{nh}^{\overline X,N,h})}{1+\alpha\sqrt{h}|\overline X_{nh}^{i,N,h}|^l}h+\sigma_0\Delta B_n^i+\sigma(\overline X_{nh}^{i,N,h},\widetilde{\mu}_{nh}^{\overline X,N,h})\Delta \overline B_n^i\notag\\
&+\sigma_1\Delta W_n+\overline\sigma(\overline X_{nh}^{i,N,h},\widetilde{\mu}_{nh}^{\overline X,N,h})\Delta \overline{W}_n,
\end{align}
with initial data $\overline X_0^{i,N,h}$ for all $i\in\mathbb{S}_N$, where $\widetilde{\mu}_{nh}^{\overline X,N,h}=\frac{1}{N}\sum_{j=1}^N \delta_{\overline X_{nh}^{j,N,h}}$, $n\in\mathbb{N}$, $\alpha\in (0,1]$, $l$ being defined in Assumption \ref{S1}. If $l=0$, i.e., the drift coefficient $b$ is of linear growth, then we can take $\alpha=0$, and equation \eqref{TEM-dis} degenerates to the standard EM method.

For $t_h=\lfloor t/h \rfloor h$, the continuous version of the tamed EM method for the McKean-Vlasov SDE with common noise \eqref{NIPS} as
\begin{align}\label{TEM}
{\rm d}\overline X_t^{i,N,h}=&\frac{b(\overline X_{t_h}^{i,N,h},\widetilde{\mu}_{t_h}^{\overline X,N,h})}{1+\alpha\sqrt{h}|\overline X_{t_h}^{i,N,h}|^l}{\rm d}t+\sigma_0{\rm d}B_t^i+\sigma(\overline X_{t_h}^{i,N,h},\widetilde{\mu}_{t_h}^{\overline X,N,h}){\rm d}\overline B_t^i\notag\\
&+\sigma_1{\rm d}W_t+\overline\sigma(\overline X_{t_h}^{i,N,h},\widetilde{\mu}_{t_h}^{\overline X,N,h}){\rm d}\overline{W}_t.
\end{align}
Transparently, \eqref{TEM} can be incorporated into the framework of \eqref{IPS} by setting $\widetilde b(x,\mu)=\frac{b(x,\mu)}{1+\alpha\sqrt{h}|x|^l}$ and $\theta_t=\overline \theta_t=t_h$.

\begin{assumption}\label{N2-1}
There exist constants $\overline\lambda_1, \overline\lambda_2, \overline\lambda_3>0$ such that for all $x\in\mathbb{R}^d$ and $\mu\in\mathcal{P}_1(\mathbb{R}^d)$,
\begin{align*}
\<x, b(x,\mu)\>\le -\overline\lambda_1|x|^{l+2}+\overline\lambda_2\mathbb{W}_1^2(\mu,\delta_0)+\overline\lambda_3,
\end{align*}
where $l$ was proposed in Assumption \ref{S1}.
\end{assumption}

\begin{lemma}\label{TEM-lemma1}
Suppose that Assumptions \ref{A1}, \ref{S1}-\ref{N2-1} hold with $\overline\lambda_1>2\overline\lambda_2$. Let
\begin{align*}
h^*:=1\wedge \frac{1}{\overline\lambda_1}\wedge\frac{4\alpha^2(\overline\lambda_1-2\overline\lambda_2)^2}{9\hat K^4(1+2\alpha)^2},
\end{align*}
where $\hat K=2K+\lambda_2+|b(0,\delta_0)|$ was defined in \eqref{b_growth}. Then, for any $p\ge 1$ and $h\in(0,h^*]$, the solution of \eqref{TEM} satisfies
\begin{align}\label{TEM-lemma1-EQ}
\mathbb{E}|\overline X_{nh}^{i,N, h}|^p\le C(1+\mathbb{E}|\overline X_0^{i,N,h}|^p), ~n\in\mathbb{N},
\end{align}
in case of $\mathbb{E}|\overline X_0^{i,N,h}|^p<\infty$.
\end{lemma}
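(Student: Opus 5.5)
Following the template of Lemma \ref{boundedness of BEM}, I will first prove the estimate for even exponents $2p$ with $p\in\mathbb{N}_+$, and then treat general $p\ge 1$ by H\"older's inequality with the exponent $2\lceil p/2\rceil$. Write $\overline X_n:=\overline X_{nh}^{i,N,h}$, $\widetilde\mu_n:=\widetilde{\mu}_{nh}^{\overline X,N,h}$ and $\Delta M_n:=\sigma_0\Delta B_n^i+\sigma(\overline X_n,\widetilde\mu_n)\Delta\overline B_n^i+\sigma_1\Delta W_n+\overline\sigma(\overline X_n,\widetilde\mu_n)\Delta\overline W_n$. Squaring \eqref{TEM-dis} gives
\begin{align*}
|\overline X_{n+1}|^2=|\overline X_n|^2+\frac{2h\<\overline X_n,b(\overline X_n,\widetilde\mu_n)\>}{1+\alpha\sqrt h|\overline X_n|^l}+\frac{h^2|b(\overline X_n,\widetilde\mu_n)|^2}{(1+\alpha\sqrt h|\overline X_n|^l)^2}+2\Big\<\overline X_n+\frac{b\,h}{1+\alpha\sqrt h|\overline X_n|^l},\Delta M_n\Big\>+|\Delta M_n|^2 .
\end{align*}
Note that $\Delta M_n$ is independent of $\mathcal F_{nh}$, and Assumption \ref{N1} bounds its moments by $C h^{k/2}$.

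The key step is a one-step drift bound of the form
\[
\frac{2h\<x,b(x,\mu)\>}{1+\alpha\sqrt h|x|^l}+\frac{h^2|b(x,\mu)|^2}{(1+\alpha\sqrt h|x|^l)^2}\le -\gamma h|x|^2+C h\,\mathbb{W}_1^2(\mu,\delta_0)+Ch,
\]
with $\gamma>0$ independent of $h$. To get it, I will use Assumption \ref{N2-1} for the first term and \eqref{b_growth} for the second, which gives
\[
\frac{h^2|b|^2}{(1+\alpha\sqrt h|x|^l)^2}\le \frac{3\hat K^2h^2(1+|x|^{2l+2}+\mathbb W_1^2)}{(1+\alpha\sqrt h|x|^l)^2}.
\]
I then split into two cases. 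If $\alpha\sqrt h|x|^l\ge 1$, the denominator is comparable to $\alpha\sqrt h|x|^l$. The negative term is then of order $-\overline\lambda_1 h|x|^{l+2}/(1+\alpha\sqrt h|x|^l)$, which is at least of order $-\overline\lambda_1\sqrt h|x|^2/\alpha$. The quadratic term is of order $\hat K^2 h|x|^{2}/\alpha^2$, and the restriction $h\le 4\alpha^2(\overline\lambda_1-2\overline\lambda_2)^2/(9\hat K^4(1+2\alpha)^2)$ is exactly what lets the negative term dominate. In the other case, $\alpha\sqrt h|x|^l<1$, the denominator lies in $[1,2)$. Here $-\overline\lambda_1|x|^{l+2}h$ dominates $h^2|x|^{2l+2}\le h^{3/2}|x|^{l+2}/\alpha$ once $h$ is small, and $-|x|^{l+2}\le -|x|^2+1$ yields the linear contraction. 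The $\mathbb W_1^2$ term carries the coefficient $2\overline\lambda_2$ plus $O(h)$. Since $\overline\lambda_1>2\overline\lambda_2$, and since exchangeability and Jensen give $\mathbb E\,\mathbb W_1^{2p}(\widetilde\mu_n,\delta_0)\le \mathbb E|\overline X_n|^{2p}$, it is absorbed by the contraction, exactly as in \eqref{lemma4.1-eq2}. The cross term $\<bh/(\cdot),\Delta M_n\>$ has zero conditional mean; in the higher powers it is controlled by $|bh/(1+\alpha\sqrt h|x|^l)|\le C\sqrt h(1+|x|)$, which follows from the taming together with \eqref{b_growth}.

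With this in hand, I raise the identity to the power $p$ and expand binomially as in \eqref{lemma4.1-eq1}, writing $|\overline X_{n+1}|^2=A_n+U_n$, where
\[
A_n\le (1-\gamma h)|\overline X_n|^2+Ch\,\mathbb W_1^2(\widetilde\mu_n,\delta_0)+Ch .
\]
The linear term in $U_n$ has conditional mean at most $Ch$. The terms of order $k\ge 2$ in $U_n$ contribute $Ch(1+|\overline X_n|^{2p-2})$, which Young's inequality converts into $\varepsilon h|\overline X_n|^{2p}+Ch$. Taking expectations gives the recursion $\mathbb E|\overline X_{n+1}|^{2p}\le(1-\gamma' h)\mathbb E|\overline X_n|^{2p}+Ch$, and iterating it gives $\mathbb E|\overline X_n|^{2p}\le \mathbb E|\overline X_0|^{2p}+C/\gamma'$. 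For non-even $p$ I apply H\"older's inequality as described at the start.

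I expect the main obstacle to be the first case of the drift bound, where $\alpha\sqrt h|x|^l$ is large. There the constants must be tracked carefully so that the quadratic remainder $h^2|b|^2/(\cdot)^2$ is beaten uniformly in $x$ using only the stated restriction on $h^*$. I also have to check that the $\mathbb W_1$ contributions are absorbed at every power $p$, so that the constants stay uniform in $n$.
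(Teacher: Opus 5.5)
Your proposal is essentially the paper's proof. It has the same steps: squaring \eqref{TEM-dis}; a one-step drift bound whose $h$-uniform contraction constant exceeds the coefficient $2\overline\lambda_2+3\hat K^2h$ of $\mathbb{W}_1^2$; binomial expansion of the $p$-th power with Young's inequality and exchangeability; iteration of the recursion; and H\"older's inequality for non-even exponents. The one difference is that the paper needs no case split: it uses the uniform bound $\frac{h|x|^{2l+2}}{(1+\alpha\sqrt h|x|^l)^2}\le\frac{\sqrt h}{\alpha}\cdot\frac{|x|^{l+2}}{1+\alpha\sqrt h|x|^l}$ to get the contraction $\frac12\big(2\overline\lambda_1-3\hat K^2\sqrt h/\alpha\big)|x|^2$, and this is precisely where $h^*$ comes from. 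Your case-wise comparisons, notably discarding the denominator on the quadratic term when $\alpha\sqrt h|x|^l<1$, lose a factor $2$ in the correction $3\hat K^2\sqrt h/\alpha$ and would in general require $h$ smaller than $h^*$, so use the combined bound instead.
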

\begin{proof}
For $n\ge 0$, based on \eqref{TEM-dis}, 
\begin{align*}
&|\overline X_{(n+1)h}^{i,N,h}|^2\notag\\
=&|\overline X_{nh}^{i,N,h}|^2+\left(2\left\langle \overline X_{nh}^{i,N,h},\frac{b(\overline X_{nh}^{i,N,h},\widetilde{\mu}_{nh}^{\overline X,N,h})}{1+\alpha\sqrt{h}|\overline X_{nh}^{i,N,h}|^l}\right\rangle+\left|\frac{b(\overline X_{nh}^{i,N,h},\widetilde{\mu}_{nh}^{\overline X,N,h})}{1+\alpha\sqrt{h}|\overline X_{nh}^{i,N,h}|^l}\right|^2h\right)h\notag\\
&+\left|\sigma_0\Delta B_n^i+\sigma(\overline X_{nh}^{i,N,h},\widetilde{\mu}_{nh}^{\overline X,N,h})\Delta \overline B_n^i+\sigma_1\Delta W_n+\overline\sigma(\overline X_{nh}^{i,N,h},\widetilde{\mu}_{nh}^{\overline X,N,h})\Delta \overline{W}_n\right|^2\notag\\
&+2\left\langle \overline X_{nh}^{i,N,h}+\frac{b(\overline X_{nh}^{i,N,h},\widetilde{\mu}_{nh}^{\overline X,N,h})}{1+\alpha\sqrt{h}|\overline X_{nh}^{i,N,h}|^l}h,\sigma_0\Delta B_n^i+\sigma(\overline X_{nh}^{i,N,h},\widetilde{\mu}_{nh}^{\overline X,N,h})\Delta \overline B_n^i+\sigma_1\Delta W_n+\overline\sigma(\overline X_{nh}^{i,N,h},\widetilde{\mu}_{nh}^{\overline X,N,h})\Delta \overline{W}_n\right\rangle\notag\\
=:&|\overline X_{nh}^{i,N,h}|^2+\Lambda^i(\overline{\bf X}_{nh}^{N,h})h+\widetilde \Lambda^i(\overline{\bf X}_{nh}^{N,h})+\overline \Lambda^i(\overline{\bf X}_{nh}^{N,h}),
\end{align*}
where $\overline{\bf X}_{nh}^{N,h}=(\overline X_{nh}^{1,N,h},\cdots,\overline X_{nh}^{N,N,h})$.

By using Assumption \ref{N2-1} and \eqref{b_growth}, it holds that 
\begin{align*}
&2\left\langle x,\frac{b(x,\mu)}{1+\alpha\sqrt{h}|x|^l}\right\rangle+\left|\frac{b(x,\mu)}{1+\alpha\sqrt{h}|x|^l}\right|^2h\notag\\
%&\le  -\frac{|x|^l}{1+\sqrt{h}|x|^l}\left(2\overline\lambda_1-3\hat K^2\sqrt{h}\right)|x|^2+2\overline\lambda_2+3\hat K^2h+3\hat K^2h\mathbb{W}_1^2(\mu,\delta_0)\notag\\
&\le  -\frac{|x|^l}{1+\alpha\sqrt{h}|x|^l}\left(2\overline\lambda_1-\frac{3\hat K^2\sqrt{h}}{\alpha}\right)|x|^2\mathbbm{1}_{\{|x|\ge  1\}}+2\overline\lambda_2\mathbb{W}_1^2(\mu,\delta_0)+2\overline\lambda_3+3\hat K^2h+3\hat K^2h\mathbb{W}_1^2(\mu,\delta_0)\notag\\
&\le  -\frac{1}{1+\alpha\sqrt{h}}\left(2\overline\lambda_1-\frac{3\hat K^2\sqrt{h}}{\alpha}\right)|x|^2+2\overline\lambda_1+2\overline\lambda_3+(2\overline\lambda_2+3\hat K^2h)\mathbb{W}_1^2(\mu,\delta_0)+3\hat K^2h\notag\\
%&\le  -\frac{1}{1+\sqrt{h}}\left(2\overline\lambda_1-9\hat K^2\sqrt{h}\right)|x|^2+2\overline\lambda_2+3\hat K^2h(\mathbb{W}_1^2(\mu,\delta_0)-|x|^2)+2\overline\lambda_1\notag\\
&\le  -\frac{\kappa}{4}|x|^2+(2\overline\lambda_2+3\hat K^2h)(\mathbb{W}_1^2(\mu,\delta_0)-|x|^2)+2\overline\lambda_1+2\overline\lambda_3+3\hat K^2h,
\end{align*}
where $\kappa=2(2\overline\lambda_1-4\overline\lambda_2-\frac{3\hat K^2\sqrt{h}}{\alpha}-6\hat K^2h)>0$ for $h\in(0,h^*]$, which gives
\begin{align*}
\Lambda^i(\overline{\bf X}_{nh}^{N,h})&\le  -\frac{\kappa}{4}|\overline X_{nh}^{i,N,h}|^2+(2\overline\lambda_2+3\hat K^2h)\left(\mathbb{W}_1^2(\widetilde{\mu}_{nh}^{\overline X,N,h},\delta_0)-|\overline X_{nh}^{i,N,h}|^2\right)+2\overline\lambda_1+2\overline\lambda_3+3\hat K^2h.
\end{align*}
Then
\begin{align*}
|\overline X_{(n+1)h}^{i,N,h}|^2\le&\left(1-\frac{\kappa}{4}h-2\overline\lambda_2h-3\hat K^2h^2\right)|\overline X_{nh}^{i,N,h}|^2+(2\overline\lambda_2+3\hat K^2h)h\mathbb{W}_1^2(\widetilde{\mu}_{nh}^{\overline X,N,h},\delta_0)\notag\\
&+(2\overline\lambda_1+2\overline\lambda_3+3\hat K^2h)h+\widetilde \Lambda^i(\overline{\bf X}_{nh}^{N,h})+\overline \Lambda^i(\overline{\bf X}_{nh}^{N,h}),
\end{align*}
where the factor $1-\frac{\kappa}{4}h-2\overline\lambda_2h-3\hat K^2h^2$ is positive by taking $h\in(0,h^*]$. For any integer $p\ge 3$,
\begin{align}\label{lemma4.3111111}
|\overline X_{(n+1)h}^{i,N,h}|^{2p}\le&\left(\left(1-\frac{\kappa}{4}h-2\overline\lambda_2h-3\hat K^2h^2\right)|\overline X_{nh}^{i,N,h}|^2+(2\overline\lambda_2h+3\hat K^2h^2)\mathbb{W}_1^2(\widetilde{\mu}_{nh}^{\overline X,N,h},\delta_0)\right)^p\notag\\
&+p\left(\left(1-\frac{\kappa}{4}h-2\overline\lambda_2h-3\hat K^2h^2\right)|\overline X_{nh}^{i,N,h}|^2+(2\overline\lambda_2h+3\hat K^2h^2)\mathbb{W}_1^2(\widetilde{\mu}_{nh}^{\overline X,N,h},\delta_0)\right)^{p-1}\notag\\
&\quad\times\left((2\overline\lambda_1+2\overline\lambda_3+3\hat K^2h)h+\widetilde \Lambda^i(\overline{\bf X}_{nh}^{N,h})+\overline \Lambda^i(\overline{\bf X}_{nh}^{N,h})\right)\notag\\
&+\sum_{k=0}^{p-2}C_p^k\left(\left(1-\frac{\kappa}{4}h-2\overline\lambda_2h-3\hat K^2h^2\right)|\overline X_{nh}^{i,N,h}|^2+(2\overline\lambda_2h+3\hat K^2h^2)\mathbb{W}_1^2(\widetilde{\mu}_{nh}^{\overline X,N,h},\delta_0)\right)^k\notag\\
&\quad\times\left((2\overline\lambda_1+2\overline\lambda_3+3\hat K^2h)h+\widetilde \Lambda^i(\overline{\bf X}_{nh}^{N,h})+\overline \Lambda^i(\overline{\bf X}_{nh}^{N,h})\right)^{p-k}\notag\\
=&: \Theta^i(\overline{\bf X}_{nh}^{N,h})+\widehat\Theta^i(\overline{\bf X}_{nh}^{N,h})+\overline\Theta^i(\overline{\bf X}_{nh}^{N,h}).
\end{align}
Note that $\overline{X}_{nh}^{1,N,h},\dots, \overline{X}_{nh}^{N,N,h}$ are identical distribution, the binomial theorem and the Young inequality yield that
\begin{align}\label{lemma4.3111112}
&\mathbb{E}\left\{\Theta^i(\overline{\bf X}_{nh}^{N,h})\Big\vert\mathcal{F}_0\right\}\notag\\
&= \mathbb{E}\left\{\sum_{k=0}^{p}C_p^k\left(1-\frac{\kappa}{4}h-2\overline\lambda_2h-3\hat K^2h^2\right)^k|\overline X_{nh}^{i,N,h}|^{2k}\left((2\overline\lambda_2h+3\hat K^2h^2)\mathbb{W}_1^2(\widetilde{\mu}_{nh}^{\overline X,N,h},\delta_0)\right)^{p-k}\Big\vert\mathcal{F}_0\right\}\notag\\
&\le \sum_{k=0}^{p}C_p^k\left(1-\frac{\kappa}{4}h-2\overline\lambda_2h-3\hat K^2h^2\right)^k(2\overline\lambda_2h+3\hat K^2h^2)^{p-k}\mathbb{E}\left\{|\overline X_{nh}^{i,N,h}|^{2p}\vert\mathcal{F}_0\right\}\notag\\
&=\left(1-\frac{\kappa}{4}h\right)^{p}\mathbb{E}\left\{|\overline X_{nh}^{i,N,h}|^{2p}\vert\mathcal{F}_0\right\}\notag\\
&\le\left(1-\frac{\kappa}{4}h\right)\mathbb{E}\left\{|\overline X_{nh}^{i,N,h}|^{2p}\vert\mathcal{F}_0\right\},
\end{align}
where the last display is hold thanks to $1-\frac{\kappa}{4}h\in(0,1)$. Due to $$\mathbb{E}\left[\Delta B_n^i\vert\mathcal{F}_{nh}\right]=\mathbb{E}\left[\Delta \overline B_n^i\vert\mathcal{F}_{nh}\right]=\mathbb{E}\left[\Delta W_n\vert\mathcal{F}_{nh}\right]=\mathbb{E}\left[\Delta \overline W_n\vert\mathcal{F}_{nh}\right]=0,$$ $\mathbb{E}\left[|\Delta B_n^i|^2\vert\mathcal{F}_{nh}\right]=\mathbb{E}\left[|\Delta W_n|^2\vert\mathcal{F}_{nh}\right]=dh$, $\mathbb{E}\left[|\Delta W_n|^2\vert\mathcal{F}_{nh}\right]=m_1h$, and $\mathbb{E}\left[|\Delta \overline W_n|^2\vert\mathcal{F}_{nh}\right]=m_2h$, it follows from the tower property of the conditional expectation, Assumption \ref{N1} and the Young inequality that there is a constant $C>0$ such that
\begin{align}\label{lemma4.311113}
&\mathbb{E}\left\{\widehat\Theta^i(\overline{\bf X}_{nh}^{N,h})\Big\vert\mathcal{F}_0\right\}\notag\\&=p\mathbb{E}\Bigg\{\left(\left(1-\frac{\kappa}{4}h-2\overline\lambda_2h-3\hat K^2h^2\right)|\overline X_{nh}^{i,N,h}|^2+(2\overline\lambda_2h+3\hat K^2h^2)\mathbb{W}_1^2(\widetilde{\mu}_{nh}^{\overline X,N,h},\delta_0)\right)^{p-1}\notag\\
&\quad\qquad\times\left((2\overline\lambda_1+2\overline\lambda_3+3\hat K^2h)h+\mathbb{E}\left[\widetilde \Lambda^i(\overline{\bf X}_{nh}^{N,h})\vert\mathcal{F}_{nh}\right]+\mathbb{E}\left[\overline \Lambda^i(\overline{\bf X}_{nh}^{N,h})\vert\mathcal{F}_{nh}\right]\right)\Big\vert\mathcal{F}_0\Bigg\}\notag\\
&\le Ch\mathbb{E}\Bigg\{\left(\left(1-\frac{\kappa}{4}h-2\overline\lambda_2h-3\hat K^2h^2\right)|\overline X_{nh}^{i,N,h}|^2+(2\overline\lambda_2h+3\hat K^2h^2)\mathbb{W}_1^2(\widetilde{\mu}_{nh}^{\overline X,N,h},\delta_0)\right)^{p-1}\Big\vert\mathcal{F}_0\Bigg\}\notag\\
&\le Ch+\frac{\kappa h}{16}\mathbb{E}\left\{|\overline X_{nh}^{i,N,h}|^{2p}\vert\mathcal{F}_0\right\},
\end{align}
and similarly, together with the fact that the conditional expectations of the increments $|\Delta B_n^i|^{p-k}$, $|\Delta \overline B_n^i|^{p-k}$, $|\Delta W_n|^{p-k}$ and $|\Delta \overline W_n|^{p-k}$ contributes at least the order $h$ for $k\le p-2$, and the young inequality, leads to 
\begin{align}\label{lemma4.3111114}
\mathbb{E}\left\{\overline\Theta^i(\overline{\bf X}_{nh}^{N,h})\Big\vert\mathcal{F}_0\right\}\le Ch+\frac{\kappa h}{16}\mathbb{E}\left\{|\overline X_{nh}^{i,N,h}|^{2p}\vert\mathcal{F}_0\right\}.
\end{align}
Combining \eqref{lemma4.3111111}-\eqref{lemma4.3111114}, it follows that
\begin{align}\label{lemma4.3111115}
\mathbb{E}\left(|\overline X_{(n+1)h}^{i,N,h}|^{2p}\vert \mathcal{F}_0\right)\le\left(1-\frac{\kappa}{8}h\right)\mathbb{E}\left\{|\overline X_{nh}^{i,N,h}|^{2p}\vert\mathcal{F}_0\right\}+Ch,
\end{align}
hence, via an inductive argument, we arrive at
\begin{align*}
\mathbb{E}[|\overline X_{(n+1)h}^{i,N, h}|^{2p}|\mathcal F_0]
\le&\left(1-\frac{\kappa}{8}h\right)^{n+1}|\overline X_0^{i,N,h}|^{2p}+\frac{Ch}{\frac{\kappa}{8}h}.
\end{align*}
By employing the inequality: $a^r\le {\rm e}^{-(1-a)r}$ for $a,r>0$, yields
\begin{align*}
\mathbb{E}[|\overline X_{(n+1)h}^{i,N, h}|^{2p}|\mathcal F_0]\le {\rm e}^{-\kappa(n+1)h/8}|\overline X_0^{i,N,h}|^{2p}+C.
\end{align*}
For $p\in [1,6)$, it follows from the H\"older inequality and the inequality:$(a+b)^r\le a^r+b^r$ for $a,b>0$ and $r\in(0,1)$ that 
\begin{align*}
\mathbb{E}[|\overline X_{(n+1)h}^{i,N, h}|^p|\mathcal F_0]\le \left(\mathbb{E}[|\overline X_{(n+1)h}^{i,N, h}|^6|\mathcal F_0]\right)^{p/6}\le {\rm e}^{-\kappa(n+1)h/48}|\overline X_0^{i,N,h}|^p+C.
\end{align*}
For $p>6$ which is not an even number, 
\begin{align*}
\mathbb{E}[|\overline X_{(n+1)h}^{i,N, h}|^p|\mathcal F_0]\le \left(\mathbb{E}|\overline X_{(n+1)h}^{i,N, h}|^{2\lceil p/2\rceil}\right)^{\frac{p}{2\lceil p/2\rceil}}
\le {\rm e}^{-3\kappa(n+1)h/32}|\overline X_0^{i,N,h}|^p+C.
\end{align*}
Hence, it holds that for any $p\ge 1$,
\begin{align}\label{lemma4.1-eq6}
\mathbb{E}[|\overline X_{(n+1)h}^{i,N, h}|^p|\mathcal F_0]\le {\rm e}^{-\kappa(n+1)h/48}|\overline X_0^{i,N, h}|^p+C\le |\overline X_0^{i,N,h}|^p+C, ~\forall n\in \mathbb{N}.
\end{align}
Consequently, it follows from the property of conditional expectation that 
\begin{align*}
\mathbb{E}|\overline X_{(n+1)h}^{i,N, h}|^p\le C(1+\mathbb{E}|\overline X_0^{i,N,h}|^p), ~\forall n\in \mathbb{N}.
\end{align*}

\end{proof}

The uniform-in-time error estimate for the tamed EM method can also be obtained follows Theorem \ref{main theorem} and Lemma \ref{TEM-lemma1}.
\begin{theorem}\label{TEM-error-bound}
Let $\sigma_0\sigma_1\neq 0$. Suppose that Assumptions \ref{A1}, \ref{A2}, \ref{S1}-\ref{N2-1} hold with $\lambda_1>2\lambda_2$ and $\overline\lambda_1>2\overline\lambda_2$. Then there exists $C>0$ such that for any $\lambda_2\in [0,\lambda_2^*), L\in[0,L^*]$, $h\in (0,h^*]$,  $t>0$ and $i\in\mathbb{S}_N$,
\begin{align*}
\mathcal{W}_1\big(\mathcal{L}(\mu_t^i),\mathcal{L}(\overline \nu_t^{i,N,h})\big)\le C\left\{{\rm e}^{-\lambda_0t}\mathbb{W}_1(\mu,\nu)+\psi(N)+\left(1+\mathbb{E}|\overline X_0^{i,N,h}|^{2(l+1)^2}\right)h^{1/2}\right\}
\end{align*}
in case of $\mathbb{E}|X_0^i|^{q}<\infty$ for some $q>1$ and $\mathbb{E}|\overline X_0^{i,N,h}|^{2(l+1)^2}<\infty$, where $h^*$ was defined in Lemma \ref{TEM-lemma1}, $\mu_t^i=\mathcal{L}(X_t^i\vert \mathcal{F}_t^W)$ stands for the regular conditional distribution of $X_t$, determined by \eqref{NIPS}, with the initial distribution $\mathcal{L}(X_0^i)=\mu$, $\overline\nu_t^{i,N,h}=\mathcal{L}(\overline X_t^{i,N,h}\vert \mathcal{F}_t^W)$ stands for the regular conditional distribution of $\overline X_t^{i,N,h}$, determined by \eqref{TEM}, with the initial distribution $\mathcal{L}(\overline X_0^{i,N,h})=\nu$, $\lambda_2^*, L^*$, and $\lambda_0$ were given in Theorem  \ref{main theorem}, $h^*$ was defined in Lemma \ref{TEM-lemma1}.
\end{theorem}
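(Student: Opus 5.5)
The plan is to mirror the proof of Theorem \ref{BEM-error-bound}: apply Theorem \ref{main theorem} with $X_t^{i,N}=\overline X_t^{i,N,h}$, $\widetilde b(x,\mu)=\frac{b(x,\mu)}{1+\alpha\sqrt{h}|x|^l}$ and $\theta_t=\overline\theta_t=t_h$. Everything then reduces to a uniform-in-time bound of the form
\begin{align*}
\mathbb{E}\Big|b(\overline X_t^{i,N,h},\widetilde{\mu}_t^{\overline X,N,h})-\frac{b(\overline X_{t_h}^{i,N,h},\widetilde{\mu}_{t_h}^{\overline X,N,h})}{1+\alpha\sqrt{h}|\overline X_{t_h}^{i,N,h}|^l}\Big|\le C\left(1+\mathbb{E}|\overline X_0^{i,N,h}|^{2(l+1)^2}\right)h^{1/2}.
\end{align*}
Since $\int_0^t{\rm e}^{-\lambda_0(t-s)}{\rm d}s\le 1/\lambda_0$, this bound gives the claimed estimate directly.

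\textbf{Step 1: moment bounds on the whole time line.} Lemma \ref{TEM-lemma1} controls $p$-th moments at the grid points $nh$. For $t\in[nh,(n+1)h)$, the continuous scheme \eqref{TEM} gives
\begin{align*}
\overline X_t^{i,N,h}-\overline X_{nh}^{i,N,h}=\widetilde b(\cdot)(t-nh)+\sigma_0(B_t^i-B_{nh}^i)+\cdots.
\end{align*}
We bound the drift using \eqref{b_growth} together with $\mathbb{E}\mathbb{W}_1^p(\widetilde\mu^{\overline X,N,h}_{nh},\delta_0)\le\mathbb{E}|\overline X^{i,N,h}_{nh}|^p$, which holds by exchangeability. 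The diffusion terms are controlled by Assumption \ref{N1} and the independence of the increments from $\mathcal F_{nh}$. This yields $\mathbb{E}|\overline X_t^{i,N,h}|^p\le C(1+\mathbb{E}|\overline X_0^{i,N,h}|^{p(l+1)})$. It also yields the one-step increment estimate $\mathbb{E}|\overline X_t^{i,N,h}-\overline X_{t_h}^{i,N,h}|^2\le Ch(1+\mathbb{E}|\overline X_0^{i,N,h}|^{2(l+1)})$, where the drift part contributes order $h^2$ and the noise part order $h$.

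\textbf{Step 2: split the drift error.} We write the difference as the sum of three terms:
\begin{align*}
&\big[b(\overline X_t,\widetilde\mu_t)-b(\overline X_{t_h},\widetilde\mu_t)\big]+\big[b(\overline X_{t_h},\widetilde\mu_t)-b(\overline X_{t_h},\widetilde\mu_{t_h})\big]\\
&\qquad+b(\overline X_{t_h},\widetilde\mu_{t_h})\frac{\alpha\sqrt h|\overline X_{t_h}|^l}{1+\alpha\sqrt h|\overline X_{t_h}|^l}.
\end{align*}
\begin{itemize}
\item The first term is handled by Assumption \ref{S1}, Hölder's inequality and Step 1. It is at most $C(1+\mathbb{E}|\overline X_t|^{2l}+\mathbb{E}|\overline X_{t_h}|^{2l})^{1/2}(\mathbb{E}|\overline X_t-\overline X_{t_h}|^2)^{1/2}=O(h^{1/2})$.
\item The second term is at most $\lambda_2\mathbb{E}\mathbb{W}_1(\widetilde\mu_t,\widetilde\mu_{t_h})\le\lambda_2\mathbb{E}|\overline X_t-\overline X_{t_h}|=O(h^{1/2})$, using the natural coupling of the two empirical measures.
\item The taming term is bounded by $\alpha\sqrt h\,|b(\overline X_{t_h},\widetilde\mu_{t_h})||\overline X_{t_h}|^l$. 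By \eqref{b_growth} its expectation is at most $C\sqrt h(1+\mathbb{E}|\overline X_{t_h}|^{2l+1}+\mathbb{E}|\overline X_{t_h}|^{l}\mathbb{W}_1(\widetilde\mu_{t_h},\delta_0))$, which is again $O(h^{1/2})$ by Lemma \ref{TEM-lemma1} and Young's inequality.
\end{itemize}
Tracking the largest moment needed across the three terms, each is dominated by $C(1+\mathbb{E}|\overline X_0^{i,N,h}|^{2(l+1)^2})$.

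\textbf{Main obstacle.} The delicate point is the moment bookkeeping: Step 1 loses a factor $(l+1)$ in the exponent, and the products in Step 2 then need moments of order up to $2(l+1)^2$. We must also check that Lemma \ref{TEM-lemma1} applies for $h\le h^*$ at that order $p$, with constants independent of $n$, $N$ and $t$. Once this uniform moment control is secured, the conclusion follows from Theorem \ref{main theorem} exactly as in the backward EM case.
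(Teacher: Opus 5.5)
Your proposal is correct and follows the paper's proof essentially step for step. The shared steps are: moment bounds off the grid via \eqref{TEM}, \eqref{b_growth}, exchangeability and Lemma \ref{TEM-lemma1}; the $O(h)$ bound on $\mathbb{E}|\overline X_t^{i,N,h}-\overline X_{t_h}^{i,N,h}|^2$; splitting the drift error into a state part (Assumption \ref{S1} plus H\"older), a measure part ($\lambda_2\mathbb{W}_1$ via the natural coupling) and the taming defect $\alpha\sqrt{h}|x|^l|b|$; and finally Theorem \ref{main theorem} with $\theta_t=\overline\theta_t=t_h$, with the same moment bookkeeping up to order $2(l+1)^2$.
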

\begin{proof}
For $t\ge 0$, there always exists a constant $n\in\mathbb{N}$ such that $t_h=\lfloor t/h \rfloor h=nh$ and $t\in[nh,(n+1)h)$, Note that $\overline X_t^{1,N,h}, \cdots, \overline X_t^{N,N,h}$ are identically distributed, and thereby $\mathbb{E}\mathbb{W}_1^p(\widetilde{\mu}_{nh}^{\overline X,N,h},\delta_0)\le \mathbb{E}|\overline X_{nh}^{i,N,h}|^p$ for any $i\in\mathbb{S}_N$, then for any $p>0$, from \eqref{TEM} and \eqref{b_growth}, using Lemma \ref{TEM-lemma1}, 
\begin{align}\label{Eq-lemma_4.4-1}
\mathbb{E}|\overline X_t^{i,N, h}|^p\le &C\mathbb{E}|\overline X_{nh}^{i,N, h}|^p+Ch^p\mathbb{E}|b(\overline X_{nh}^{i,N,h},\widetilde{\mu}_{nh}^{\overline X,N,h})|^p+C|\sigma_0|^p\mathbb{E}|B_t^i-B_{nh}^i|^p\notag\\
&+C\mathbb{E}\left[|\sigma(\overline X_{nh}^{i,N,h},\widetilde{\mu}_{nh}^{\overline X,N,h})\vert^p\mathbb{E}\left(|\overline B_t^i-\overline B_{nh}^i|^p\big|\mathcal{F}_{nh}\right)\right]+C|\sigma_1|^p\mathbb{E}|W_t-W_{nh}|^p\notag\\
&+C\mathbb{E}\left[|\overline\sigma(\overline X_{nh}^{i,N,h},\widetilde{\mu}_{nh}^{\overline X,N,h})|^p\mathbb{E}\left(|\overline{W}_t-\overline W_{nh}|^p\big|\mathcal{F}_{nh}\right)\right]\notag\\
\le &C\left(1+\mathbb{E}|\overline X_{nh}^{i,N,h}|^{p(l+1)}\right)\notag\\
\le &C\left(1+\mathbb{E}|\overline X_0^{i,N,h}|^{p(l+1)}\right).
\end{align}
Based on \eqref{TEM}, using \eqref{b_growth}, Assumption \ref{N1} and Lemma \ref{TEM-lemma1}, we have
\begin{align*}
&\mathbb{E}|\overline X_t^{i,N,h}-\overline X_{nh}^{i,N,h}|^2\notag\\
\le &C\mathbb{E}\vert b(\overline X_{nh}^{i,N,h},\widetilde{\mu}_{nh}^{\overline X,N,h})\vert^2(t-nh)^2+C\sigma_0^2\mathbb{E}|B_t^i-B_{nh}^i|^2+C\sigma_1^2\mathbb{E}|W_t-W_{nh}|^2\notag\\
&+C\mathbb{E}\left(\vert\sigma(\overline X_{nh}^{i,N,h},\widetilde{\mu}_{nh}^{\overline X,N,h})\vert^2|\overline B_t^i-\overline B_{nh}^i|^2\right)+C\mathbb{E}\left(|\overline\sigma(\overline X_{nh}^{i,N,h},\widetilde{\mu}_{nh}^{\overline X,N,h})|^2|\overline{W}_t-\overline W_{nh}|^2\right)\notag\\
\le &Ch\left(\mathbb{E}\vert b(\overline X_{nh}^{i,N,h},\widetilde{\mu}_{nh}^{\overline X,N,h})\vert^2+\sigma_0^2+\sigma_1^2+\mathbb{E}\vert \sigma(\overline X_{nh}^{i,N,h},\widetilde{\mu}_{nh}^{\overline X,N,h})\vert^2+\mathbb{E}|\overline\sigma(\overline X_{nh}^{i,N,h},\widetilde{\mu}_{nh}^{\overline X,N,h})|^2\right)\notag\\
\le &Ch\left(1+\mathbb{E}|\overline X_{nh}^{i,N,h}|^{2(l+1)}\right)\notag\\
\le &Ch\left(1+\mathbb{E}|\overline X_0^{i,N,h}|^{2(l+1)}\right).
\end{align*}
Recall that $\widetilde b(x,\mu)=\frac{b(x,\mu)}{1+\alpha\sqrt{h}|x|^l}$ with $l$ being defined in Assumption \ref{S1}, by using Assumption \ref{S1} and \ref{A1}, we arrive at
\begin{align*}
&\mathbb{E}|b(\overline X_t^{i,N,h},\widetilde{\mu}_t^{\overline X,N,h})-\widetilde{b}(\overline X_{t_h}^{i,N,h},\widetilde{\mu}_{t_h}^{\overline X,N,h})|\notag\\
\le& \mathbb{E}\left|b(\overline X_t^{i,N,h},\widetilde{\mu}_t^{\overline X,N,h})-b(\overline X_{nh}^{i,N,h},\widetilde{\mu}_{nh}^{\overline X,N,h})\right|+\mathbb{E}\left|b(\overline X_{nh}^{i,N,h},\widetilde{\mu}_{nh}^{\overline X,N,h})-\frac{b(\overline X_{nh}^{i,N,h},\widetilde{\mu}_{nh}^{\overline X,N,h})}{1+\alpha\sqrt{h}|\overline X_{nh}^{i,N,h}|^l}\right|\notag\\
\le&K\mathbb{E}\left[(1+|\overline X_t^{i,N,h}|^l+|\overline X_{nh}^{i,N,h}|^l)\vert \overline X_t^{i,N,h}-\overline X_{nh}^{i,N,h}\vert+\lambda_2\mathbb{W}_1(\widetilde{\mu}_t^{\overline X,N,h},\widetilde{\mu}_{nh}^{\overline X,N,h})\right]\notag\\
&+\alpha\sqrt{h}\mathbb{E}\left[|\overline X_{nh}^{i,N,h}|^l|b(\overline X_{nh}^{i,N,h},\widetilde{\mu}_{nh}^{\overline X,N,h})|\right]\notag\\
\le&K\left(\mathbb{E}(1+|\overline X_t^{i,N,h}|^l+|\overline X_{nh}^{i,N,h}|^l)^2\right)^{1/2}\left(\mathbb{E}\vert \overline X_t^{i,N,h}-\overline X_{nh}^{i,N,h}\vert^2\right)^{1/2}+\lambda_2\left(\mathbb{E}\vert \overline X_t^{i,N,h}-\overline X_{nh}^{i,N,h}\vert^2\right)^{1/2}\notag\\
&+\alpha\hat K\sqrt{h}\mathbb{E}\left[|\overline X_{nh}^{i,N,h}|^l\left(1+|\overline X_{nh}^{i,N,h}|^{l+1}+\mathbb{W}_1(\widetilde{\mu}_{nh}^{\overline X,N,h},\delta_0)\right)\right]
\notag\\
\le&Ch^{1/2}\left(\mathbb{E}(1+|\overline X_t^{i,N,h}|^l+|\overline X_{nh}^{i,N,h}|^l)^2\right)^{1/2}\left(1+\mathbb{E}|\overline X_0^{i,N,h}|^{2(l+1)}\right)^{1/2}+Ch^{1/2}\left(1+\mathbb{E}|\overline X_0^{i,N,h}|^{2(l+1)}\right)^{1/2}\notag\\
&+\alpha\hat K\sqrt{h}\mathbb{E}\left[|\overline X_{nh}^{i,N,h}|^l\left(1+|\overline X_{nh}^{i,N,h}|^{l+1}+\mathbb{W}_1(\widetilde{\mu}_{nh}^{\overline X,N,h},\delta_0)\right)\right],
\end{align*}
together with \eqref{Eq-lemma_4.4-1} and Lemma \ref{TEM-lemma1}, it follows that 
\begin{align}\label{TEM-b-b_tilde}
\mathbb{E}|b(\overline X_t^{i,N,h},\widetilde{\mu}_t^{\overline X,N,h})-\widetilde{b}(\overline X_{t_h}^{i,N,h},\widetilde{\mu}_{t_h}^{\overline X,N,h})|\le Ch^{1/2}\left(1+\mathbb{E}|\overline X_0^{i,N,h}|^{2(l+1)^2}\right).
\end{align}
Applying Theorem \ref{main theorem} with $X_t^{i,N}=\overline X_t^{i,N,h}$, $\widetilde b(x,\mu)=\frac{b(x,\mu)}{1+\alpha\sqrt{h}|x|^l}$, $\theta_t=\overline \theta_t=t_h$, together with \eqref{TEM-b-b_tilde}, the assertion follows. 
\end{proof}
%%%-------------------------------------------------------------------------------------------------------------------------------------------------------

\subsection{Adaptive EM method}\label{Adaptive EM method}

For $t_{n+1}=t_n+h_n$ with $h_n$ is an adaptive time step-size, the adaptive EM method for the McKean-Vlasov SDE with common noise \eqref{NIPS} is constructed as
\begin{align}\label{AEM-dis}
\widetilde X_{t_{n+1}}^{i,N}=\widetilde X_{t_n}^{i,N}+b(\widetilde X_{t_n}^{i,N},\widetilde{\mu}_{t_n}^{\widetilde X,N})h_n+\sigma_0\Delta B_{t_n}^i+\sigma(\widetilde X_{t_n}^{i,N},\widetilde{\mu}_{t_n}^{\widetilde X,N})\Delta \overline B_{t_n}^i+\sigma_1\Delta W_{t_n}+\overline\sigma(\widetilde X_{t_n}^{i,N},\widetilde{\mu}_{t_n}^{\widetilde X, N})\Delta \overline W_{t_n},
\end{align}
with initial data $\widetilde X_0^{i,N}$ for all $i\in\mathbb{S}_N$, where $n\in \mathbb{N}_+$, $\Delta B_{t_n}^i=B_{t_{n+1}}^i-B_{t_n}^i$, $\Delta \overline B_{t_n}^i=\overline B_{t_{n+1}}^i-\overline B_{t_n}^i$, $\Delta W_{t_n}=W_{t_{n+1}}-W_{t_n}$, $\Delta \overline W_{t_n}=\overline W_{t_{n+1}}-\overline W_{t_n}$, $\widetilde{\mu}_{t_n}^{\widetilde X,N,h}=\frac{1}{N}\sum_{j=1}^N \delta_{\widetilde X_{t_n}^{j,N,h}}$, $h_n=\delta\min\{h_n^1, \cdots, h_n^N\}$ with $\delta\in(0,1)$, $h_n^i=h(\widetilde X_{t_n}^{i,N},\widetilde{\mu}_{t_n}^{\widetilde X,N})$ and
%\begin{align*}
%h(x,\mu):=\frac{1}{1+|b(x,\mu)|^2+|\sigma(x,\mu)|^2+|\overline\sigma(x,\mu)|^2}.
%\end{align*}
\begin{align*}
h(x,\mu):=\frac{1}{1+|b(x,\mu)|^2}.
\end{align*}
 Further set $\underline t=\max\{t_n:t_n\le t\}$, the continuous version of \eqref{AEM-dis} can be formulated as
\begin{align}\label{AEM-cont}
{\rm d}\widetilde X_t^{i,N}=b(\widetilde X_{\underline t}^{i,N},\widetilde{\mu}_{\underline t}^{\widetilde X, N}){\rm d}t+\sigma_0{\rm d}B_t^i+\sigma(\widetilde X_{\underline t}^{i,N},\widetilde{\mu}_{\underline t}^{\widetilde X,N}){\rm d}\overline B_t^i+\sigma_1{\rm d} W_t+\overline\sigma(\widetilde X_{\underline t}^{i,N},\widetilde{\mu}_{\underline t}^{\widetilde X,N}){\rm d}\overline{W}_t.
\end{align}
Therefore, \eqref{IPS} can cover \eqref{AEM-cont} once we set $\widetilde{b}=b$ and $\theta_t=\overline \theta_t=\underline t$.

\begin{lemma}
Suppose that Assumptions \ref{A1}, \ref{S1} and \ref{N1} hold, then $\lim_{n\to \infty}t_n=\infty$ almost surely.
\end{lemma}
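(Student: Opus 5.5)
We need to show $t_n\to\infty$ a.s. for the adaptive EM scheme \eqref{AEM-dis}. Since $h_n=\delta\min_i h(\widetilde X_{t_n}^{i,N},\widetilde\mu_{t_n}^{\widetilde X,N})>0$, the sequence $t_n$ is strictly increasing. So it suffices to argue by contradiction. Fix a path $\omega$ in the event $A:=\{\lim_n t_n=T<\infty\}$. On $A$ we have $\sum_n h_n<\infty$, hence $h_n\to 0$. Because $h(x,\mu)=1/(1+|b(x,\mu)|^2)$, this forces
\[
\max_i|b(\widetilde X_{t_n}^{i,N},\widetilde\mu_{t_n}^{\widetilde X,N})|\to\infty .
\]
By the growth bound \eqref{b_growth} together with $\mathbb{W}_1(\widetilde\mu_{t_n}^{\widetilde X,N},\delta_0)\le\max_j|\widetilde X_{t_n}^{j,N}|$, this in turn gives $\max_j|\widetilde X_{t_n}^{j,N}|\to\infty$ along the whole sequence. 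The remaining work is to show that this blow-up cannot happen in the finite time $T$ on a set of positive probability.

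The key estimate is a pathwise control of the increments. The drift contribution satisfies
\[
|b(\widetilde X_{t_n}^{i,N},\widetilde\mu_{t_n}^{\widetilde X,N})|\,h_n\le \delta\,\frac{|b|}{1+|b|^2}\le \tfrac{\delta}{2},
\]
so every drift step is uniformly bounded. Moreover $|b|h_n\le \sqrt{h_n}\sqrt{\delta}$, because $h_n|b|^2\le \delta$. The noise contributions are controlled on the compact interval $[0,T]$. For the additive parts, $\sigma_0 B^i$ and $\sigma_1 W$ are continuous, so their increments over $[t_n,t_{n+1}]\subset[0,T]$ are bounded by $2\sup_{s\le T}(|B_s^i|+|W_s|)<\infty$. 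For the multiplicative parts, Assumption \ref{N1} gives $|\sigma|,|\overline\sigma|\le\hat L$. Hence the stochastic integrals
\[
\int_0^{t}\sigma(\widetilde X_{\underline s}^{i,N},\cdot)\,{\rm d}\overline B_s^i,\qquad \int_0^{t}\overline\sigma(\widetilde X_{\underline s}^{i,N},\cdot)\,{\rm d}\overline W_s
\]
are well defined on $[0,\lim t_n)$; note that each $t_n$ is a stopping time, because $h_n$ is $\mathcal F_{t_n}$-measurable. These integrals are martingales with bounded integrands, so by the martingale convergence theorem (quadratic variation at most $\hat L^2 T$) they converge a.s. as $t\uparrow T$ and are therefore a.s. bounded on $A$. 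Summing the scheme, the noise part of $\widetilde X_{t_n}^{i,N}$ stays bounded on $A$, and only the accumulated drift $\sum_{k<n}b_kh_k$ remains.

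The main obstacle is the accumulated drift sum, since a crude bound of $\delta/2$ per step gives no control over infinitely many steps. I would first try a Lyapunov argument based on Assumption \ref{A1}, via \eqref{B1}. For large $|x|$ and bounded $\mathbb{W}_1$, the bound $\langle x,b\rangle\le-K_1|x|^2+\dots$ makes the deterministic step decrease $|x|^2$:
\[
|x+bh|^2=|x|^2+2h\langle x,b\rangle+h^2|b|^2\le|x|^2+2h\big(-K_1|x|^2+\tfrac{\lambda_2}{2}\mathbb W_1^2+K_2\big)+\delta h .
\]
This uses $h|b|^2\le\delta$, which holds thanks to the adaptive choice. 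Applying it to $V_n=\frac1N\sum_i|\widetilde X_{t_n}^{i,N}|^2$, and using $\mathbb W_1^2\le V_n$ together with $\lambda_1>2\lambda_2$ (so that $2K_1-\lambda_2>0$), I expect
\[
V_{n+1}\le V_n+Ch_n+\text{(noise cross terms and squared increments)} .
\]
Summing up to $n$, with $\sum h_n\le T$ on $A$ and the noise terms bounded as above (the cross terms $\langle \widetilde X,\Delta\text{noise}\rangle$ summed give a martingale plus bounded-variation terms; if needed, handle them by localizing with $\tau_R=\inf\{n:V_n>R\}$), I would conclude $\sup_n V_n<\infty$ a.s. on $A$. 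This contradicts $\max_j|\widetilde X_{t_n}^{j,N}|\to\infty$, so $\mathbb P(A)=0$. If the rigorous treatment of the cross terms becomes messy, the fallback is to take expectations of the localized, stopped sum, which gives $\mathbb E V_{n\wedge\tau_R}\le C(1+T)$ and hence $\mathbb P(\tau_R<\infty,\,A)\to0$ as $R\to\infty$.
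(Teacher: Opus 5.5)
Your reduction is sound. Since $h_n=\delta/(1+\max_i|b(\widetilde X_{t_n}^{i,N},\widetilde\mu_{t_n}^{\widetilde X,N})|^2)$, on $A=\{t_\infty:=\lim_n t_n<\infty\}$ you do get $V_n\to\infty$. The following are also correct: $h_n|b|^2\le\delta$, $|b|h_n\le\sqrt{\delta h_n}$, each $t_n$ is a stopping time, and the pure noise terms are pathwise bounded on $A$ (extend the integrands by $0$ after $t_\infty$).

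The gap is exactly at the step you flag and then leave open. The cross term $2\sum_{k<n}\langle \widetilde X_{t_k}^{i,N},\Delta\xi_k^i\rangle$, with $\Delta\xi_k^i$ the full noise increment on $[t_k,t_{k+1}]$, is a stochastic integral whose integrand is the very process whose explosion you want to rule out. Its quadratic variation is controlled only by $\int_0^{t}|\widetilde X_{\underline s}^{i,N}|^2\,{\rm d}s$, which may be infinite on $A$. So it is not ``bounded as above'', and ``martingale plus bounded terms'' does not give $\sup_n V_n<\infty$ pathwise on $A$. Localizing at $\tau_R$ only bounds the integrand before $\tau_R$; you still need a quantitative estimate showing $\tau_R=\infty$ on $A$.

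Your fallback is meant to supply that estimate, but it cannot be used as written. $T=\lim_n t_n$ is random and $A$ is not an adapted event, so $\mathbb{E}V_{n\wedge\tau_R}\le C(1+T)$ is not a meaningful bound. To repair it you would:
\begin{itemize}
\item fix a deterministic $T_0$ and also stop at $\sigma_{T_0}=\inf\{n:t_n>T_0\}$, so that $t_{n\wedge\sigma_{T_0}}\le T_0+\delta$ and $\sigma_{T_0}=\infty$ on $A\cap\{t_\infty\le T_0\}$;
\item bound $\mathbb{E}V_{n\wedge\tau_R\wedge\sigma_{T_0}}$ by a constant $C(T_0)$;
\item conclude $\mathbb{P}(A\cap\{t_\infty\le T_0\})\le C(T_0)/R\to0$.
\end{itemize}
Also, $\lambda_1>2\lambda_2$ is not a hypothesis of this lemma, so you cannot use $2K_1-\lambda_2>0$.

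The cleanest way to close the argument with your own ingredients avoids localization and dissipativity altogether. Since $t_n$ is a stopping time and $h_n$ is $\mathcal{F}_{t_n}$-measurable, the strong Markov property makes the noise increments conditionally centred with conditional covariance $h_nI$. Combine this with:
\begin{itemize}
\item $2\langle x,b(x,\mu)\rangle\le C(1+|x|^2+\mathbb{W}_1^2(\mu,\delta_0))$, from Assumption \ref{A1} alone;
\item $\mathbb{W}_1^2(\widetilde\mu_{t_n}^{\widetilde X,N},\delta_0)\le V_n$;
\item $h_n^2|b|^2\le\delta h_n$;
\item Assumption \ref{N1}.
\end{itemize}
This gives
\begin{align*}
\mathbb{E}\big[1+V_{n+1}\,\big|\,\mathcal{F}_{t_n}\big]\le (1+Ch_n)(1+V_n).
\end{align*}
Hence $Y_n:={\rm e}^{-Ct_n}(1+V_n)$ is a nonnegative supermartingale with respect to $(\mathcal{F}_{t_n})_{n\ge0}$ (multiply by $\mathbbm{1}_{\{V_0\le K\}}$ if $V_0$ is not integrable), so it converges a.s. On $A$, ${\rm e}^{-Ct_n}\to{\rm e}^{-Ct_\infty}>0$, so $\sup_nV_n<\infty$, which contradicts $V_n\to\infty$.

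With this step supplied, your proof is genuinely different from the paper's. The paper introduces a truncated scheme $(b_H,h_H)$ whose steps are bounded below, so $t_n^H\to\infty$ trivially. It proves $L^2$ and supremum-$L^2$ bounds on $[0,T]$ for the interpolant via It\^o's formula, Gronwall and BDG. It then uses that the two schemes coincide until the truncated path exceeds level $H/2$, bounding $\mathbb{P}(t_n\le T)$ by $\mathbb{P}(t_n^H\le T)$ plus a term vanishing as $H\to\infty$, and lets $n\to\infty$, then $H\to\infty$.

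Your route works directly on the random grid of the original scheme. It uses the adaptive bound $h_n|b|^2\le\delta$ to get a one-step Lyapunov inequality. It dispenses with the auxiliary scheme, the interpolation and BDG, and needs no moment assumption on the initial data.
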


\begin{proof}
For any $i\in\mathbb{S}_N$ and $H>0$, we define the truncated adaptive EM method of \eqref{NIPS} as follows 
\begin{align*}%\label{AEM-dis-H}
\begin{cases}
t_0^H=0, ~~\widetilde X_{t_0^H}^{i,N,H}=\widetilde X_0^{i,N}, ~~t_{n+1}^H=t_n^H+h_n^H, \\
\widetilde X_{t_{n+1}^H}^{i,N,H}=\widetilde X_{t_n^H}^{i,N,H}+b_H(\widetilde X_{t_n^H}^{i,N,H},\widetilde{\mu}_{t_n^H}^{\widetilde X,N,H})h_n^H+\sigma_0\Delta B_{t_n^H}^i+\sigma(\widetilde X_{t_n^H}^{i,N,H},\widetilde{\mu}_{t_n^H}^{\widetilde X,N,H})\Delta\overline B_{t_n^H}^i\\
\quad\qquad\qquad+\sigma_1\Delta W_{t_n^H}+\overline\sigma(\widetilde X_{t_n^H}^{i,N,H},\widetilde{\mu}_{t_n^H}^{\widetilde X, N,H})\Delta \overline W_{t_n^H},
\end{cases}
\end{align*}
where $h_n^H=\delta\min\{h_n^{1,H}, \cdots, h_n^{N,H}\}$ with $h_n^{i,H}=h_H(\widetilde X_{t_n^H}^{i,N,H},\widetilde{\mu}_{t_n^H}^{\widetilde X,N,H})$,
\begin{align*}
h_H(x,\mu)=
\begin{cases}
       h(x,\mu), & {\text{if}} ~~|x|^{2(l+1)}+\mathbb{W}_1^2(\mu,\delta_0)\le H,\\
       \frac{1}{1+H}, & {\text{if}}~~ |x|^{2(l+1)}+\mathbb{W}_1^2(\mu,\delta_0)>H,
\end{cases}
\end{align*}
\begin{align*}
b_H(x,\mu)=
\begin{cases}
       b(x,\mu), & {\text{if}} ~~|x|^{2(l+1)}+\mathbb{W}_1^2(\mu,\delta_0)\le H,\\
       \frac{x}{1+|x|^2}, & {\text{if}}~~ |x|^{2(l+1)}+\mathbb{W}_1^2(\mu,\delta_0)>H,
\end{cases}
\end{align*}
for $x\in\mathbb{R}^d$ and $\mu\in\mathcal{P}(\mathbb{R}^d)$, where $l$ is defined in Assumption \ref{S1}. Then, according to Remark \ref{Remark-1.1}, it can be checked that for all $x\in\mathbb{R}^d$ and $\mu\in\mathcal{P}(\mathbb{R}^d)$,
\begin{align}
%&|f_H(x,\mu)|h_H(x,\mu)\le \delta,\label{AEM-eq1}\\
|b_H(x,\mu)|^2&h_H(x,\mu)\le 1,\label{AEM-eq2}\\
\<x, b_H(x,\mu)\>\le  &1+\frac{\lambda_2}{2}\mathbb{W}_1^2(\mu,\delta_0)+K_2.\label{AEM-eq3}
\end{align}
Moreover, according to the definition of $h_n^H$ and \eqref{b_growth}, we have $t_{n+1}^H-t_n^H= h_n^H\ge \frac{\delta}{1+3\hat K^2(1+H)}$. Therefore, $\lim_{n\to \infty}t_n^H=\infty~ a.s.$

Next, we define $\underline t^H=\max\{t_n^H:t_n^H\le t\}$, the continuous interpolant process is defined by
\begin{align}\label{AEM-cont-H}
{\rm d}\widetilde X_t^{i,N,H}=b_H(\widetilde X_{\underline t^H}^{i,N,H},\widetilde{\mu}_{\underline t^H}^{\widetilde X, N,H}){\rm d}t+\sigma_0{\rm d}B_t^i+\sigma(\widetilde X_{\underline t^H}^{i,N,H},\widetilde{\mu}_{\underline t^H}^{\widetilde X,N,H}){\rm d}\overline B_t^i+\sigma_1{\rm d} W_t+\overline\sigma(\widetilde X_{\underline t^H}^{i,N,H},\widetilde{\mu}_{\underline t^H}^{\widetilde X,N,H}){\rm d}\overline{W}_t.
\end{align}
Using It\^o's formula and \eqref{AEM-eq3}, we have
\begin{align}\label{Eq-AEM}
|\widetilde X_t^{i,N,H}|^2\le&|\widetilde X_0^{i,N}|^2+\int_0^t  2\left(1+\frac{\lambda_2}{2}\mathbb{W}_1^2\left(\widetilde{\mu}_{\underline s^H}^{\widetilde X, N,H},\delta_0\right)+K_2\right){\rm d}s\notag\\
&+\int_0^t  2|\widetilde X_s^{i,N,H}-\widetilde X_{\underline s^H}^{i,N,H}||b_H(\widetilde X_{\underline s^H}^{i,N,H},\widetilde{\mu}_{\underline s^H}^{\widetilde X, N,H})|{\rm d}s\notag\\
&+\int_0^t \left(\sigma_0^2d+\sigma_1^2d+|\sigma(\widetilde X_{\underline s^H}^{i,N,H},\widetilde{\mu}_{\underline s^H}^{\widetilde X,N,H})|^2+|\overline\sigma(\widetilde X_{\underline s^H}^{i,N,H},\widetilde{\mu}_{\underline s^H}^{\widetilde X,N,H})|^2\right){\rm d}s+\mathcal{M}_t.
\end{align}
where 
\begin{align*}
\mathcal{M}_t=&\int_0^t  2\left\<\widetilde X_s^{i,N,H},\sigma_0{\rm d}B_s^i\right\>+\int_0^t  2\left\<\widetilde X_s^{i,N,H},\sigma(\widetilde X_{\underline s^H}^{i,N,H},\widetilde{\mu}_{\underline s^H}^{\widetilde X,N,H})\right\>{\rm d}\overline B_s^i\notag\\
&+\int_0^t  2\left\<\widetilde X_s^{i,N,H},\sigma_1{\rm d}W_s\right\>+\int_0^t  2\left\<\widetilde X_s^{i,N,H},\overline\sigma(\widetilde X_{\underline s^H}^{i,N,H},\widetilde{\mu}_{\underline s^H}^{\widetilde X,N,H})\right\>{\rm d}\overline W_s.
\end{align*}
Define $\tau_R=\inf\{t>0:\max_{i\in\mathbb{S}_N}|\widetilde X_t^{i,N,H}|>R\}$ for each $R>0$, then%and $\tau=s\wedge \tau_R$, 
\begin{align}\label{AEM-eq4}
\mathbb{E}|\widetilde X_{t\wedge \tau_R}^{i,N,H}|^2\le&\mathbb{E}|\widetilde X_0^{i,N}|^2+2\left(1+K_2\right)t+\lambda_2\mathbb{E}\int_0^t  \mathbb{W}_1^2(\widetilde{\mu}_{\underline \tau^H}^{\widetilde X, N,H},\delta_0){\rm d}s\notag\\
&+\mathbb{E}\int_0^t  2|\widetilde X_\tau^{i,N,H}-\widetilde X_{\underline \tau^H}^{i,N,H}||b_H(\widetilde X_{\underline \tau^H}^{i,N,H},\widetilde{\mu}_{\underline \tau^H}^{\widetilde X, N,H})|{\rm d}s\notag\\
&+\mathbb{E}\int_0^t \left(\sigma_0^2d+\sigma_1^2d+|\sigma(\widetilde X_{\underline \tau^H}^{i,N,H},\widetilde{\mu}_{\underline \tau^H}^{\widetilde X,N,H})|^2+|\overline\sigma(\widetilde X_{\underline \tau^H}^{i,N,H},\widetilde{\mu}_{\underline \tau^H}^{\widetilde X,N,H})|^2\right){\rm d}s,
\end{align}
where $\tau=s\wedge \tau_R$, $\underline \tau^H=\underline{s}^H\wedge \tau_R$. Using Assumption \ref{N1}, the isometry property of stochastic integrals and \eqref{AEM-eq2}, we can deduce that
\begin{align}\label{AEM-eq5}
&\mathbb{E}\left[|\widetilde X_\tau^{i,N,H}-\widetilde X_{\underline\tau^H}^{i,N,H}||b_H(\widetilde X_{\underline\tau^H}^{i,N,H},\widetilde{\mu}_{\underline\tau^H}^{\widetilde X, N,H})|\big|\mathcal{F}_{\underline\tau^H}\right]\notag\\
&\le |b_H(\widetilde X_{\underline\tau^H}^{i,N,H},\widetilde{\mu}_{\underline\tau^H}^{\widetilde X, N,H})|^2(\tau-\underline\tau^H)+|\sigma_0||b_H(\widetilde X_{\underline\tau^H}^{i,N,H},\widetilde{\mu}_{\underline\tau^H}^{\widetilde X, N,H})|\mathbb{E}\left[|B_\tau^i-B_{\underline\tau^H}^i|\big|\mathcal{F}_{\underline\tau^H}\right]\notag\\
&\quad+|\sigma(\widetilde X_{\underline\tau^H}^{i,N,H},\widetilde{\mu}_{\underline\tau^H}^{\widetilde X,N,H})||b_H(\widetilde X_{\underline\tau^H}^{i,N,H},\widetilde{\mu}_{\underline\tau^H}^{\widetilde X, N,H})|\mathbb{E}\left[|\overline B_\tau^i-\overline B_{\underline\tau^H}^i|\big|\mathcal{F}_{\underline\tau^H}\right]\notag\\
&\quad+|\sigma_1||b_H(\widetilde X_{\underline\tau^H}^{i,N,H},\widetilde{\mu}_{\underline\tau^H}^{\widetilde X, N,H})|\mathbb{E}\left[|W_\tau-W_{\underline\tau^H}|\big|\mathcal{F}_{\underline\tau^H}\right]\notag\\
&\quad+|\overline\sigma(\widetilde X_{\underline\tau^H}^{i,N,H},\widetilde{\mu}_{\underline\tau^H}^{\widetilde X,N,H})||b_H(\widetilde X_{\underline\tau^H}^{i,N,H},\widetilde{\mu}_{\underline\tau^H}^{\widetilde X, N,H})|\mathbb{E}\left[|\overline W_\tau-\overline W_{\underline\tau^H}|\big|\mathcal{F}_{\underline\tau^H}\right]\notag\\
%&\le |b_H(\widetilde X_{\underline\tau^H}^{i,N,H},\widetilde{\mu}_{\underline\tau^H}^{\widetilde X, N,H})|^2\delta h_H(\widetilde X_{\underline\tau^H}^{i,N,H},\widetilde{\mu}_{\underline\tau^H}^{\widetilde X, N,H})+|\sigma_0||b_H(\widetilde X_{\underline\tau^H}^{i,N,H},\widetilde{\mu}_{\underline\tau^H}^{\widetilde X, N,H})|\sqrt{d\delta h_H(\widetilde X_{\underline\tau^H}^{i,N,H},\widetilde{\mu}_{\underline \tau^H}^{\widetilde X, N,H})}\notag\\
%&\quad+|\sigma(\widetilde X_{\underline\tau^H}^{i,N,H},\widetilde{\mu}_{\underline\tau^H}^{\widetilde X,N,H})||b_H(\widetilde X_{\underline\tau^H}^{i,N,H},\widetilde{\mu}_{\underline\tau^H}^{\widetilde X, N,H})|\sqrt{d\delta h_H(\widetilde X_{\underline\tau^H}^{i,N,H},\widetilde{\mu}_{\underline \tau^H}^{\widetilde X, N,H})}\notag\\
%&\quad+|\overline\sigma(\widetilde X_{\underline\tau^H}^{i,N,H},\widetilde{\mu}_{\underline\tau^H}^{\widetilde X,N,H})||b_H(\widetilde X_{\underline\tau^H}^{i,N,H},\widetilde{\mu}_{\underline\tau^H}^{\widetilde X, N,H})|\sqrt{m\delta h_H(\widetilde X_{\underline\tau^H}^{i,N,H},\widetilde{\mu}_{\underline\tau^H}^{\widetilde X, N,H})}\notag\\
%&\le \delta+\sqrt{\delta}\left(|\sigma_0|\sqrt{d}+|\sigma(\widetilde X_{\underline \tau^H}^{i,N,H},\widetilde{\mu}_{\underline \tau^H}^{\widetilde X,N,H})|+|\overline\sigma(\widetilde X_{\underline \tau^H}^{i,N,H},\widetilde{\mu}_{\underline\tau^H}^{\widetilde X,N,H})|\right)\notag\\
&\le C\sqrt{\delta}.
\end{align}
Substituting \eqref{AEM-eq5} into \eqref{AEM-eq4}, using Assumption \ref{N1} and the fact that $\mathbb{E}\mathbb{W}_1^2(\widetilde{\mu}_{\underline\tau^H}^{\widetilde X,N,H},\delta_0)\le \mathbb{E}|\widetilde X_{\underline\tau^H}^{i,N,H}|^2$, one can deduce that
\begin{align*}%\label{}
\mathbb{E}|\widetilde X_{t\wedge \tau_R}^{i,N,H}|^2\le&\mathbb{E}|\widetilde X_0^{i,N}|^2+Ct+C\int_0^t \mathbb{E}|\widetilde X_{\underline\tau^H}^{i,N,H}|^2{\rm d}s.
\end{align*}
Moreover, using equation \eqref{AEM-cont-H}, the isometry property of stochastic integrals and \eqref{AEM-eq2}, it holds that
\begin{align*}%\label{}
\mathbb{E}|\widetilde X_{\tau}^{i,N,H}-\widetilde X_{\underline\tau^H}^{i,N,H}|^2\le& C\mathbb{E}
|b_H(\widetilde X_{\underline\tau^H}^{i,N,H},\widetilde{\mu}_{\underline\tau^H}^{\widetilde X, N,H})|^2(\tau-\underline\tau^H)^2+C\mathbb{E}[\sigma_0^2|B_\tau^i-B_{\underline\tau^H}^i|^2]\notag\\
&+C\mathbb{E}[|\sigma(\widetilde X_{\underline\tau^H}^{i,N,H},\widetilde{\mu}_{\underline\tau^H}^{\widetilde X,N,H})|^2|\overline B_\tau^i-\overline B_{\underline\tau^H}^i|^2]+C\mathbb{E}[\sigma_1^2|W_\tau-W_{\underline\tau^H}|^2]\notag\\
&+C\mathbb{E}[|\overline\sigma(\widetilde X_{\underline t^H}^{i,N,H},\widetilde{\mu}_{\underline t^H}^{\widetilde X,N,H})|^2|\overline W_\tau-\overline W_{\underline\tau^H}|^2]\notag\\
\le& C\delta,
\end{align*}
Hence,
\begin{align*}%\label{}
\mathbb{E}|\widetilde X_{t\wedge \tau_R}^{i,N,H}|^2\le&\mathbb{E}|\widetilde X_0^{i,N}|^2+Ct+C\int_0^t \left(\mathbb{E}|\widetilde X_{\tau}^{i,N,H}|^2+\mathbb{E}|\widetilde X_{\tau}^{i,N,H}-\widetilde X_{\underline\tau^H}^{i,N,H}|^2\right){\rm d}s\notag\\
\le&\mathbb{E}|\widetilde X_0^{i,N}|^2+Ct+C\int_0^t \mathbb{E}|\widetilde X_{s\wedge \tau_R}^{i,N,H}|^2{\rm d}s,
\end{align*}
which, together with Gronwall's inequality, yields that for any $T>0$,
\begin{align*}%\label{sup_E_bound}
\max_{i\in\mathbb{S}_N}\sup_{0\le t\le T}\mathbb{E}|\widetilde X_{t\wedge \tau_R}^{i,N,H}|^2\le&(\mathbb{E}|\widetilde X_0^{1,N}|^2+CT){\rm e}^{CT}=:C(\mathbb{E}|\widetilde X_0^{1,N}|^2,T).
\end{align*}
Then, with the help of the Markov inequality, we obtain that 
\begin{align*}
\mathbb{P}(\tau_R< T)\le \sum_{i=1}^N\mathbb{P}(|\widetilde X_{T\wedge \tau_R}^{i,N,H}|>R)=N\mathbb{P}(|\widetilde X_{T\wedge \tau_R}^{1,N,H}|>R)\le N\frac{\mathbb{E}|\widetilde X_{T\wedge \tau_R}^{1,N,H}|^2}{R^2}\le N \frac{C(\mathbb{E}|\widetilde X_0^{1,N}|^2,T)}{R^2},
\end{align*}
which tends to zero as $R\to \infty$. Therefore, $\tau_R\to\infty$ as $R\to \infty$. Then due to Fatou's lemma, we get 
\begin{align}\label{sup_E_bound}
\max_{i\in\mathbb{S}_N}\sup_{0\le t\le T}\mathbb{E}|\widetilde X_{t}^{i,N,H}|^2\le&C(\mathbb{E}|\widetilde X_0^{1,N}|^2,T).
\end{align}

On the other hand, for equation \eqref{Eq-AEM}, applying the Burkholder-Davis-Gundy inequality, together with  Assumption \ref{N1}, it holds that
\begin{align}\label{AEM-eq6}
\mathbb{E}\left[\sup_{0\le t\le T}|\widetilde X_t^{i,N,H}|^2\right]\le&\mathbb{E}|\widetilde X_0^{i,N}|^2+C\int_0^T \mathbb{E}\mathbb{W}_1^2(\widetilde{\mu}_{\underline t^H}^{\widetilde X,N,H},\delta_0){\rm d}t+CT+\frac{1}{2}\mathbb{E}\left[\sup_{0\le t\le T}|\widetilde X_t^{i,N,H}|^2\right]\notag\\
&+\mathbb{E}\left[\int_0^T  2|\widetilde X_t^{i,N,H}-\widetilde X_{\underline t^H}^{i,N,H}||b_H(\widetilde X_{\underline t^H}^{i,N,H},\widetilde{\mu}_{\underline t^H}^{\widetilde X, N,H})|{\rm d}t\right].
\end{align}
Using \eqref{AEM-eq5} once more, combined with $\mathbb{E}\mathbb{W}_1^2(\widetilde{\mu}_{\underline t^H}^{\widetilde X,N,H},\delta_0))\le \mathbb{E}|\widetilde X_{\underline t^H}^{i,N,H}|^2$, $\delta\in(0,1)$ and \eqref{sup_E_bound}, it gives
\begin{align}\label{AEM-eq8}
\mathbb{E}\left[\sup_{0\le t\le T}|\widetilde X_t^{i,N,H}|^2\right]\le&2\mathbb{E}|\widetilde X_0^{i,N}|^2+C\int_0^T \mathbb{E}|\widetilde X_{\underline t^H}^{i,N,H}|^2{\rm d}t+CT+C\delta^{1/2}\notag\\
\le& 2\mathbb{E}|\widetilde X_0^{i,N}|^2+CT\sup_{0\le t\le T}\mathbb{E}|\widetilde X_{t}^{i,N,H}|^2+CT\notag\\
\le& C(\mathbb{E}|\widetilde X_0^{i,N}|^2,T).
\end{align}
Note that
\begin{align*}
\{t_n\le T\}\subset& \left\{t_n\le T,~\max_{i\in\mathbb{S}_N}\sup_{t\in[0,T]}\left(|\widetilde X_t^{i,N,H}|^{2(l+1)}+\mathbb{W}_1^2(\widetilde{\mu}_t^{\widetilde X,N,H},\delta_0)\right)\le \frac{H}{2}\right\}\notag\\
&\cup\left\{\max_{i\in\mathbb{S}_N}\sup_{t\in[0,T]}\left(|\widetilde X_t^{i,N,H}|^{2(l+1)}+\mathbb{W}_1^2(\widetilde{\mu}_t^{\widetilde X,N,H},\delta_0)\right)> \frac{H}{2}\right\}\notag\\
\subset& \left\{t_n^H\le T\right\}\cup\left\{\max_{i\in\mathbb{S}_N}\sup_{t\in[0,T]}|\widetilde X_t^{i,N,H}|^{2(l+1)}> \frac{H}{4}\right\}\cup\left\{\max_{i\in\mathbb{S}_N}\sup_{t\in[0,T]}\mathbb{W}_1^2(\widetilde{\mu}_t^{\widetilde X,N,H},\delta_0)> \frac{H}{4}\right\}.
\end{align*}
Then, using $\mathbb{E}\left[\sup_{t\in[0,T]}\mathbb{W}_1^2(\widetilde{\mu}_{\underline t^H}^{\widetilde X,N,H},\delta_0)\right]\le \mathbb{E}\left[\sup_{t\in[0,T]}|\widetilde X_{\underline t^H}^{i,N,H}|^2\right]$ for any $i\in\mathbb{S}_N$, Markov's inequality and \eqref{AEM-eq8}, we get that for any $H>0$,
\begin{align*}
\mathbb{P}(t_n\le T)\le& \mathbb{P}(t_n^H\le T)+\left(\frac{4}{H}\right)^{1/(l+1)}\sum_{i=1}^N\mathbb{E}\left[\sup_{t\in[0,T]}|\widetilde X_{\underline t^H}^{i,N,H}|^2\right]+\frac{4}{H}\sum_{i=1}^N\mathbb{E}\left[\sup_{t\in[0,T]}|\widetilde X_{\underline t^H}^{i,N,H}|^2\right]\notag\\
\le&\mathbb{P}(t_n^H\le T)+\left(\left(\frac{4}{H}\right)^{1/(l+1)}+\frac{4}{H}\right)NC(\mathbb{E}|X_0^i|^2,T).
\end{align*}
Then, let $n\to \infty$ and recall that $\lim_{n\to\infty} t_n^H=\infty ~a.s.,$ we have for any $H>0$,
\begin{align*}
\limsup_{n\to\infty}\mathbb{P}(t_n\le T)\le \left(\left(\frac{4}{H}\right)^{1/(l+1)}+\frac{4}{H}\right)NC(\mathbb{E}|\widetilde X_0^{i,N}|^2,T).
\end{align*}
Then, letting $H\to \infty$, we get $\lim_{n\to\infty}\mathbb{P}(t_n\le T)=0$. Therefore, $t_n\to\infty$ in probability as $n\to\infty$. Since $(t_n)_{n\ge 0}$ is an increasing sequence, we have $\lim_{n\to\infty}t_n\to\infty~ a.s.$ Thus, the result follows. 
\end{proof}

\begin{lemma}\label{lemma-AEM-boundedness}
Suppose that Assumptions \ref{A1}, \ref{S1} and \ref{N1} hold with $\lambda_1>2\lambda_2$, for any $p>0$, $i\in\mathbb{S}_N$ and $\delta\in(0,\frac{1}{2(\lambda_1-2\lambda_2)})$, there is a constant $C>0$ such that 
\begin{align}\label{AEM-boundedness}
\mathbb{E}|\widetilde X_t^{i,N}|^p\le \mathbb{E}|\widetilde X_0^{i,N}|^p+C, ~t\ge 0,
\end{align}
in case of $\mathbb{E}|\widetilde X_0^{i,N}|^p<\infty$.
\end{lemma}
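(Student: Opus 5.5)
We prove Lemma \ref{lemma-AEM-boundedness} by Itô's formula for $\mathrm{e}^{\gamma t}|\widetilde X_t^{i,N}|^p$ applied to the continuous interpolant \eqref{AEM-cont}. This is legitimate on every finite horizon because the previous lemma gives $t_n\to\infty$ almost surely. It suffices to treat $p\ge 2$ (in fact even integers $p=2k$); smaller $p$ follow by H\"older's inequality and $(a+b)^r\le a^r+b^r$, exactly as in the proofs of Lemmas \ref{boundedness of BEM} and \ref{TEM-lemma1}. We use a localization $\tau_R$ as in the previous proof and remove it at the end by Fatou's lemma.

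\textbf{Step 1 (one-sided bound with frozen drift).} We split the drift pairing as
\[
\<\widetilde X_s^{i,N}, b(\widetilde X_{\underline s}^{i,N},\widetilde\mu_{\underline s}^{\widetilde X,N})\>
=\<\widetilde X_{\underline s}^{i,N}, b(\cdot)\>+\<\widetilde X_s^{i,N}-\widetilde X_{\underline s}^{i,N}, b(\cdot)\>.
\]
The first term is controlled by \eqref{B1}:
\[
\<\widetilde X_{\underline s}^{i,N}, b(\cdot)\>\le -K_1|\widetilde X_{\underline s}^{i,N}|^2+\tfrac{\lambda_2}{2}\mathbb{W}_1^2(\widetilde\mu_{\underline s}^{\widetilde X,N},\delta_0)+K_2.
\]
For the second term we use the structure of the step size. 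Since $h_n\le \delta h(\widetilde X_{t_n}^{i,N},\cdot)$ for every $i$, we have $|b|^2(s-\underline s)\le \delta$. By Assumption \ref{N1} together with the isometry,
\[
\mathbb{E}\bigl[\,|\widetilde X_s^{i,N}-\widetilde X_{\underline s}^{i,N}|\,|b|\,\big|\,\mathcal F_{\underline s}\bigr]\le C\sqrt\delta ,
\]
exactly as in \eqref{AEM-eq5}. For $p>2$ this term appears multiplied by $|\widetilde X_s^{i,N}|^{p-2}$. To handle that factor we condition on $\mathcal F_{\underline s}$ and write $|\widetilde X_s^{i,N}|^{p-2}\le C(|\widetilde X_{\underline s}^{i,N}|^{p-2}+|\widetilde X_s^{i,N}-\widetilde X_{\underline s}^{i,N}|^{p-2})$. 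The increment moments are bounded using $|b|h_n^{1/2}\le\sqrt\delta$ and the boundedness of $\sigma,\overline\sigma$.

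\textbf{Step 2 (returning from $\underline s$ to $s$).} Next we replace $-K_1|\widetilde X_{\underline s}^{i,N}|^2|\widetilde X_s^{i,N}|^{p-2}$ by $-K_1|\widetilde X_s^{i,N}|^p$. The error involves $|\widetilde X_s^{i,N}|^{p-1}|\widetilde X_s^{i,N}-\widetilde X_{\underline s}^{i,N}|$ and similar terms, and by Young's inequality these cost $\epsilon|\widetilde X_s^{i,N}|^p+C_\epsilon\,\mathbb{E}[|\widetilde X_s^{i,N}-\widetilde X_{\underline s}^{i,N}|^p]$. The increment moment is bounded by $C(1+\delta^{p/2})$; here the key point is again that $|b|(s-\underline s)\le \sqrt{\delta(s-\underline s)}\le\sqrt\delta$. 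The measure term is handled by exchangeability: $\mathbb{E}\,\mathbb{W}_1^p(\widetilde\mu_{\underline s}^{\widetilde X,N},\delta_0)\le \mathbb{E}|\widetilde X_{\underline s}^{i,N}|^p$, which is compared with $|\widetilde X_s^{i,N}|^p$ in the same way. The diffusion terms contribute $\frac{p(p-1)}2|\widetilde X_s^{i,N}|^{p-2}(d\sigma_0^2+d\sigma_1^2+2\hat L^2)$, which is absorbed by Young's inequality into $\epsilon|\widetilde X_s^{i,N}|^p+C$. Collecting everything gives
\[
\frac{\mathrm d}{\mathrm dt}\,\mathbb{E}|\widetilde X_t^{i,N}|^p\le -p\bigl(K_1-\tfrac{\lambda_2}{2}-C\delta-\epsilon\bigr)\mathbb{E}|\widetilde X_t^{i,N}|^p+C,
\]
up to terms involving $\mathbb{E}|\widetilde X_{\underline t}^{i,N}|^p$ with a small coefficient. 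We note that $K_1-\lambda_2/2=\lambda_1-2\lambda_2>0$, and the restriction $\delta<\frac1{2(\lambda_1-2\lambda_2)}$ is what keeps the $\delta$-dependent error below half of this rate.

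\textbf{Step 3 (closing the estimate).} The main obstacle is that the right-hand side mixes $\mathbb{E}|\widetilde X_t^{i,N}|^p$ with $\mathbb{E}|\widetilde X_{\underline t}^{i,N}|^p$, and $\underline t$ is random, so one cannot simply shift in time. We plan to avoid this by working with $\sup_{u\le t}\mathbb{E}|\widetilde X_u^{i,N}|^p$ inside the Gronwall argument, using that $\mathbb{E}|\widetilde X_{\underline t}^{i,N}|^p\le \sup_{u\le t}\mathbb{E}|\widetilde X_u^{i,N}|^p$ (justified by conditioning on the step grid). We then argue by comparison: set $m(t)=\sup_{u\le t}\mathrm{e}^{\gamma u}\mathbb{E}|\widetilde X_u^{i,N}|^p$ with $\gamma=\frac p2(\lambda_1-2\lambda_2)$. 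This yields $\mathbb{E}|\widetilde X_t^{i,N}|^p\le \mathrm{e}^{-\gamma t}\mathbb{E}|\widetilde X_0^{i,N}|^p+C$, and hence \eqref{AEM-boundedness}. Finally, $\tau_R\to\infty$ and Fatou's lemma remove the localization.
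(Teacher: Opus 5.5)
Your strategy is sound and takes a genuinely different route from the paper's. The paper never forms a differential inequality. It applies It\^o's formula only on the current step $[\underline t,t]$ and bounds $|\widetilde X_t^{i,N}|^2$ by $(1-2K_1\delta)|\widetilde X_{\underline t}^{i,N}|^2+\lambda_2\delta\,\mathbb{W}_1^2(\widetilde\mu_{\underline t}^{\widetilde X,N},\delta_0)+\overline K\delta$, plus the drift-increment integral and a martingale. It raises this to the power $p$ by the binomial theorem and takes expectations given $\mathcal F_0$. This gives a one-step contraction $\mathbb{E}(|\widetilde X_t^{i,N}|^{2p}|\mathcal F_0)\le(1-(\lambda_1-2\lambda_2)\delta)\,\mathbb{E}(|\widetilde X_{\underline t}^{i,N}|^{2p}|\mathcal F_0)+C\delta$, which is then iterated, following the template of Lemma \ref{boundedness of BEM}.

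Your continuous-time Lyapunov argument for $\mathrm{e}^{\gamma t}|\widetilde X_t^{i,N}|^p$ is better suited to a random, state-dependent grid. The dissipation $-pK_1|\widetilde X_s^{i,N}|^p\,\mathrm{d}s$ accrues in real time. The grid enters only through $\widetilde X_s^{i,N}-\widetilde X_{\underline s}^{i,N}$, whose moments are bounded uniformly in $s$ because $|b|h_n\le\delta/2$ and $|b|^2h_n\le\delta$. The step-wise recursion instead needs a contraction on every step, but the elapsed time $t-\underline t$ is at most the current $h_n$, which can be far smaller than $\delta$. Writing $2\delta(-K_1|\widetilde X_{\underline t}^{i,N}|^2+\cdots)$ in place of $2(t-\underline t)(-K_1|\widetilde X_{\underline t}^{i,N}|^2+\cdots)$ goes the wrong way for the negative term. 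The iteration over a random number of random-length steps is also only sketched there.

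Three parts of your plan should be changed.

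\emph{Step 3.} As written it does not hold up. The bound $\mathbb{E}|\widetilde X_{\underline t}^{i,N}|^p\le\sup_{u\le t}\mathbb{E}|\widetilde X_u^{i,N}|^p$ does not follow from ``conditioning on the step grid'': the grid is built from the particle states and is not independent of them. For a path-dependent random time $\tau\le t$ no such inequality holds in general.

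The step is also unnecessary. Your own increment bound gives $\mathbb{E}|\widetilde X_{\underline t}^{i,N}|^p\le(1+\epsilon)\mathbb{E}|\widetilde X_t^{i,N}|^p+C_\epsilon$. So every $\mathbb{E}|\widetilde X_{\underline t}^{i,N}|^p$ term converts into $\mathbb{E}|\widetilde X_t^{i,N}|^p$, including the one exchangeability produces from $\mathbb{W}_1^p(\widetilde\mu_{\underline t}^{\widetilde X,N},\delta_0)$. You then get $\frac{\mathrm d}{\mathrm dt}\mathbb{E}|\widetilde X_t^{i,N}|^p\le-(p(\lambda_1-2\lambda_2)-\epsilon')\mathbb{E}|\widetilde X_t^{i,N}|^p+C$, which ordinary Gronwall closes.

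\emph{The role of $\delta$.} Your claim that $\delta<\frac1{2(\lambda_1-2\lambda_2)}$ keeps a $C\delta$ loss below half the rate is unfounded, since that condition does not control $C$. It is also not needed: in your argument every $\delta$-dependent error is absorbed by Young's inequality into $\epsilon\,\mathbb{E}|\widetilde X_t^{i,N}|^p+C_\epsilon$. No smallness of $\delta$ enters the rate.

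\emph{The case $p<2$.} The reduction ``exactly as in Lemmas \ref{boundedness of BEM} and \ref{TEM-lemma1}'' is done conditionally on $\mathcal F_0$. Under that conditioning the particles are no longer exchangeable, so your measure-term step would not survive. Done unconditionally, H\"older yields $(\mathbb{E}|\widetilde X_0^{i,N}|^2+C)^{p/2}$ and requires a finite second initial moment.
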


\begin{proof}
By tracing the proof of Lemma \ref{boundedness of BEM}, it is sufficient to show that, for any integer $p\ge 1$, there exist a constant $C>0$ such that 
\begin{align*}
\mathbb{E}\left(|\widetilde X_t^{i,N}|^{2p}\vert\mathcal{F}_0\right)\le \mathbb{E}\left(|\widetilde X_0^{i,N}|^{2p}\vert\mathcal{F}_0\right)+C, ~t\ge 0
\end{align*}
for the sake of validity of \eqref{AEM-boundedness}.

For any $t>0$, recall that $\underline t=\max\{t_n:t_n\le t\}$, applying It\^o's formula, for any integer $p\ge 1$, it follows from \eqref{AEM-cont}, \eqref{B1} and Assumption \ref{N1} that
\begin{align*}
|\widetilde X_t^{i,N}|^{2}=& |\widetilde X_{\underline{t}}^{i,N}|^2+\int_{\underline{t}}^t 2\left\<\widetilde X_{\underline s}^{i,N},b(\widetilde X_{\underline s}^{i,N},\widetilde{\mu}_{\underline s}^{\widetilde X, N})\right\>{\rm d}s+\int_{\underline{t}}^t 2\left\<\widetilde X_s^{i,N}-\widetilde X_{\underline s}^{i,N},b(\widetilde X_{\underline s}^{i,N},\widetilde{\mu}_{\underline s}^{\widetilde X, N})\right\>{\rm d}s\notag\\
&+\int_{\underline{t}}^t \left(\sigma_0^2d+\sigma_1^2d+|\sigma(\widetilde X_{\underline t}^{i,N},\widetilde{\mu}_{\underline t}^{\widetilde X,N})|^2+|\overline\sigma(\widetilde X_{\underline t}^{i,N},\widetilde{\mu}_{\underline t}^{\widetilde X,N})|^2\right){\rm d}s+\widetilde M_t\notag\\
\le &|\widetilde X_{\underline{t}}^{i,N}|^2+2\delta\left(-K_1|\widetilde X_{\underline t}^{i,N}|^2+\frac{\lambda_2}{2}\mathbb{W}_1^2(\widetilde{\mu}_{\underline t}^{\widetilde X, N},\delta_0)+K_2+\sigma_0^2d+\sigma_1^2d+2\hat L^2\right)\notag\\
&+\int_{\underline{t}}^t 2\left\<\widetilde X_s^{i,N}-\widetilde X_{\underline s}^{i,N},b(\widetilde X_{\underline s}^{i,N},\widetilde{\mu}_{\underline s}^{\widetilde X, N})\right\>{\rm d}s+\widetilde M_t,
\end{align*}
where $K_1, K_2$ was defined in Remark \ref{Remark-1.1}, 
\begin{align*}
\widetilde M_t:=2\int_{\underline{t}}^t \left\<\widetilde X_s^{i,N}, \sigma_0{\rm d}B_s^i+\sigma(\widetilde X_{\underline s}^{i,N},\widetilde{\mu}_{\underline s}^{\widetilde X,N}){\rm d}\overline B_s^i+\sigma_1{\rm d}W_s+\overline\sigma(\widetilde X_{\underline s}^{i,N},\widetilde{\mu}_{\underline s}^{\widetilde X,N}){\rm d}\overline{W}_s\right\>.
\end{align*}
For any integer $p\ge 1$, using the binomial theorem, we have
\begin{align}\label{AEM-tilldeX}
|\widetilde X_t^{i,N}|^{2p}\le&\Bigg\{(1-2K_1\delta)|\widetilde X_{\underline{t}}^{i,N}|^2+\lambda_2\delta\mathbb{W}_1^2(\widetilde{\mu}_{\underline t}^{\widetilde X, N},\delta_0)+2(K_2+\sigma_0^2d+\sigma_1^2d+2\hat L^2)\delta\notag\\
&\quad+\int_{\underline{t}}^t 2\left\<\widetilde X_s^{i,N}-\widetilde X_{\underline s}^{i,N},b(\widetilde X_{\underline s}^{i,N},\widetilde{\mu}_{\underline s}^{\widetilde X, N})\right\>{\rm d}s+\widetilde M_t\Bigg\}^{p}\notag\\
=&\Bigg\{(1-2K_1\delta)|\widetilde X_{\underline{t}}^{i,N}|^2+\lambda_2\delta\mathbb{W}_1^2(\widetilde{\mu}_{\underline t}^{\widetilde X, N},\delta_0)\Bigg\}^{p}\notag\\
&\quad+p\Bigg\{(1-2K_1\delta)|\widetilde X_{\underline{t}}^{i,N}|^2+\lambda_2\delta\mathbb{W}_1^2(\widetilde{\mu}_{\underline t}^{\widetilde X, N},\delta_0)\Bigg\}^{p-1}\notag\\
&\qquad\times\left(\overline K\delta+\int_{\underline{t}}^t 2\left\<\widetilde X_s^{i,N}-\widetilde X_{\underline s}^{i,N},b(\widetilde X_{\underline s}^{i,N},\widetilde{\mu}_{\underline s}^{\widetilde X, N})\right\>{\rm d}s+\widetilde M_t\right)\notag\\
&\quad+\sum_{k=0}^{p-2}C_{p}^k\Bigg\{(1-2K_1\delta)|\widetilde X_{\underline{t}}^{i,N}|^2+\lambda_2\delta\mathbb{W}_1^2(\widetilde{\mu}_{\underline t}^{\widetilde X, N},\delta_0)\Bigg\}^{k}\notag\\
&\qquad\times\left(\overline K\delta+\int_{\underline{t}}^t 2\left\<\widetilde X_s^{i,N}-\widetilde X_{\underline s}^{i,N},b(\widetilde X_{\underline s}^{i,N},\widetilde{\mu}_{\underline s}^{\widetilde X, N})\right\>{\rm d}s+\widetilde M_t\right)^{p-k}\notag\\
=:&\Phi_i({\bf X}_{\underline t}^{N,\delta})+\widetilde\Phi_i({\bf X}_{\underline t}^{N,\delta})+\overline\Phi_i({\bf X}_{\underline t}^{N,\delta}),
\end{align}
where $\overline K=2(K_2+\sigma_0^2d+\sigma_1^2d+2\hat L^2)$. By means of the binomial theorem and the Young inequality, since $\widetilde X_{\underline{t}}^{1,N},\dots, \widetilde X_{\underline{t}}^{N,N}$ are identical distributed, $1-2(\lambda_1-2\lambda_2)\delta\in(0,1)$, we deduce that
\begin{align}\label{AEM-Phi}
\mathbb{E}\left(\Phi_i({\bf X}_{\underline t}^{N,\delta})\big\vert \mathcal{F}_0\right)=&\mathbb{E}\left(\sum_{k=0}^{p}C_{p}^k(1-2K_1\delta)^k|\widetilde X_{\underline{t}}^{i,N}|^{2k}(\lambda_2\delta)^{p-k}\mathbb{W}_1^{2(p-k)}(\widetilde{\mu}_{\underline t}^{\widetilde X, N},\delta_0)\big\vert \mathcal{F}_0\right)\notag\\
\le &\left(1-2(\lambda_1-2\lambda_2)\delta\right)^{2p}\mathbb{E}\left(|\widetilde X_{\underline{t}}^{i,N}|^{2p}\big\vert \mathcal{F}_0\right)\notag\\
\le &\left(1-2(\lambda_1-2\lambda_2)\delta\right)\mathbb{E}\left(|\widetilde X_{\underline{t}}^{i,N}|^{2p}\big\vert \mathcal{F}_0\right).
\end{align}
On the other hand, 
\begin{align*}%\label{AEM-tildePhi}
\mathbb{E}\left(\widetilde\Phi_i({\bf X}_{\underline t}^{N,\delta})\big\vert \mathcal{F}_0\right)=&p\mathbb{E}\Bigg(\Bigg\{(1-2K_1\delta)|\widetilde X_{\underline{t}}^{i,N}|^2+\lambda_2\delta\mathbb{W}_1^2(\widetilde{\mu}_{\underline t}^{\widetilde X, N},\delta_0)\Bigg\}^{p-1}\overline K\delta\Bigg\vert \mathcal{F}_0\Bigg)\notag\\
&+p\mathbb{E}\Bigg(\Bigg\{(1-2K_1\delta)|\widetilde X_{\underline{t}}^{i,N}|^2+\lambda_2\delta\mathbb{W}_1^2(\widetilde{\mu}_{\underline t}^{\widetilde X, N},\delta_0)\Bigg\}^{p-1}\notag\\
&\quad\qquad\times\int_{\underline{t}}^t 2\left\<\widetilde X_s^{i,N}-\widetilde X_{\underline s}^{i,N},b(\widetilde X_{\underline s}^{i,N},\widetilde{\mu}_{\underline s}^{\widetilde X, N})\right\>{\rm d}s\Bigg\vert \mathcal{F}_0\Bigg)\notag\\
&+p\mathbb{E}\Bigg(\Bigg\{(1-2K_1\delta)|\widetilde X_{\underline{t}}^{i,N}|^2+\lambda_2\delta\mathbb{W}_1^2(\widetilde{\mu}_{\underline t}^{\widetilde X, N},\delta_0)\Bigg\}^{p-1}\widetilde M_t\Bigg\vert \mathcal{F}_0\Bigg)\notag\\
=:&\widetilde\Phi_i^1({\bf X}_{\underline t}^{N,\delta})+\widetilde\Phi_i^2({\bf X}_{\underline t}^{N,\delta})+\widetilde\Phi_i^3({\bf X}_{\underline t}^{N,\delta}).
\end{align*}
By the Young inequality, one can arrive at
\begin{align*}%\label{AEM-tildePhi-1}
\widetilde\Phi_i^1({\bf X}_{\underline t}^{N,\delta})\le \frac{1}{4}(\lambda_1-2\lambda_2)\delta\mathbb{E}\left(|\widetilde X_{\underline{t}}^{i,N}|^{2p}\big\vert \mathcal{F}_0\right)+C\delta.
\end{align*}
According to Assumption \ref{N1} and the definition of the step-size, using the Young inequality once more, it holds that
\begin{align*}%\label{AEM-tildePhi-2}
\widetilde\Phi_i^2({\bf X}_{\underline t}^{N,\delta})=&2p\mathbb{E}\Bigg(\Bigg\{(1-2K_1\delta)|\widetilde X_{\underline{t}}^{i,N}|^2+\lambda_2\delta\mathbb{W}_1^2(\widetilde{\mu}_{\underline t}^{\widetilde X, N},\delta_0)\Bigg\}^{p-1}\int_{\underline{t}}^t \vert b(\widetilde X_{\underline s}^{i,N},\widetilde{\mu}_{\underline s}^{\widetilde X, N})\vert^2(s-\underline s){\rm d}s\Big\vert \mathcal{F}_0\Bigg)\notag\\
&+2p\mathbb{E}\Bigg(\Bigg\{(1-2K_1\delta)|\widetilde X_{\underline{t}}^{i,N}|^2+\lambda_2\delta\mathbb{W}_1^2(\widetilde{\mu}_{\underline t}^{\widetilde X, N},\delta_0)\Bigg\}^{p-1} \sigma_0\vert b(\widetilde X_{\underline t}^{i,N},\widetilde{\mu}_{\underline t}^{\widetilde X, N})\vert\int_{\underline{t}}^t |B_s^i-B_{\underline s}^i|{\rm d}s\Bigg\vert \mathcal{F}_0\Bigg)\notag\\
&+2p\mathbb{E}\Bigg(\Bigg\{(1-2K_1\delta)|\widetilde X_{\underline{t}}^{i,N}|^2+\lambda_2\delta\mathbb{W}_1^2(\widetilde{\mu}_{\underline t}^{\widetilde X, N},\delta_0)\Bigg\}^{p-1}\notag\\
&\qquad\times\vert\sigma(\widetilde X_{\underline t}^{i,N},\widetilde{\mu}_{\underline t}^{\widetilde X,N})\vert\vert b(\widetilde X_{\underline t}^{i,N},\widetilde{\mu}_{\underline t}^{\widetilde X, N})\vert\int_{\underline{t}}^t |\overline B_s^i-\overline B_{\underline s}^i|{\rm d}s\Bigg\vert \mathcal{F}_0\Bigg)\notag\\
&+2p\mathbb{E}\Bigg(\Bigg\{(1-2K_1\delta)|\widetilde X_{\underline{t}}^{i,N}|^2+\lambda_2\delta\mathbb{W}_1^2(\widetilde{\mu}_{\underline t}^{\widetilde X, N},\delta_0)\Bigg\}^{p-1} \sigma_1\vert b(\widetilde X_{\underline t}^{i,N},\widetilde{\mu}_{\underline t}^{\widetilde X, N})\vert\int_{\underline{t}}^t |W_s-W_{\underline s}|{\rm d}s\Bigg\vert \mathcal{F}_0\Bigg)\notag\\
&+2p\mathbb{E}\Bigg(\Bigg\{(1-2K_1\delta)|\widetilde X_{\underline{t}}^{i,N}|^2+\lambda_2\delta\mathbb{W}_1^2(\widetilde{\mu}_{\underline t}^{\widetilde X, N},\delta_0)\Bigg\}^{p-1}\notag\\
&\qquad\times\vert\overline\sigma(\widetilde X_{\underline t}^{i,N},\widetilde{\mu}_{\underline t}^{\widetilde X,N})\vert \vert b(\widetilde X_{\underline t}^{i,N},\widetilde{\mu}_{\underline t}^{\widetilde X, N})\vert\int_{\underline{t}}^t |\overline{W}_s-\overline{W}_{\underline s}|{\rm d}s\Big\vert \mathcal{F}_0\Bigg)\notag\\
\le &C\delta\mathbb{E}\Bigg(\Bigg\{(1-2K_1\delta)|\widetilde X_{\underline{t}}^{i,N}|^2+\lambda_2\delta\mathbb{W}_1^2(\widetilde{\mu}_{\underline t}^{\widetilde X, N},\delta_0)\Bigg\}^{p-1}\Big\vert \mathcal{F}_0\Bigg)\notag\\
\le &\frac{1}{4}(\lambda_1-2\lambda_2)\delta\mathbb{E}\left(|\widetilde X_{\underline{t}}^{i,N}|^{2p}\big\vert \mathcal{F}_0\right)+C\delta.
\end{align*}
 Note that $(\widetilde M_t)_{t\ge 0}$ is a martingale, then $\widetilde\Phi_i^3({\bf X}_{\underline t}^{N,\delta})=0$. Hence
\begin{align}\label{AEM-tildePhi-0}
\mathbb{E}\left(\widetilde\Phi_i({\bf X}_{\underline t}^{N,\delta})\big\vert \mathcal{F}_0\right)\le \frac{1}{2}(\lambda_1-2\lambda_2)\delta\mathbb{E}\left(|\widetilde X_{\underline{t}}^{i,N}|^{2p}\big\vert \mathcal{F}_0\right)+C\delta.
\end{align}
Finally, together with the Young inequality and the fact that the conditional expectation (given $\mathcal{F}_{\underline t}$) of the increments $|B_t^i-B_{\underline t}^i|^{p-k}$, $|\overline B_t^i-\overline B_{\underline t}^i|^{p-k}$, $|W_t-W_{\underline t}^1|^{p-k}$, and $|\overline{W}_t-W_{\underline t}^2|^{p-k}$ contribute at least the order $\delta$ for $k\le p-2$, leads to
\begin{align}\label{AEM-barPhi}
\mathbb{E}\left(\overline\Phi_i({\bf X}_{\underline t}^{N,\delta})\big\vert \mathcal{F}_0\right)\le \frac{1}{2}(\lambda_1-2\lambda_2)\delta\mathbb{E}\left(|\widetilde X_{\underline{t}}^{i,N}|^{2p}\big\vert \mathcal{F}_0\right)+C\delta.
\end{align}
Substituting \eqref{AEM-Phi}, \eqref{AEM-tildePhi-0} and \eqref{AEM-barPhi} into \eqref{AEM-tilldeX}, based on the continuity, via an inductive argument, since $1-(\lambda_1-2\lambda_2)\delta\in (0,1)$, we arrive at
\begin{align*}%\label{AEM-tilldeX}
\mathbb{E}\left(|\widetilde X_t^{i,N}|^{2p}\vert \mathcal{F}_0\right)\le& \left(1-(\lambda_1-2\lambda_2)\delta\right)\mathbb{E}\left(|\widetilde X_{\underline{t}}^{i,N}|^{2p}\big\vert \mathcal{F}_0\right)+C\delta\notag\\
\le& \left(1-(\lambda_1-2\lambda_2)\delta\right)\mathbb{E}\left(|\widetilde X_0^{i,N}|^{2p}\big\vert \mathcal{F}_0\right)+C\notag\\
\le& \mathbb{E}\left(|\widetilde X_0^{i,N}|^{2p}\big\vert \mathcal{F}_0\right)+C.
\end{align*}

\end{proof}

%\begin{lemma}\label{AEM-lemma2}
%Under Assumptions \ref{A1}, \ref{A12}, \ref{S1}, \ref{N1}, for any $i\in\mathbb{S}_N$ and $\delta\in(0,1)$, there is a constant $C>0$ such that 
%\begin{align*}
%\mathbb{E}|b(\widetilde X_t^{i,N},\widetilde{\mu}_t^{\widetilde X,N})-b(\widetilde X_{\underline t}^{i,N},\widetilde{\mu}_{\underline t}^{\widetilde X,N})|\le C\left(1+\mathbb{E}|\widetilde X_0^{i,N}|^{2l}\right)\delta^{1/2},
%\end{align*}
%in case of $\mathbb{E}|\widetilde X_0^{i,N}|^{2l}<\infty$.
%\end{lemma}

Finally, the uniform-in-time error estimate for the adaptive EM method can be obtained follows Theorem \ref{main theorem} and Lemma \ref{lemma-AEM-boundedness}.
\begin{theorem}\label{adaptive-error-bound}
Let $\sigma_0\sigma_1\neq 0$. Suppose that Assumptions \ref{A1}, \ref{A2}, \ref{S1} and \ref{N1} hold with $\lambda_1>2\lambda_2$. Then there exists $C>0$ such that for any $\lambda_2\in [0,\lambda_2^*)$, $L\in[0,L^*]$, $\delta\in(0,\frac{1}{2(\lambda_1-2\lambda_2)})$, $t>0$ and $i\in\mathbb{S}_N$,
\begin{align*}
\mathcal{W}_1\big(\mathcal{L}(\mu_t^i),\mathcal{L}(\widetilde\nu_t^{i,N})\big)\le C\left\{{\rm e}^{-\lambda_0t}\mathbb{W}_1(\mu,\nu)+\psi(N)+\left(1+\mathbb{E}|\widetilde X_0^{i,N}|^{2l}\right)\delta^{1/2}\right\}
\end{align*}
in the case of $\mathbb{E}|\widetilde X_0^i|^q<\infty$ for some $q>1$ and $\mathbb{E}|\widetilde X_0^{i,N}|^{2l}<\infty$, where $\mu_t^i=\mathcal{L}(X_t^i\vert \mathcal{F}_t^W)$ stands for the regular conditional distribution of $X_t$, determined by \eqref{NIPS}, with the initial distribution $\mathcal{L}(X_0^i)=\mu$, $\widetilde\nu_t^{i,N,h}=\mathcal{L}(\widetilde X_t^{i,N,h}\vert \mathcal{F}_t^W)$ stands for the regular conditional distribution of $\widetilde X_t^{i,N,h}$, determined by \eqref{AEM-cont}, with the initial distribution $\mathcal{L}(\widetilde X_0^{i,N,h})=\nu$, $\lambda_2^*, L^*$, and $\lambda_0$ were given in Theorem  \ref{main theorem}.
\end{theorem}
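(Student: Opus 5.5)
The plan mirrors the proofs of Theorems \ref{BEM-error-bound} and \ref{TEM-error-bound}. I will apply Theorem \ref{main theorem} with $X_t^{i,N}=\widetilde X_t^{i,N}$, $\widetilde b=b$ and $\theta_t=\overline\theta_t=\underline t$. This reduces the claim to the uniform local error bound
\begin{align*}
\sup_{s\ge 0}\mathbb{E}\big|b(\widetilde X_s^{i,N},\widetilde\mu_s^{\widetilde X,N})-b(\widetilde X_{\underline s}^{i,N},\widetilde\mu_{\underline s}^{\widetilde X,N})\big|\le C\big(1+\mathbb{E}|\widetilde X_0^{i,N}|^{2l}\big)\delta^{1/2}.
\end{align*}
Once this holds, $\int_0^t{\rm e}^{-\lambda_0(t-s)}{\rm d}s\le 1/\lambda_0$ gives the stated estimate. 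The lemma on $t_n\to\infty$ a.s. guarantees that the continuous interpolation \eqref{AEM-cont} is defined for all $t\ge0$, so Theorem \ref{main theorem} applies.

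First, I split the drift difference with Assumption \ref{S1} and the $\mathbb{W}_1$-Lipschitz part of Assumption \ref{A1}:
\begin{align*}
|b(\widetilde X_s,\widetilde\mu_s)-b(\widetilde X_{\underline s},\widetilde\mu_{\underline s})|\le K(1+|\widetilde X_s|^l+|\widetilde X_{\underline s}|^l)|\widetilde X_s-\widetilde X_{\underline s}|+\lambda_2\mathbb{W}_1(\widetilde\mu_s,\widetilde\mu_{\underline s}).
\end{align*}
Here I suppress the superscripts $i,N$. The empirical-measure term is controlled by $\frac1N\sum_j|\widetilde X_s^j-\widetilde X_{\underline s}^j|$, and exchangeability reduces it to $\mathbb{E}|\widetilde X_s^{i}-\widetilde X_{\underline s}^{i}|$.

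Second, I estimate the one-step increment $\mathbb{E}|\widetilde X_s-\widetilde X_{\underline s}|^2$. Here the adaptive step does the work. Since $s-\underline s\le h_n\le\delta h(\widetilde X_{\underline s}^i,\widetilde\mu_{\underline s})=\delta/(1+|b|^2)$, the drift part satisfies $|b|^2(s-\underline s)^2\le\delta^2$ pathwise. With no moment of $b$ required, this gives $\mathbb{E}|\widetilde X_s-\widetilde X_{\underline s}|^2\le C\delta$. The Brownian and diffusion parts are bounded by $C\delta$ via conditional It\^o isometry given $\mathcal F_{\underline s}$ together with Assumption \ref{N1}, exactly as in \eqref{AEM-eq5}. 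A subtlety is that $\underline s$ is random. Because the step size is $\mathcal F_{t_n}$-measurable, each $t_{n+1}$ is a stopping time and the strong Markov property of Brownian motion justifies the isometry on $[t_n,t_{n+1}]$.

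Third, I apply Cauchy--Schwarz to the first term:
\begin{align*}
\big(\mathbb{E}(1+|\widetilde X_s|^l+|\widetilde X_{\underline s}|^l)^2\big)^{1/2}\big(\mathbb{E}|\widetilde X_s-\widetilde X_{\underline s}|^2\big)^{1/2}\le C\big(1+\mathbb{E}|\widetilde X_0^{i,N}|^{2l}\big)^{1/2}\delta^{1/2}.
\end{align*}
This uses Lemma \ref{lemma-AEM-boundedness} with $p=2l$ to bound $\mathbb{E}|\widetilde X_s|^{2l}$ and $\mathbb{E}|\widetilde X_{\underline s}|^{2l}$ uniformly in time. Since $(1+x)^{1/2}\le 1+x$, this is dominated by $C(1+\mathbb{E}|\widetilde X_0^{i,N}|^{2l})\delta^{1/2}$.

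The main obstacle is the rigorous use of Lemma \ref{lemma-AEM-boundedness} at the random grid time $\underline s$, which is not a fixed deterministic time. I would handle it by conditioning on $\mathcal F_0$ as in that lemma's proof, since the inductive bound there is stated conditionally and holds at every $t_n$. The initial-moment condition on $\widetilde X_0^i$ (order $q$) feeds only into the $\psi(N)$ term through Lemma \ref{CPoC}.
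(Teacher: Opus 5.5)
Your proposal is correct and follows essentially the same route as the paper: apply Theorem \ref{main theorem} with $\widetilde b=b$ and $\theta_t=\overline\theta_t=\underline t$, bound $\mathbb{E}|\widetilde X_t^{i,N}-\widetilde X_{\underline t}^{i,N}|^2\le C\delta$ using $h_n\le\delta/(1+|b|^2)$, and close with Cauchy--Schwarz and Lemma \ref{lemma-AEM-boundedness}; you are also more careful than the paper about the $\lambda_2\mathbb{W}_1(\widetilde\mu_t^{\widetilde X,N},\widetilde\mu_{\underline t}^{\widetilde X,N})$ term, which the paper absorbs silently. On the random-time moment you flag, your suggested fix does not quite work, because a bound at each fixed index $n$ does not control $\mathbb{E}|\widetilde X_{\underline s}^{i,N}|^{2l}$ when the index of $\underline s$ is random; you can instead use $|\widetilde X_{\underline s}^{i,N}|^l\le C(|\widetilde X_s^{i,N}|^l+|\widetilde X_s^{i,N}-\widetilde X_{\underline s}^{i,N}|^l)$, since the increment has uniformly bounded moments (the drift part is pathwise at most $\delta/2$, and by Assumption \ref{N1} the noise parts are bounded multiples of Brownian oscillations over the deterministic window $[(s-\delta)^+,s]$), so only the deterministic-time bound of Lemma \ref{lemma-AEM-boundedness} is needed.
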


\begin{proof}
From Assumption \ref{S1}, based on the H\"older inequality, it is easy to get that
\begin{align*}
\mathbb{E}|b(\widetilde X_t^{i,N},\widetilde{\mu}_t^{\widetilde X,N})-b(\widetilde X_{\underline t}^{i,N},\widetilde{\mu}_{\underline t}^{\widetilde X,N})|\le& C\left(1+(\mathbb{E}|\widetilde X_t^{i,N}|^{2l})^{1/2}+(\mathbb{E}|\widetilde X_{\underline t}^{i,N}|^{2l})^{1/2}\right)(\mathbb{E}|\widetilde X_t^{i,N}-\widetilde X_{\underline t}^{i,N}|^2)^{1/2}.
\end{align*}
From \eqref{AEM-cont}, using the tower property of conditional expectation, together with the definition of the step-size, one has
\begin{align*}
\mathbb{E}|\widetilde X_t^{i,N}-\widetilde X_{\underline t}^{i,N}|^2\le& C\delta^2\mathbb{E}\left[\left|b(\widetilde X_{\underline t}^{i,N},\widetilde{\mu}_{\underline t}^{\widetilde X, N})\right|^2h(\widetilde X_{\underline t}^{i,N},\widetilde{\mu}_{\underline t}^{\widetilde X, N})^2\right]+C\delta\mathbb{E}\left[\left(\sigma_0^2+\left|\sigma(\widetilde X_{\underline t}^{i,N},\widetilde{\mu}_{\underline t}^{\widetilde X,N})\right|^2\right)h(\widetilde X_{\underline t}^{i,N},\widetilde{\mu}_{\underline t}^{\widetilde X, N})\right]\notag\\
&+C\delta\mathbb{E}\left[\left|\overline\sigma(\widetilde X_{\underline t}^{i,N},\widetilde{\mu}_{\underline t}^{\widetilde X,N})\right|^2h(\widetilde X_{\underline t}^{i,N},\widetilde{\mu}_{\underline t}^{\widetilde X, N})\right]\notag\\
\le& C\delta.
\end{align*}
Hence, it follows from Lemma \ref{lemma-AEM-boundedness} that
\begin{align}\label{AEM-b-b_tilde}
\mathbb{E}|b(\widetilde X_t^{i,N},\widetilde{\mu}_t^{\widetilde X,N})-b(\widetilde X_{\underline t}^{i,N},\widetilde{\mu}_{\underline t}^{\widetilde X,N})|\le& C\left(1+\mathbb{E}|\widetilde X_0^{i,N}|^{2l}\right)\delta^{1/2}.
\end{align}
Using Theorem \ref{main theorem}, take $X_t^{i,N}=\overline X_t^{i,N,h}$ satisfies \eqref{AEM-cont}, $\widetilde b=b$, $\theta_t=\overline \theta_t=t_h$, combining \eqref{AEM-b-b_tilde}, the assertion follows. 
\end{proof}

\section{Numerical example}\label{Numerical example}
\begin{example}\rm
Consider the scalar McKean-Vlasov SDEs with common noise:
\begin{align}\label{example1-eq}
{\rm d}X_t=(X_t-X_t^3+\mathbb{E}^B[X_t]){\rm d}t+(\sigma_0+0.1X_t)dB_t+(\sigma_1+\mathbb{E}^B[X_t])dW_t
\end{align}
where $X_0\sim N(0,1)$ is supported by $(\Omega^B,\mathcal{F}^B, \mathbb{P}^B)$. It is easy to verify that
\begin{align*}
\<x-y, b(x,\mu)-b(y,\mu)\>=&\<x-y, x-x^3-(y-y^3)\>\\
%=&(x-y)^2[1-(x^2+y^2+xy)]\\
=&(x-y)^2\left[1-\frac{3}{4}(x+y)^2-\frac{1}{4}(x-y)^2\right]\\
%\le &(x-y)^2\left[1-\frac{1}{4}(x-y)^2\right]\\
\le &2(x-y)^2\mathbbm{1}_{\{|x-y|\le 2\sqrt{2}\}}-(x-y)^2,
\end{align*}
hence the drift term is dissipative merely in the long distance. Moreover, we can also verify that other assumptions hold. 

To examine the long-time error, we simulate equation \eqref{example1-eq} using numerical methods. First, we fix the number of particles $N=500$ and test simulation errors of the backward Euler method(BEM), the tamed Euler method(TEM), and the adaptive Euler method(AEM). The parameters are chosen as follows:  the step size $h=0.01$ for BEM and TEM, while AEM uses a baseline step size $\delta=1$ with an adaptive scaling factor; the noise intensity $\sigma_0=0.1, \sigma_1=0.02$, total simulation time $T=40$. Reference solution is obtained by TEM with a much smaller step size $10^{-3}$. Figure  \ref{Fig.main1} shows the evolution of the $L^1$ error between the numerical solution and the reference solution in semi-logarithmic scale. All three error curves remain bounded at a low level (approximately $10^{-2}$ to $10^{-1}$) over the whole time interval. This experiment confirms Theorems  \ref{BEM-error-bound}, \ref{TEM-error-bound}, and \ref{adaptive-error-bound}: even though the drift is only dissipative in long distance, the three numerical schemes are stable in the long run and their errors do not blow up with time. Since the convergence rates of these three methods are standard and well-known, we do not display them here.

%%%-------------------------------------------------------------------------------------------------------------------------------------------------------
 \begin{figure}[H]
\centering  
\includegraphics[width=0.9\textwidth]{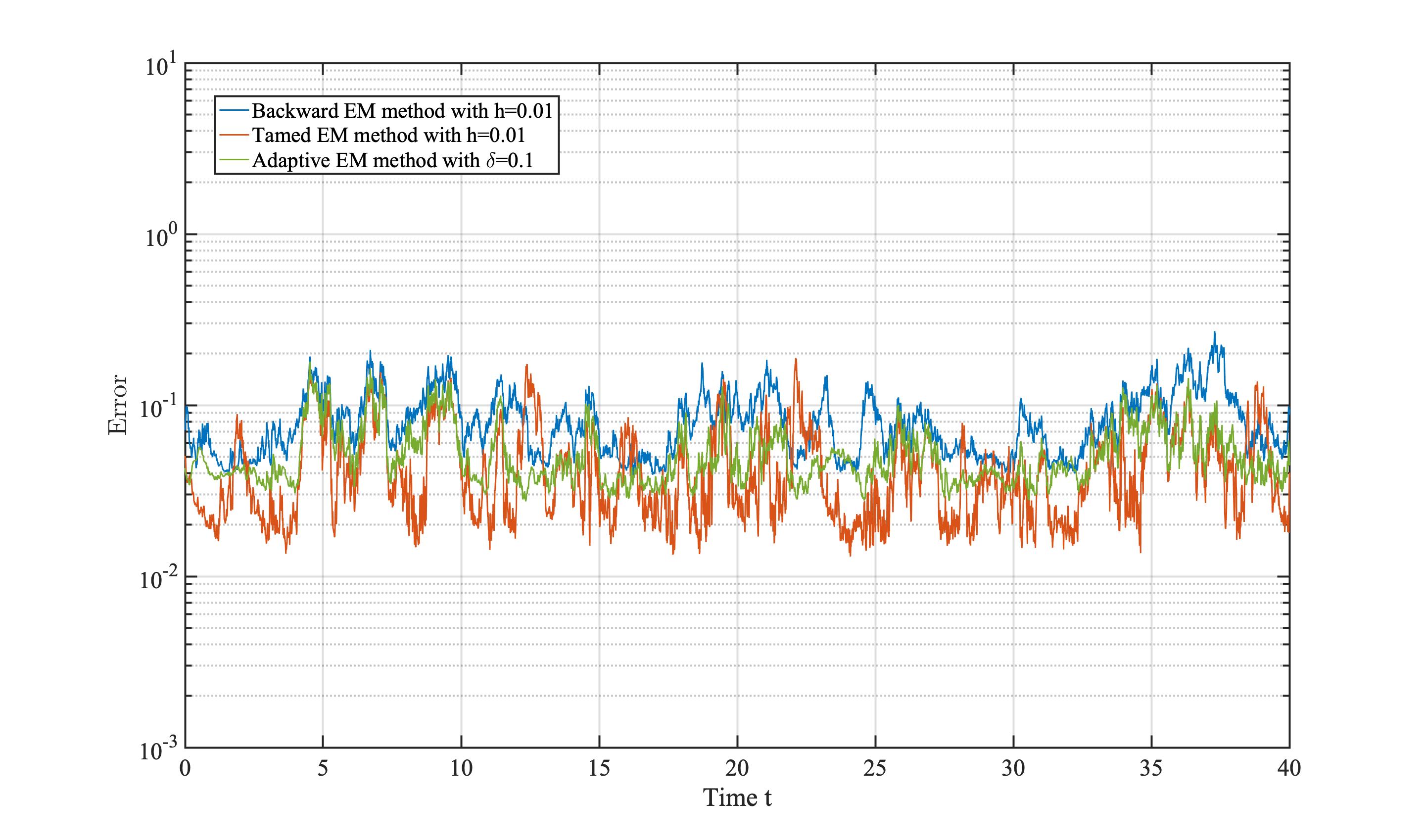}
\caption{Long-time error for the BEM, TEM, and AEM.}
\label{Fig.main1}
\end{figure}

Then we test the long-time errors for propagation of chaos. Solutions of the interacting particle systems are simulated using BEM with the step size $h=0.1$ and the numbers of particles $N=1000, 2000, 4000, 6000$. A reference solution is computed using BEM with the step size $h=0.1$ and a much larger number of particles $N_{ref}=10000$. The noise intensity $\sigma_0=0.1, \sigma_1=0.02$ and the simulation runs up to $T=40$. The $L^1$ propagation of chaos error is shown in Figure  \ref{Fig.main2}.
 \begin{figure}[H]
\centering  
\includegraphics[width=0.9\textwidth]{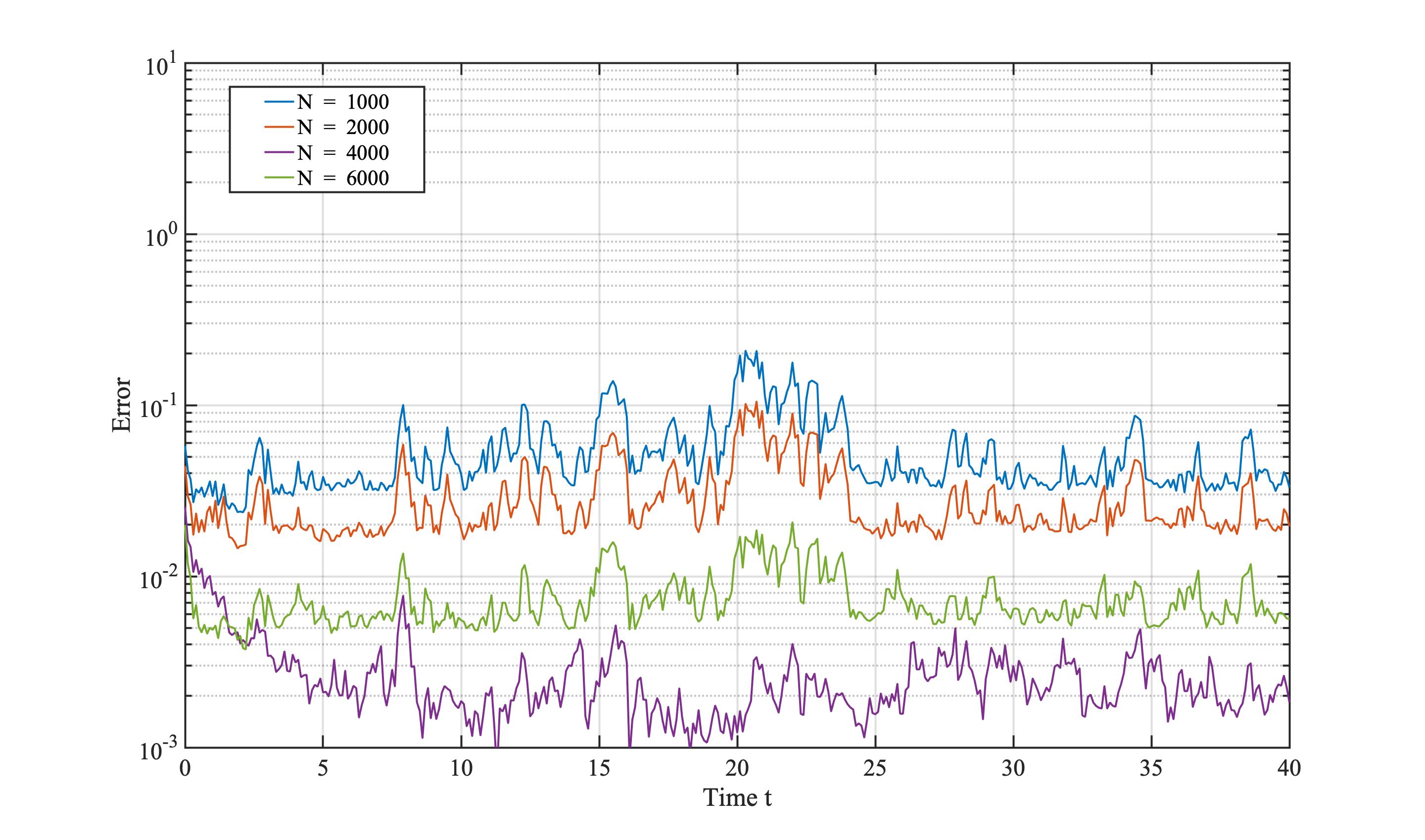}
\caption{Long-time error for propagation of chaos.}
\label{Fig.main2}
\end{figure}

Finally, we verify the exponential forgetting of initial conditions in Figure \ref{Fig.main3}. Both systems are simulated with the BEM using the step size $h=0.01$ and the number of particles $N=500$. In this test, we use moderate noise $\sigma_0=\sigma_1=2$ to make the exponential decay clearly visible within a reasonable simulation time. System  1 always starts from the standard normal distribution $N(0,1)$. System  2 starts from three different distributions: the uniform distribution $U(-1,1)$, $U(-2,2)$, and the normal distribution $N(0,4)$. 
 \begin{figure}[H]
\centering  
\includegraphics[width=0.9\textwidth]{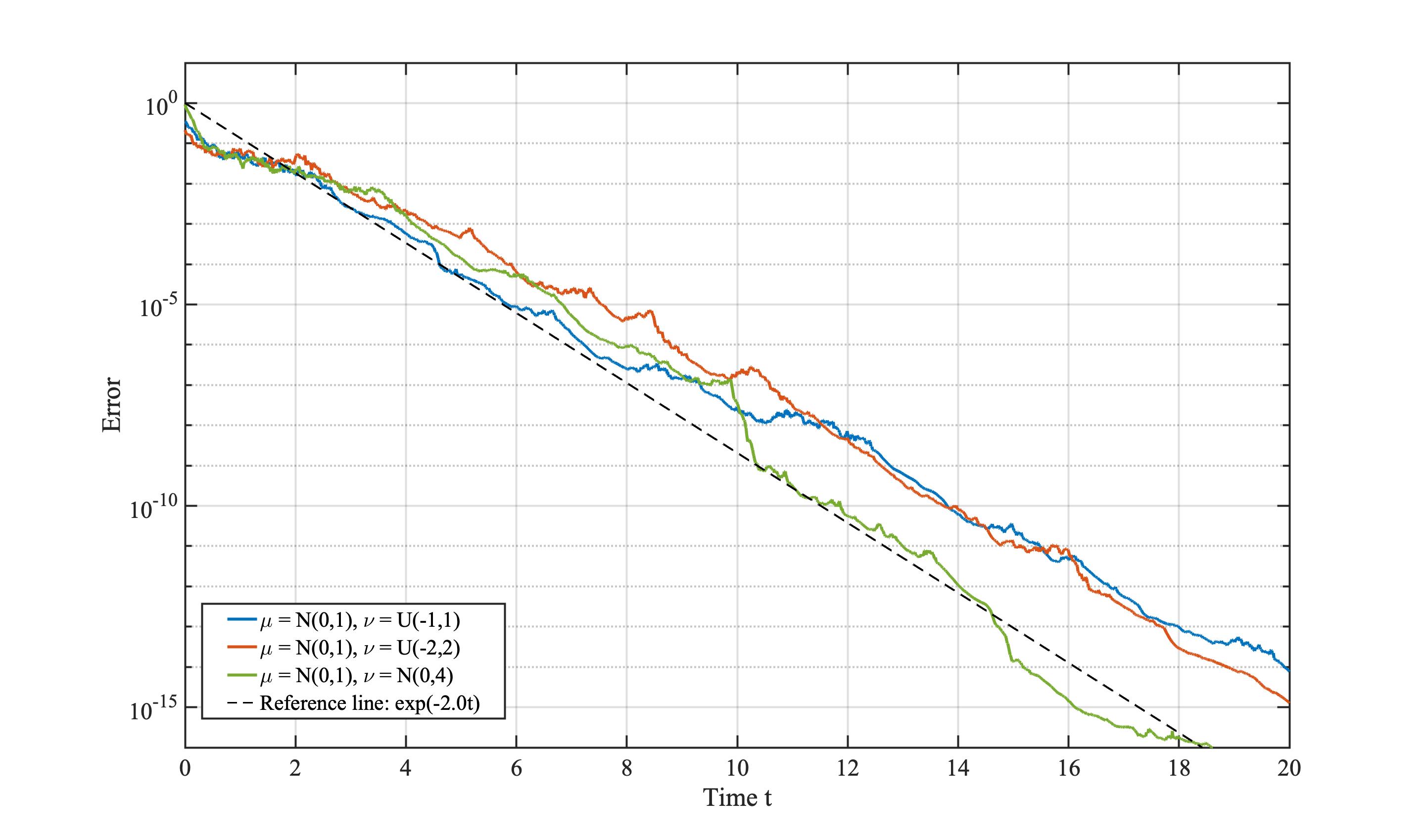}
\caption{Long-time error for different initial data.}
\label{Fig.main3}
\end{figure}

\end{example}

\section*{Acknowledgement}
This research was supported by the National Natural Science Foundation of China under Grant No. 12471372. The authors deeply grateful to Prof. Jianhai Bao for his insightful and helpful comments.

%%%%%%%%%%%%%%%%%%%%%%%%%%%%%

\end{document}